\documentclass[11pt,a4paper]{article}

\usepackage[a4paper,left=2.5cm,right=2.5cm,top=2.6cm,bottom=2.6cm]{geometry}
\usepackage{xeCJK}
\usepackage{amsmath,amssymb,amsthm,mathrsfs,mathtools}
\usepackage{bm}
\allowdisplaybreaks[4]

\usepackage{tikz-cd}
\usetikzlibrary{arrows.meta}
\usepackage{booktabs}
\usepackage{longtable}
\usepackage{array}
\usepackage{caption}
\usepackage[colorlinks,linkcolor=blue!55!black,citecolor=teal!70!black,
            urlcolor=blue!55!black,bookmarksnumbered]{hyperref}

\newtheorem{theorem}{Theorem}[section]
\newtheorem{proposition}[theorem]{Proposition}
\newtheorem{lemma}[theorem]{Lemma}
\newtheorem{corollary}[theorem]{Corollary}
\theoremstyle{definition}
\newtheorem{definition}[theorem]{Definition}
\newtheorem{example}[theorem]{Example}
\newtheorem{remark}[theorem]{Remark}

\newcommand{\K}{\mathbb{K}}

\newcommand{\R}{\mathbb{R}}
\newcommand{\C}{\mathbb{C}}
\newcommand{\N}{\mathbb{N}}

\newcommand{\cc}{\mathfrak{c}}
\newcommand{\ba}{\mathrm{ba}}

\newcommand{\VF}{\mathbf{VF}}
\newcommand{\NF}{\mathbf{NF}}
\newcommand{\BF}{\mathbf{BF}}
\newcommand{\iNF}{\mathbf{iNF}}
\newcommand{\iBF}{\mathbf{iBF}}
\newcommand{\Ban}{\mathbf{Ban}}
\newcommand{\iBan}{\mathbf{iBan}}

\DeclareMathOperator{\id}{id}
\DeclareMathOperator{\Ker}{Ker}

\DeclareMathOperator{\Span}{span}
\DeclareMathOperator{\Dir}{Dir}
\DeclareMathOperator{\Inv}{Inv}
\DeclareMathOperator{\Sh}{Sh}
\DeclareMathOperator{\Cl}{Cl}
\DeclareMathOperator{\dens}{dens}
\DeclareMathOperator{\Ob}{Ob}

\newenvironment{pquote}{\begin{quote}\small\itshape}{\end{quote}}

\title{\bfseries Basic Theory of Tower Systems
       \\[3pt] \large with a Discussion of Several Errors in Three Papers by Ugle\v{s}i\'c}
\author{Quanfeng Liu\thanks{Contact: \texttt{cfj262@outlook.com}; Independent Researcher, People's Republic of China}\\[6pt]
        \small Nikica Uglešić}
\date{September 2026}

\begin{document}
\maketitle

\begin{abstract}
\noindent For a Banach space $X$, we study the completed direct limit of its even iterated duals along the
canonical embeddings and the bounded inverse limit of the corresponding odd duals. We give explicit
constructions, establish their duality relation, and describe the comparison maps associated with
iterated dualization. We then examine several assertions in three preprints of Ugle\v{s}i\'c, giving
counterexamples concerning dimension stability and the commutation of bidualization with
direct limits. The finite quotient shape (the category whose objects are the finite-dimensional
quotients, with all continuous linear maps between them) is classified by the algebraic dimension of
the continuous dual. We also compute the dimensions of the finite iterated duals and the completed
tower of $\ell^{\infty}(\N)$, and prove that, inside the tower space, the canonical copy of a higher
layer need not be the range of an $M$-projection. Throughout, uncompleted
limits, Banach completions, and the different meanings of isomorphism are distinguished explicitly.

\medskip
\noindent\textbf{Keywords:} Banach spaces; dual spaces; reflexivity; tower systems;
direct and inverse limits; quotient shape.
\end{abstract}

\tableofcontents
\bigskip

%=====================================================================
\section{Introduction}\label{sec:intro}

\subsection{Origin of the problem}\label{sub:origin}

Let $\K$ be the field of real numbers $\R$ or the field of complex numbers $\C$. For a normed
space $X$ over $\K$, denote by $D(X)=X^{*}$ its dual space (that is, the Banach space of all
continuous linear functionals on $X$ endowed with the operator norm), and let $D^{n}(X)$ denote
the $n$-th iterated dual, $X^{(n)}=D^{n}(X)$. This paper discusses an elementary
question: \emph{what do we get if we keep taking duals of a Banach space?} 

Classical functional analysis gives two basic answers to the question.

\textbf{(1) Non-reflexivity is hereditary.} By a short argument due to Goldberg \cite{goldberg1960}
(see Theorem \ref{thm:goldberg} below), $X$ is reflexive if and only if $X^{*}$ is reflexive.
Hence, as soon as $X$ is non-reflexive, every space in the dual chain is non-reflexive, and the two
chains
\begin{equation}\label{eq:chains-intro}
  X\ \subset\ X^{**}\ \subset\ X^{(4)}\ \subset\ \cdots ,
  \qquad
  X^{*}\ \subset\ X^{(3)}\ \subset\ X^{(5)}\ \subset\ \cdots
\end{equation}
(the embeddings being the canonical embeddings and their iterates) are \emph{strictly}
increasing: they never ``stabilize'' at a reflexive space.

\textbf{(2) Shape and dimension alone do not suffice to determine the behavior of the chains.}
James \cite{james1951} constructed a non-reflexive space $J$ that is isometrically isomorphic
to its own second dual: $J\equiv J^{**}$. Thus, whether the chain
$X\subset X^{**}\subset\cdots$ is strict cannot be determined from the type, the shape or the
algebraic dimension of the individual levels alone; one has to examine \emph{how the embeddings are
chosen}.

The question then naturally becomes: what do these two strictly increasing chains ``tend to''?
In algebraic terms: what are their limits as \emph{direct systems} and \emph{inverse systems}
(see Definition \ref{def:limits})? This is precisely the subject of the present paper.

\subsection{The state of the art}\label{sub:status}

\subsubsection{From the perspective of shape theory}

Shape theory was introduced by Borsuk \cite{borsuk1968} (a systematic account is given in his
monograph \cite{borsuk1975}) in order to extend homotopy theory from
polyhedra to general compact metric spaces (and to still more general spaces). Its basic
technique is to represent a space as an \emph{inverse system} of ``nice'' spaces and then to
replace the ``homotopy type'' by the inverse limit. This method was organized into the
``inverse system approach'' by Marde\v{s}i\'c and Segal \cite{ms1982} and by
Dydak--Segal \cite{dydaksegal1978}, and was given a categorical treatment by
Cordier--Porter \cite{cordierporter1989}; Marde\v{s}i\'c \cite{mardesic1997} gives a survey of
thirty years of development.

An important lesson from shape theory is this: \emph{a ``limit''
is meaningful only when taken in the correct category}, and the choice of the
connecting maps (arbitrary continuous maps, or surjections, or isomorphisms) determines the
whole character of the theory. The discussion in Section \ref{sec:cat} of this paper is
precisely a manifestation of this lesson in the case of normed spaces.

\subsubsection{From the perspective of normed spaces and quotient shapes}

Since 2016, Ugle\v{s}i\'c has carried over the notion of the ``quotient shape'' from shape
theory to the category of vector spaces: reference \cite{uglesic2016} develops the theory in a
general concrete category, \cite{uglesic2017} treats the case of vector spaces and normed
spaces, \cite{uglesic2018t} the case of general topological spaces, and \cite{uglesic2018lp}
the case of $l_p$ and $L_p$ spaces. On this basis, the following three arXiv preprints (all
submitted in 2019)
\begin{itemize}
  \item[\textbf{[U1]}] N.~Ugle\v{s}i\'c, \emph{The quotient shapes of normed spaces and
    application}, arXiv:1903.06456v1 [math.FA], \cite{U1};
  \item[\textbf{[U2]}] N.~Ugle\v{s}i\'c, \emph{On the duals of normed spaces and quotient
    shapes}, arXiv:1903.06459v1 [math.FA], \cite{U2};
  \item[\textbf{[U3]}] N.~Ugle\v{s}i\'c, \emph{An extension of the normed dual functors},
    arXiv:1903.06467v2 [math.FA], \cite{U3},
\end{itemize}
give the impression that ``the classification of the (finite) quotient shapes of
normed spaces has been completely settled'': here [U3] uses the technique of direct limits to
associate with every normed space $X$ a ``hyperdual space'' $\widetilde X$ which satisfies
$D^{2}(\widetilde X)\cong\widetilde X$ and contains $X$ and all even-order duals $D^{2n}(X)$;
these spaces form an increasing sequence under the canonical isometric embeddings. This is the
same object as the tower space of the present paper (Definition \ref{def:tower-space}), differing only
in name.
The uncompleted direct-limit object used in [U3] is denoted here by $U_X$; our tower space
$\widetilde X$ is its Banach completion. This distinction is substantive: Example
\ref{ex:cat-canonical} shows that $U_X$ is incomplete for every non-reflexive Banach space $X$.

\subsubsection{From the perspective of limits of Banach spaces}

Semadeni and Zidenberg \cite{semadeni1965} were the first to make a systematic study of the
inductive and inverse limits in the category of Banach spaces (see \cite{semadeni1971}).
Gwizdek \cite{gwizdek2023} (see also his doctoral dissertation \cite{gwizdek2024}) explicitly
uses the name ``\emph{tower system}'' for the sequence of iterated second duals, and introduces
two functors $\Dir$ and $\Inv$ for normed spaces and bounded linear operators; he proves that
$\Dir$ preserves direct sum decompositions, the spectrum of an operator and the Fredholm
property, and finally applies this machinery to the problem of the support of representing
measures in function algebras. The present paper keeps the name
``tower system'', with $\widetilde{(\cdot)}$ and $\overleftarrow{(\cdot)}$ denoting $\Dir$ and
$\Inv$ (see Definitions \ref{def:tower-space} and \ref{def:inv-tower}).

\subsubsection{The contribution of this paper}

Section \ref{sec:prelim} gives explicit proofs of the limit constructions and estimates used in the
corrections. Standard operator-theoretic properties summarized in Proposition \ref{prop:preserved}
are quoted from Gwizdek \cite{gwizdek2023}. The corrections in Section \ref{sec:corrections} rely
on the results and proofs established independently at each step, rather than on the disputed
assertions of the three preprints. The third part (Section \ref{sec:open}) addresses two further
questions about the objects of the theory and answers both in the negative.

\subsection{Problems in the three preprints}\label{sub:problems}

While writing \cite{dualchain}, and in order to find out what was already known about the
problem of the ``limit of the dual chain'', the main conclusions of [U1], [U2] and
[U3] were checked one by one, and \emph{explicit counterexamples} to several of them were found. These errors
fall into three classes, corresponding to the three subsections of Section
\ref{sec:corrections} of this paper:

\begin{enumerate}
  \item[\textbf{(E1)}] \textbf{Dimension conclusions.} Theorem 4, Theorem 5, Corollary 4,
    Corollary 5 and Remark 3 of [U2], as well as Theorem 2(iii) of [U3], all rest on the
    assumption that the algebraic (Hamel) dimension $\dim X^{*}$ of the dual space
    is determined by $\dim X$. In §\ref{sec:dim} we show that this picture is wrong: the
    density character $\dens(X)$ only gives a \emph{general upper bound} for the cardinality of
    the dual, $\dim X\le\dim X^{*}\le|X^{*}|\le\cc^{\dens(X)}$, and does not determine
    $\dim X^{*}$.
  \item[\textbf{(E2)}] \textbf{Category and functors.} The statements in Lemma 4 and in
    Theorem 1 and Theorem 2 of [U3] concerning ``the hyperdual space is a Banach space'' and
    ``the second dual functor commutes with the direct limit'' depend on the category in which
    the limit is taken. In §\ref{sec:cat} we use commutative diagrams to describe the correct
    picture: the limit must be taken in $\Ban_1$ (or in $\iBan$), and ``first take the limit
    and then the second dual'' and ``first take the second dual and then the limit'' are in
    general \emph{not} isomorphic.
  \item[\textbf{(E3)}] \textbf{Wording.} The formulation of Theorem 2(ii) of [U3] about the
    direct sum decomposition is loosely worded; we record the topological direct-sum
    interpretation---``$\dotplus$'' denotes the topological direct sum, and ``isometrically''
    qualifies the two embeddings---and show that it holds unconditionally; we explain why
    the stronger isometric interpretations cannot hold (§\ref{sec:wording}), and we
    list a number of misprints in the three papers and give a comparison of
    terminology and numbering.
\end{enumerate}

Two remarks are in order. First, the problems above have nothing to do with the
\emph{remaining} conclusions of [U1], [U2] and [U3]: a large number of technical lemmas in the
three papers (on the adjointness of the canonical embedding, on the cofinite modification of
inverse systems, on $D$ turning retractions into embeddings, and so on) are correct and can
continue to be used; in §\ref{sec:cat} and §\ref{sec:wording} of this paper we point out
explicitly which conclusions are unaffected. Second, the aim of this paper is not to assign
blame. The present paper grew out of a correspondence between the two authors in
September 2026 about the problems above: Ugle\v{s}i\'c readily acknowledged these observations
and, no longer having the energy to go through the old manuscripts again, agreed that the
correct theory be written up in the present form.

\subsection{Structure of the paper and main results}\label{sub:summary}

The paper consists of three parts. The first part (Section \ref{sec:prelim}) is a
self-contained preparation: we establish the basic theory of tower systems independently of the
three preprints mentioned above. The second part (Section
\ref{sec:corrections}) is the corrective part; according to (E1)--(E3) of \S\ref{sub:problems}
it splits into three subsections, each of which proceeds in the order ``quote the original
statement $\to$ give a counterexample $\to$ write down the correct statement (or explain that
the question is open)''. The third part (Section \ref{sec:open}) adds three facts about the
objects of the theory: the exact dimension value $\dim\ell^{\infty**}=2^{2^{\cc}}$, together
with negative answers to two further questions, \textbf{(O1)} and \textbf{(O2)}.

The contents of the first part are as follows. We first fix the category
(\S\ref{sub:cat-prelim}: $\Ban$, $\Ban_1$, $\iBan$ and, after dropping completeness, $\NF$,
$\iNF$); this is a prerequisite for the later discussion of whether $D^{2}$ commutes with the
direct limit, and it also fixes what the word ``limit'' means here, since a limit depends on the category in which it is taken. Then we
introduce the two central objects of this paper---the tower space (hyperdual space) and the
inverse tower space (\S\ref{sub:tower}):
\[
  \widetilde X=\Dir(X)=\varinjlim_n X^{(2n)},
  \qquad
  \overleftarrow X=\Inv(X)=\varprojlim_n X^{(2n+1)} .
\]
Here the objects of the direct limit are taken to be the explicitly constructed completions
(this is a key difference in convention between this paper and [U3]; see
\S\ref{sub:cat-category}), so that $\widetilde X$ is a Banach space by construction, and we prove
that $\overleftarrow X\equiv\widetilde X^{\,*}$ (Theorem \ref{thm:dir-dual-inv}). We also point
out that there is a third natural object on the chain, namely
$\varinjlim_n X^{(2n+1)}=\widetilde{X^{*}}$, which is in general \emph{not} to be identified with
$\overleftarrow X$ (Remark \ref{rem:odd-direct}); this distinction becomes visible only when
$X$ is non-reflexive. Afterwards we give the basic properties of the limit functors
(\S\ref{sub:functors}), illustrate the exact shape of the limit by taking the James space as a
concrete example (\S\ref{sub:james}), and finally establish the basic facts about the Hamel
dimension of (hyper)dual spaces (\S\ref{sub:dim-basic}), the core of which is
\begin{equation}
  \dim X\ \le\ \dim X^{*}\ \le\ |X^{*}|\ \le\ \cc^{\dens(X)} ,
\end{equation}
that is, the \emph{density character} $\dens(X)$ yields a general upper bound for $\dim X^{*}$,
whereas $\dim X$ does not determine it. The strictness of the second chain is independent of
Goldberg's theorem
(Theorem \ref{thm:goldberg}), but in order to understand what it ``tends to'' one has to use
the first part.

The three subsections of the second part give the following results. \S\ref{sec:dim} deals
with dimensions: several assertions in the three preprints (Theorem 4, Theorem 5, Corollary 4,
Corollary 5 and Remark 3 of [U2], Lemma 4(ii) and Question 1 of [U1], and Theorem 2(iii) of
[U3]) are based on the picture that ``$\dim X^{*}$ is determined by $\dim X$'', and this
picture is wrong (Theorem \ref{thm:dim-counter}): when $\dim X=\cc$, $\dim X^{*}$ can be
either $\cc$ (as for $\ell^{1}$, $c_{0}$ and the James space) or $2^{\cc}$ (as for
$\ell^{\infty}$). The correct quantitative picture is Proposition \ref{prop:dim-correct}, the
root cause of the error being that the Hamel dimension was mistaken for an input of the dual operator (Remark
\ref{rem:dim-root}); for the hyperdual space we use the general lower bound
$\dim\widetilde X\ge\sup_{n}\dim X^{(2n)}$ (Proposition \ref{prop:dim-tower-lower}), while for
$\ell^{\infty}$ the exact values are known and are given
by the beth numbers, $\dim\widetilde{\ell^{\infty}}=\beth_{\omega+1}$ (Remark \ref{rem:beth-tower}).
\S\ref{sec:cat} deals
with categories: the assertion in [U3] that ``the second dual functor commutes with the direct
limit'' depends on the category in which the direct limit is taken; the correct procedure is
to take the objects to be the explicitly constructed completions (in agreement with Gwizdek
\cite{gwizdek2023} and Semadeni--Zidenberg \cite{semadeni1965}), and then Theorem
\ref{prop:colim-embed} replaces the equality in [U3, Theorem 1 (i)] by an \emph{isometric
embedding} \eqref{eq:cat-correct}, so that [U3, Theorem 2 (i)] can only be downgraded to
$\widetilde X\hookrightarrow D^{2}(\widetilde X)$
(\eqref{eq:cat-hyperdual-embed}). In particular \S\ref{sec:cat} separates the completeness of the
uncompleted limit from the preservation of a completed limit under bidualization. The general
comparison map is isometric and need not be surjective. For the dual tower, the composite
constructed in \S\ref{sub:cat-correct} is precisely $J_{\widetilde X}$, whose surjectivity is
equivalent to reflexivity of $X$ by Proposition \ref{prop:limit-reflexive}. The existence of an
arbitrary bounded or isometric isomorphism $\widetilde X\to\widetilde X^{**}$ is a separate
assertion, and this comparison-map argument does not settle it for all $X$. An earlier version of the present paper used the parareflexivity terminology of [U3]: in the usage of [U3] parareflexivity requires
only for the \emph{existence} of some isometric isomorphism $T(X)\to T(X)^{**}$, whereas the
surjectivity of a \emph{specified} embedding is in general a strictly
stronger requirement; in the classical case, where the specified map is
the canonical embedding $J_{X}$, surjectivity is exactly reflexivity
(Remark \ref{rem:reflexive-vs-iso}, Example \ref{ex:james}); [U3, Remark
1 (b)] itself lists the surjectivity question as undecided. \S\ref{sec:wording} deals with the wording: the formulation of [U3, Theorem 2
(ii)] about $\widetilde X\cong D^{2k}(X)\dotplus N(r_{2k})$ is loosely worded, and we record
the topological direct-sum interpretation ($\dotplus$ denotes the topological direct sum, and
``isometrically'' qualifies only the two embeddings), which holds unconditionally;
we explain why the stronger isometric interpretations cannot hold, and we also show that the
conclusion follows directly from Proposition \ref{prop:tower-layer-retractions}, established
independently;
at the end of that subsection
we also list a number of misprints in the three preprints and give a comparison of terminology
and of the numbering of the references.

The third part (Section \ref{sec:open}) completes the picture with three facts about the
objects of the theory: the exact dimension value $\dim\ell^{\infty**}=2^{2^{\cc}}$
\eqref{eq:open-dim}, which replaces the upper bound used in \S\ref{sec:dim} and shows that the
general lower bound $\dim\widetilde X\ge\sup_{n}\dim X^{(2n)}$ of Proposition
\ref{prop:dim-tower-lower} can be strict; and answers to the two questions \textbf{(O1)} and
\textbf{(O2)} of \S\ref{sec:open}, both of which are \emph{negative}: $\Sh_{\bar0}(\ell^{\infty})$
differs from the unique $l_{p}$-type $\Sh_{\bar0}(l_{p})$ (Theorem
\ref{thm:shape-classification}), and the projection onto a layer of a tower space cannot in
general be chosen to be an $M$-projection (Proposition \ref{prop:no-M-projection}, with the
pair $X=c_{0}$, $k=1$). The weaker question whether a two-contracted retraction with
$\|r_{2k}\|\le1$ and $\|\id-r_{2k}\|\le1$ can always be found is a separate question, and it is
not settled here. Neither answer depends on the choices we made with regard to the three
preprints.

\subsection{AI usage statement}

The research and writing of this paper made use of DeepSeek-V4.1-Flash. GPT-6 Pro and Opus 5.5
were used to check this paper. 

%=====================================================================
\section{Preliminaries}\label{sec:prelim}

In this section we fix the definitions and basic facts that are used repeatedly throughout the
paper. Conventions: $\K$ denotes the field of real numbers $\R$ or the field of complex
numbers $\C$, and a vector space always means a vector space over $\K$; letters such as $X,Y$
denote normed linear spaces, the norm is always written $\|\cdot\|$, and the same symbol is
used for the norm on the dual space (no confusion can arise from the context). In accordance
with the character of this paper, this section is a \emph{self-contained} preparation:
whenever we make an assertion about tower systems we give an independent definition and proof,
without relying on any of the preprints under review; in fact, this section does not quote the
three preprints that will be examined in Section \ref{sec:corrections}.

Throughout the tower constructions, the initial space $X$ is assumed to be a Banach space.
Statements about dual cardinalities and finite quotient shapes are formulated separately for
arbitrary normed spaces when appropriate. We write
\[
X_n=X^{(2n)},\qquad
\kappa_{n,n+1}=J_{X_n}:X_n\longrightarrow X_{n+1},\qquad
\kappa_{n,\infty}:X_n\longrightarrow\widetilde X.
\]
After identifying the layers by their connecting isometries, let
\[
U_X=\bigcup_{n\geq0}\kappa_{n,\infty}(X_n),
\qquad \widetilde X=\overline{U_X}.
\]
Thus $U_X$ denotes the uncompleted direct limit, whereas $\widetilde X$ denotes its Banach
completion. For a fixed $k\geq1$, a retraction onto the $k$-th layer is a bounded linear map
\[
r_{2k}:\widetilde X\longrightarrow X_k,
\qquad r_{2k}\kappa_{k,\infty}=\id_{X_k}.
\]
The corresponding projection on the tower space is
\[
P_k=\kappa_{k,\infty}r_{2k}:\widetilde X\longrightarrow\widetilde X.
\]
In particular, the complementary operator is $\id_{\widetilde X}-P_k$. The adjacent-layer embedding
$J_{X_k}$ and the structure map $\kappa_{k,\infty}$ have different codomains and are kept
distinct throughout.

The material of this section is arranged in three layers. The first layer
(\S\ref{sub:dual-prelim}--\S\ref{sub:examples-prelim}) is basic functional analysis: dual
spaces, the canonical embedding, reflexivity, the distinction between the three kinds of
``isomorphism'', Goldberg's theorem, and the propagation of non-reflexivity along the dual
chain. The second layer (\S\ref{sub:cat-prelim}) is basic category theory: categories,
functors, direct and inverse systems and their limits, and why the limit of the dual chain
must be taken in $\Ban_1$ (indeed in $\iBan$). The third layer
(\S\ref{sub:tower}--\S\ref{sub:dim-basic}) is the tower systems themselves: the construction,
the universal properties and the mutual relations of the two limit spaces (the hyperdual space
$\widetilde X$ and the inverse tower space $\overleftarrow X$), the properties of the two
limit functors $\Dir$ and $\Inv$, and the basic facts about the Hamel dimension of
(hyper)dual spaces.

The central conclusions of this section are: for a non-reflexive Banach space $X$ the two
dual chains are strictly increasing and never ``stabilize'' (Theorem \ref{thm:chain},
Corollary \ref{cor:strict-chain}); after gluing the two chains into a direct limit and an
inverse limit respectively, the resulting spaces are still non-reflexive (Proposition
\ref{prop:limit-reflexive}); and the two limit spaces are dual to each other, that is,
$\overleftarrow X\equiv\widetilde X^{*}$ (Theorem \ref{thm:dir-dual-inv}).

\subsection{Dual spaces, the canonical embedding and reflexivity}\label{sub:dual-prelim}

\begin{definition}[Dual space and higher duals]\label{def:dual}
Let $(X,\|\cdot\|)$ be a normed linear space. The set of all continuous linear functionals on
$X$,
\[
  X^{*}=\bigl\{\,f:X\to\K\ \bigm|\ f\ \text{is linear and continuous}\,\bigr\}
\]
with pointwise addition and scalar multiplication, is a vector space over $\K$; it is called
the \emph{dual space} (conjugate space) of $X$ and is endowed with the \emph{dual norm}
\[
  \|f\|_{X^{*}}=\sup\bigl\{\,|f(x)|\bigm| x\in X,\ \|x\|\le1\,\bigr\}.
\]
In the sequel we abbreviate $\|f\|_{X^{*}}$ to $\|f\|$. Inductively we define the
\emph{$n$-th dual space} of $X$ by
\[
  X^{(0)}=X,\qquad X^{(n+1)}=\bigl(X^{(n)}\bigr)^{*}\quad(n\ge0),
\]
and we call $D(X):=X^{*}$ the \emph{dual functor} and $D^{n}(X)=X^{(n)}$ the $n$-fold dual.
\end{definition}

The dual space is always ``better'' than the original space: it is automatically complete.
This fact is used repeatedly in the inductive arguments below.

\begin{lemma}[The dual space is a Banach space]\label{lem:dual-banach}
Let $Y$ be a normed linear space. Then $Y^{*}$ is complete with respect to the dual norm,
that is, $Y^{*}$ is a Banach space. In particular, the dual space of any normed linear space
is a Banach space.
\end{lemma}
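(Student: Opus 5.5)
The plan is the standard Cauchy-sequence argument, run entirely in the dual norm and using only the completeness of the scalar field $\K$.

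\textbf{Step 1: pointwise limit.} Let $(f_n)$ be a Cauchy sequence in $Y^{*}$. For each fixed $y\in Y$ we have
\[
|f_n(y)-f_m(y)|\le\|f_n-f_m\|\,\|y\|,
\]
so $(f_n(y))$ is Cauchy in $\K$. Since $\K$ is complete, it converges; define $f(y)=\lim_n f_n(y)$.

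\textbf{Step 2: linearity and boundedness of the limit.} Linearity of $f$ follows by passing to the limit in $f_n(\alpha y+\beta z)=\alpha f_n(y)+\beta f_n(z)$. A Cauchy sequence is bounded, so $M:=\sup_n\|f_n\|<\infty$. Hence $|f(y)|\le M\|y\|$, and $f\in Y^{*}$.

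\textbf{Step 3: convergence in norm.} This is the only step needing care, because the pointwise convergence of Step~1 must be upgraded to uniform convergence on the unit ball. Given $\varepsilon>0$, choose $N$ such that $\|f_n-f_m\|<\varepsilon$ for all $n,m\ge N$. For $\|y\|\le1$ and $n,m\ge N$ this gives $|f_n(y)-f_m(y)|<\varepsilon$. Letting $m\to\infty$ with $n$ and $y$ fixed yields $|f_n(y)-f(y)|\le\varepsilon$. The bound does not depend on $y$, so taking the supremum over the unit ball gives $\|f_n-f\|\le\varepsilon$ for all $n\ge N$. Thus $f_n\to f$ in $Y^{*}$.

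\textbf{Conclusion.} The vector-space structure and the fact that the dual norm is a norm are immediate, so $Y^{*}$ is a Banach space. The "in particular" clause is just the statement itself applied to an arbitrary normed space $Y$. No completeness of $Y$ is used anywhere in the argument; this is exactly why the lemma can later be applied to uncompleted spaces such as $U_X$.
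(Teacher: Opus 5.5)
Your proof is correct and follows the paper's argument essentially line by line: the pointwise limit via completeness of $\K$, linearity and the bound $M=\sup_n\|f_n\|$ from the Cauchy property, then letting $m\to\infty$ to obtain $\|f_n-f\|\le\varepsilon$ for $n\ge N$. The only cosmetic difference is that you restrict to the unit ball, whereas the paper carries the factor $\|y\|$ throughout.
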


\begin{proof}
Let $(f_n)$ be a Cauchy sequence in $Y^{*}$. For every $y\in Y$ we have
$|f_n(y)-f_m(y)|\le\|f_n-f_m\|\,\|y\|$, so $(f_n(y))$ is a Cauchy sequence in $\K$; write
$f(y)=\lim_{n\to\infty}f_n(y)$.

First we show that $f\in Y^{*}$. The linearity of $f$ follows from the fact that limits
commute with linear operations. Since $(f_n)$ is a Cauchy sequence, $\|f_n\|$ is bounded; let
$M$ be a bound for it. From $|f_n(y)|\le M\|y\|$, letting $n\to\infty$ we get
$|f(y)|\le M\|y\|$, so $f$ is continuous and $\|f\|\le M$.

Next we show that $f_n\to f$. Given $\varepsilon>0$, choose $N$ such that
$\|f_n-f_m\|\le\varepsilon$ whenever $m,n\ge N$. For fixed $y$, letting $m\to\infty$ gives
$|f_n(y)-f(y)|\le\varepsilon\|y\|$, and this holds for all $y\in Y$ and all $n\ge N$; hence
$\|f_n-f\|\le\varepsilon$ for $n\ge N$. Thus $f_n\to f$ and $Y^{*}$ is complete.
\end{proof}

\begin{definition}[The second dual and the canonical embedding]\label{def:canonical}
Let $X$ be a normed linear space. For every $x\in X$, the map $J_X(x):X^{*}\to\K$ defined by
\[
  \bigl(J_X(x)\bigr)(f)=f(x),\qquad f\in X^{*},
\]
is a continuous linear functional on $X^{*}$, that is, $J_X(x)\in X^{**}:=(X^{*})^{*}$.
The linear map
\[
  J_X:X\longrightarrow X^{**}
\]
is called the \emph{canonical embedding} (natural embedding, canonical map) of $X$ into its
\emph{second dual space}. Applying this construction level by level, we obtain spaces and
embeddings
\[
  X^{(m)}\xrightarrow{\ J_{X^{(m)}}\ }X^{(m+2)},\qquad m\ge0 .
\]
\end{definition}

\begin{lemma}[The canonical embedding is isometric]\label{lem:isometry}
For any normed linear space $X$ and any $x\in X$ we have $\|J_X(x)\|=\|x\|$.
Consequently $J_X$ is injective and $J_X(X)$ is a linear subspace of $X^{**}$; if $X$ is
complete, then $J_X(X)$ is moreover a \emph{closed} linear subspace of $X^{**}$.
\end{lemma}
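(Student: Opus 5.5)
The plan is to prove the two inequalities $\|J_X(x)\|\le\|x\|$ and $\|J_X(x)\|\ge\|x\|$ separately, and then read off the remaining assertions.

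\textbf{Upper bound.} This is immediate from the definition of the dual norm. For $f\in X^{*}$ with $\|f\|\le1$,
\[
|(J_X(x))(f)|=|f(x)|\le\|f\|\,\|x\|\le\|x\|,
\]
and taking the supremum over such $f$ gives $\|J_X(x)\|\le\|x\|$. The same estimate shows that $J_X(x)$ is bounded, and $J_X(x)$ is clearly linear in $f$.

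\textbf{Lower bound.} This is the main step, and I will get it from the Hahn--Banach theorem. If $x=0$ there is nothing to prove. If $x\neq0$, define $g_0(\lambda x)=\lambda\|x\|$ on the one-dimensional subspace $\K x$. Then $g_0$ is linear and $|g_0(\lambda x)|=\|\lambda x\|$, so $\|g_0\|=1$. By the norm-preserving Hahn--Banach extension theorem (in the complex case, the Bohnenblust--Sobczyk version), $g_0$ extends to some $f\in X^{*}$ with $\|f\|=1$ and $f(x)=\|x\|$. Hence
\[
\|J_X(x)\|\ge|f(x)|=\|x\|.
\]
The only nontrivial input here is the existence of this norming functional; the rest is bookkeeping. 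Combining the two bounds gives $\|J_X(x)\|=\|x\|$.

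\textbf{Remaining assertions.} Linearity of $J_X$ follows from $f(\alpha x+\beta y)=\alpha f(x)+\beta f(y)$. Injectivity follows from isometry: $J_X(x)=0$ forces $\|x\|=0$. The image $J_X(X)$ is then a linear subspace of $X^{**}$, being the image of a linear map.

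\textbf{Closedness when $X$ is complete.} Let $J_X(x_n)\to\varphi$ in $X^{**}$. Since $J_X$ is isometric, $(x_n)$ is Cauchy in $X$, because $\|x_n-x_m\|=\|J_X(x_n)-J_X(x_m)\|$. By completeness $x_n\to x$ for some $x\in X$. By continuity of $J_X$, $J_X(x_n)\to J_X(x)$, so $\varphi=J_X(x)\in J_X(X)$. Thus $J_X(X)$ is closed.
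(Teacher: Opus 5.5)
Your proposal is correct and follows essentially the same route as the paper: the dual-norm estimate gives $\|J_X(x)\|\le\|x\|$, and a Hahn--Banach norming functional gives the reverse inequality. For closedness the paper argues that the isometric image of a complete space is complete and hence closed; your sequence argument is the same reasoning written out directly.
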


\begin{proof}
For $\|f\|\le1$ we have $|\bigl(J_X(x)\bigr)(f)|=|f(x)|\le\|x\|$, hence
$\|J_X(x)\|\le\|x\|$. The reverse inequality comes from the Hahn--Banach theorem: if $x\ne0$,
then there is $f\in X^{*}$ with $\|f\|=1$ and $f(x)=\|x\|$, whence
$\|J_X(x)\|\ge|f(x)|=\|x\|$ (for $x=0$ the assertion is trivial). Therefore
$\|J_X(x)\|=\|x\|$.

An isometric map is clearly injective. Moreover an isometric map carries Cauchy sequences to
Cauchy sequences and preserves completeness: if $X$ is complete, then $J_X(X)$ is complete,
while $X^{**}=(X^{*})^{*}$ is complete by Lemma \ref{lem:dual-banach}; a complete subspace of
a complete space is closed, so $J_X(X)$ is closed in $X^{**}$.
\end{proof}

\begin{definition}[Reflexivity]\label{def:reflexive}
Let $X$ be a normed linear space. If the canonical embedding $J_X:X\to X^{**}$ is surjective
(equivalently, $J_X(X)=X^{**}$), then $X$ is called \emph{reflexive}. By Lemma
\ref{lem:isometry}, in that case $J_X$ is an isometric isomorphism, so a reflexive normed
linear space is always isometrically isomorphic to its second dual space.
\end{definition}

\begin{remark}[Two points on reflexivity]\label{rem:reflexive-points}
\begin{enumerate}
  \item[(i)] The requirement that the canonical embedding be surjective is essential:
    reflexivity means that the \emph{particular canonical map} $J_X$ is surjective, and not
    that ``there exists some isometric isomorphism $X\to X^{**}$''. Example \ref{ex:james}
    provides a Banach space $J$ that is \emph{isometrically isomorphic} to $J^{**}$ and yet
    \emph{non-reflexive}. Hence in the sequel ``non-reflexive'' always means that $J_X$ is not
    surjective.
  \item[(ii)] Since $X^{**}$ is complete (Lemma \ref{lem:dual-banach}) and $J_X$ is
    isometric, it follows that a reflexive normed linear space is necessarily a Banach space.
    In other words, \emph{an incomplete normed linear space is never reflexive}; conversely, a
    Banach space need not be reflexive (Example \ref{ex:c0}).
\end{enumerate}
\end{remark}

\begin{corollary}[All higher duals are Banach spaces]\label{cor:higher-dual-banach}
For any normed linear space $X$ and every $m\ge1$, the space $X^{(m)}$ is a Banach space; in
particular $X^{*},X^{**},X^{***},\dots$ are all complete.
\end{corollary}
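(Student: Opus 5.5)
The plan is a direct induction on $m$ that reduces everything to Lemma \ref{lem:dual-banach}, with no further analytic input. The statement concerns $X^{(m)}$ for $m\ge1$ only, and $X$ itself is allowed to be incomplete. So the base case must be $m=1$, not $m=0$.

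\textbf{Base case.} By Definition \ref{def:dual}, $X^{(1)}=\bigl(X^{(0)}\bigr)^{*}=X^{*}$. Lemma \ref{lem:dual-banach} applies with $Y=X$, an arbitrary normed linear space, and shows that $X^{*}$ is a Banach space.

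\textbf{Inductive step.} Suppose $X^{(m)}$ has been shown to be a normed linear space (indeed a Banach space) for some $m\ge1$. Then $X^{(m+1)}=\bigl(X^{(m)}\bigr)^{*}$ is the dual of a normed linear space. Lemma \ref{lem:dual-banach} with $Y=X^{(m)}$ gives completeness of $X^{(m+1)}$ with respect to its dual norm.

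There is one small point to keep straight. Lemma \ref{lem:dual-banach} only needs its argument to be normed, not complete. Hence the induction hypothesis is used only to guarantee that each $X^{(m)}$ is a normed space carrying its dual norm, and that is built into Definition \ref{def:dual} anyway. In fact no induction hypothesis about completeness is genuinely needed: every $X^{(m)}$ with $m\ge1$ is by definition the dual of the normed space $X^{(m-1)}$, so the lemma applies to it directly. I would present the argument in this one-line form and mention the induction only as the way the spaces $X^{(m)}$ are defined.

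I do not expect any real obstacle. The only thing to get right is excluding $m=0$: $X^{(0)}=X$ need not be complete, in line with Remark \ref{rem:reflexive-points}(ii). The listed examples $X^{*},X^{**},X^{***}$ are then the cases $m=1,2,3$.
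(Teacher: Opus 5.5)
Your proof is correct and is essentially the paper's: an induction on $m$ with base case $m=1$ from Lemma \ref{lem:dual-banach}, and an inductive step noting that $X^{(m+1)}=(X^{(m)})^{*}$ is the dual of a normed space. Your remark that the completeness part of the induction hypothesis is never used is also right, since the paper's step likewise relies only on $X^{(m)}$ being normed.
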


\begin{proof}
Induct on $m$. For $m=1$, $X^{(1)}=X^{*}$ is a Banach space by Lemma \ref{lem:dual-banach}.
If $X^{(m)}$ ($m\ge1$) is a Banach space, then $X^{(m+1)}=(X^{(m)})^{*}$ is again the dual
space of some normed linear space, so it is a Banach space by Lemma \ref{lem:dual-banach}.
\end{proof}

\subsection{The three kinds of ``isomorphism'' and their differences}\label{sub:iso-prelim}

In what follows we use both topological and linear notions of isomorphism, and reflexivity
lies precisely between the two; it is therefore necessary to distinguish them rigorously.

\begin{definition}[Homeomorphism, linear homeomorphism, isometric isomorphism]\label{def:isomorphisms}
Let $X,Y$ be normed linear spaces and let $T:X\to Y$ be a bijection.
\begin{enumerate}
  \item[(i)] If $T$ and $T^{-1}$ are both continuous, then $T$ is called a
    \emph{topological isomorphism} (homeomorphism), and $X$ and $Y$ are said to be
    \emph{topologically equivalent}, written $X\cong Y$;
  \item[(ii)] If $T$ is linear and a homeomorphism, then $T$ is called a \emph{linear
    homeomorphism} (linear isomorphism). Linearity alone is not enough: the open mapping
    theorem says that a \emph{bounded} (equivalently, continuous) linear bijection between
    Banach spaces is automatically a homeomorphism, so for Banach spaces there is no difference
    between ``linear homeomorphism'' and ``bounded linear bijection''. The word ``bounded''
    cannot be dropped: if $\{e_n\}$ is the standard unit vector basis of $\ell^2$ and one
    extends it to a Hamel basis of $\ell^2$, then the linear map $T$ sending $e_n$ to $ne_n$
    and fixing every remaining vector of that Hamel basis is an algebraic bijection of $\ell^2$
    onto itself which is unbounded and discontinuous, hence not a homeomorphism;
  \item[(iii)] If $T$ is linear and $\|Tx\|=\|x\|$ for all $x\in X$, then $T$ is called an
    \emph{isometric isomorphism} (isometric homeomorphism), written $X\equiv Y$.
\end{enumerate}
The three requirements are successively stronger: isometric isomorphism $\Longrightarrow$
linear homeomorphism $\Longrightarrow$ homeomorphism, and the converse implications fail in
general. Almost all spaces discussed in the sequel are Banach spaces, and for these the open mapping theorem supplies the continuity of the inverse, so a
\emph{bounded} linear bijection is a homeomorphism; hence, for simplicity,
from now on $\cong$ always denotes a \emph{linear homeomorphism} between Banach
spaces and $\equiv$ denotes an isometric isomorphism (in each case the operator realizing it is
bounded, hence continuous, by definition).
\end{definition}

\begin{remark}[Reflexivity and the three kinds of isomorphism]\label{rem:reflexive-vs-iso}
By Definition \ref{def:reflexive} and Lemma \ref{lem:isometry},
\[
  X\ \text{is reflexive}\iff J_X\ \text{is surjective}\iff J_X\ \text{is an isometric isomorphism},
\]
that is to say: reflexivity requires that the isometric isomorphism between $X$ and $X^{**}$
be realized by the \emph{canonical} map, and not merely that some such isomorphism ``exists''.
If one only required $X\equiv X^{**}$ (some isometric isomorphism exists), or, more weakly,
$X\cong X^{**}$ (a linear homeomorphism), the condition would be much less restrictive;
Example \ref{ex:james} (the James space) shows that these weaker conditions are not sufficient
to imply reflexivity.
\end{remark}

\subsection{Goldberg's theorem: propagation of non-reflexivity along the dual chain}\label{sub:goldberg-prelim}

The following theorem is the key tool of the whole paper. It asserts that for Banach spaces
reflexivity is preserved in both directions by the operation of ``taking duals'', so that its
contrapositive expresses the propagation of non-reflexivity.

\begin{theorem}[Goldberg \cite{goldberg1960}]\label{thm:goldberg}
Let $X$ be a Banach space. Then
\[
  X\ \text{is reflexive}\iff X^{*}\ \text{is reflexive}.
\]
\end{theorem}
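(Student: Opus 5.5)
We prove the two implications separately. The only tools needed are the isometry and closedness of the canonical embedding (Lemma \ref{lem:isometry}), completeness of duals (Lemma \ref{lem:dual-banach}), and the Hahn--Banach theorem. The key identity, which follows directly from Definition \ref{def:canonical}, is
\[
  \bigl(J_{X^{*}}(f)\bigr)(J_X(x))=\bigl(J_X(x)\bigr)(f)=f(x),\qquad f\in X^{*},\ x\in X .
\]
In words: the restriction of $J_{X^{*}}(f)$ to $J_X(X)$, read back through $J_X$, is $f$ itself.

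\textbf{($\Rightarrow$)} Suppose $X$ is reflexive, so $J_X$ is an isometric isomorphism of $X$ onto $X^{**}$. Let $\Phi\in X^{***}$. Define $f=\Phi\circ J_X\in X^{*}$. For any $\xi\in X^{**}$, write $\xi=J_X(x)$. Then the identity above gives
\[
  \Phi(\xi)=f(x)=\bigl(J_{X^{*}}(f)\bigr)(\xi).
\]
Hence $\Phi=J_{X^{*}}(f)$, so $J_{X^{*}}$ is surjective and $X^{*}$ is reflexive. This direction is routine.

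\textbf{($\Leftarrow$)} Suppose $X^{*}$ is reflexive, and assume for contradiction that $J_X(X)\ne X^{**}$. Since $X$ is a Banach space, Lemma \ref{lem:isometry} says $J_X(X)$ is a \emph{closed} proper subspace of $X^{**}$. This is the only place completeness of $X$ is used, and it is the main point to get right. By the Hahn--Banach separation corollary there is a nonzero $\Phi\in X^{***}$ vanishing on $J_X(X)$. Reflexivity of $X^{*}$ gives $\Phi=J_{X^{*}}(f)$ for some $f\in X^{*}$. Then for every $x\in X$,
\[
  0=\Phi(J_X(x))=f(x),
\]
so $f=0$ and therefore $\Phi=0$, a contradiction. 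Hence $J_X$ is surjective and $X$ is reflexive.

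The only step requiring care is the use of closedness. Without it, the annihilator argument would only show that $J_X(X)$ is dense in $X^{**}$. This matches Remark \ref{rem:reflexive-points}(ii): an incomplete space has a reflexive dual (its dual equals the dual of its completion) without being reflexive itself, so the hypothesis that $X$ is Banach cannot be dropped.
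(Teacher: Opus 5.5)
Your proof is correct and is essentially the paper's: ($\Rightarrow$) is the same evaluation computation with $f=\Phi\circ J_X$, and ($\Leftarrow$) uses exactly the same mechanism. There, closedness of $J_X(X)$, which comes from completeness, yields a nonzero $\Phi\in X^{***}$ annihilating $J_X(X)$, and reflexivity of $X^{*}$ then forces $\Phi=0$; the only difference is that the paper builds $\Phi$ by hand from $d=\operatorname{dist}(z,J_X(X))>0$ and a Hahn--Banach extension, where you cite the standard corollary. In your closing remark, say that an incomplete space \emph{can} have a reflexive dual (e.g.\ $E^{\infty}$ of Example~\ref{ex:einf}), not that it always does, since its dual is reflexive exactly when its completion is.
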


\begin{proof}
($\Longrightarrow$) Suppose that $X$ is reflexive, that is, $J_X:X\to X^{**}$ is a surjective
isometric map. Take any $x^{***}\in X^{***}$ and put
\[
  x^{*}=x^{***}\circ J_X,\qquad\text{that is,}\qquad
  x^{*}(x)=x^{***}\bigl(J_X(x)\bigr),\quad x\in X .
\]
Then $x^{*}\in X^{*}$. We claim that $J_{X^{*}}(x^{*})=x^{***}$: for any
$x^{**}\in X^{**}$, since $J_X$ is surjective we may choose $x\in X$ with $x^{**}=J_X(x)$, and
then
\[
  \bigl(J_{X^{*}}(x^{*})\bigr)(x^{**})
  =x^{**}(x^{*})=(J_X(x))(x^{*})=x^{*}(x)
  =x^{***}\bigl(J_X(x)\bigr)=x^{***}(x^{**}).
\]
Hence $J_{X^{*}}$ is surjective and $X^{*}$ is reflexive.

($\Longleftarrow$) We prove the contrapositive: if $X$ is non-reflexive, then $X^{*}$ is
non-reflexive. In this case $J_X(X)\subsetneq X^{**}$. By Lemma \ref{lem:isometry} and the
completeness of $X$, the space $J_X(X)$ is a closed proper subspace of $X^{**}$. Choose
$z\in X^{**}\setminus J_X(X)$ and put $d:=\operatorname{dist}\bigl(z,J_X(X)\bigr)>0$; the
positivity of $d$ is exactly where completeness is used. On the subspace
$M:=J_X(X)+\K z$ define a linear functional $g$ by
\[
  g(m+az)=a,\qquad m\in J_X(X),\ a\in\K .
\]
This is well defined since $z\notin J_X(X)$. For $a\ne0$ and $m\in J_X(X)$ we have
$m+az=a\bigl(z-(-m/a)\bigr)$ with $-m/a\in J_X(X)$, whence
$\|m+az\|\ge|a|\,d$ and therefore
\[
  |g(m+az)|=|a|\le\frac{\|m+az\|}{d};
\]
the same estimate is trivial for $a=0$. Hence $g$ is bounded on $M$ with $\|g\|\le1/d$, and
$g\ne0$ because $g(z)=1$. By the Hahn--Banach theorem $g$ extends to a functional
$x^{***}\in(X^{**})^{*}=X^{***}$ with $\|x^{***}\|=\|g\|$; replacing $x^{***}$ by
$x^{***}/\|x^{***}\|$ we may assume $\|x^{***}\|=1$, and by construction
\[
  \|x^{***}\|=1,\qquad x^{***}\bigl(J_X(x)\bigr)=0\quad\text{for all }x\in X .
\]
Suppose that $X^{*}$ were reflexive. Then $J_{X^{*}}:X^{*}\to X^{***}$ is surjective, so there
is $x^{*}\in X^{*}$ with $x^{***}=J_{X^{*}}(x^{*})$; then for all $x\in X$,
\[
  x^{*}(x)=(J_X(x))(x^{*})=\bigl(J_{X^{*}}(x^{*})\bigr)\bigl(J_X(x)\bigr)=x^{***}\bigl(J_X(x)\bigr)=0,
\]
that is, $x^{*}=0$ and hence $x^{***}=0$, contradicting $\|x^{***}\|=1$. Therefore $X^{*}$ is
non-reflexive.
\end{proof}

\begin{remark}[Completeness cannot be dropped]\label{rem:goldberg-complete}
The two directions of Theorem \ref{thm:goldberg} do not depend on completeness in the same
way. The direction ``$\Longrightarrow$'' uses only the surjectivity of $J_X$, and holds for an
arbitrary normed linear space; the key step of the direction ``$\Longleftarrow$'' is the
statement that ``$J_X(X)$ is a \emph{closed} proper subspace of $X^{**}$'', which requires $X$
to be complete. If in the theorem the Banach space is replaced by a general normed linear
space, the contrapositive form ``$X$ non-reflexive $\Longrightarrow$ $X^{*}$ non-reflexive''
fails; see Example \ref{ex:einf}.
\end{remark}

\begin{definition}[The dual chain and its strictness]\label{def:chain}
Let $X$ be a normed linear space. Along the canonical embeddings
\[
  X^{(m)}\xrightarrow{\ J_{X^{(m)}}\ }X^{(m+2)}
\]
we identify $X^{(m)}$ with the subspace $J_{X^{(m)}}(X^{(m)})$ of $X^{(m+2)}$, and thereby
obtain the two chains
\[
\begin{tikzcd}[row sep=3.0em, column sep=3.2em]
  X \arrow[r, "J_X"] \arrow[rr, bend left=20, "J_{X^{(2)}}\circ J_X"]
    & X^{(2)} \arrow[r, "J_{X^{(2)}}"] & X^{(4)} \arrow[r, "J_{X^{(4)}}"] & \cdots \\[-4pt]
  X^{*} \arrow[r, "J_{X^{*}}"] & X^{(3)} \arrow[r, "J_{X^{(3)}}"] & X^{(5)} \arrow[r, "J_{X^{(5)}}"] & \cdots
\end{tikzcd}
\]
If every inclusion in these two chains is a proper inclusion, we say that the two
\emph{dual chains are strictly increasing}; if there is $m$ such that
$X^{(k+2)}=J_{X^{(k)}}(X^{(k)})$ for all $k\ge m$, we say that the dual chain
\emph{stabilizes} at $X^{(m)}$. (The compatibility of this identification is guaranteed by the
naturality of the canonical embedding: for $T\in\mathcal{L}(X,Y)$ one has
$J_Y\circ T=T^{**}\circ J_X$, and taking $T=J_{X^{(m)}}$ gives
$J_{X^{(m+2)}}\circ J_{X^{(m)}}=(J_{X^{(m)}})^{**}\circ J_{X^{(m)}}$, that is, ``going from
$X^{(m)}$ to $X^{(m+4)}$ via $X^{(m+2)}$'' and ``going from $X^{(m)}$ to $X^{(m+4)}$ in one
step'' are the same map.)
\end{definition}

\begin{theorem}[Non-reflexivity propagates along the dual chain]\label{thm:chain}
Let $X$ be a Banach space. If $X$ is non-reflexive, then $X^{(n)}$ is non-reflexive for every
$n\ge1$.
\end{theorem}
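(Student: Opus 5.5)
The plan is to argue by induction on $n$, using the contrapositive direction ``$\Longleftarrow$'' of Goldberg's theorem (Theorem \ref{thm:goldberg}) at each step. The statement to propagate is: $X^{(n)}$ is a non-reflexive Banach space. For $n=0$ this is the hypothesis.

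For the inductive step, suppose $X^{(n)}$ is a non-reflexive Banach space for some $n\ge0$. Apply Theorem \ref{thm:goldberg} to the Banach space $Y=X^{(n)}$. Since $Y$ is not reflexive, the equivalence ``$Y$ reflexive $\iff Y^{*}$ reflexive'' forces $Y^{*}=X^{(n+1)}$ to be non-reflexive. To continue, we also need $X^{(n+1)}$ to be complete. This holds by Corollary \ref{cor:higher-dual-banach}, or directly by Lemma \ref{lem:dual-banach}, since $X^{(n+1)}$ is the dual of a normed space. Hence $X^{(n+1)}$ is again a non-reflexive Banach space, and the induction closes, giving the claim for every $n\ge1$.

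The only point requiring care is the completeness hypothesis in Goldberg's theorem. By Remark \ref{rem:goldberg-complete}, the direction we use genuinely needs the space to be Banach. So the induction must carry completeness along at every stage, not just non-reflexivity. For $n=0$ completeness is the standing assumption on $X$; for $n\ge1$ it is automatic because every dual is complete. Once this is recorded in the inductive statement, there is no further obstacle. One could also note, for later use, that non-reflexivity of $X^{(m)}$ means $J_{X^{(m)}}(X^{(m)})\subsetneq X^{(m+2)}$ for every $m$, which is exactly strictness of the two chains in Definition \ref{def:chain}.
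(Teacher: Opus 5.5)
Your proof is correct and is essentially the paper's argument: induction on $n$, applying Theorem \ref{thm:goldberg} at each stage to conclude that $X^{(n+1)}$ is non-reflexive, with the completeness needed at every level supplied by Corollary \ref{cor:higher-dual-banach}. The only difference is cosmetic: you start the induction at $n=0$ with the hypothesis itself, whereas the paper treats $n=1$ as a separate base case.
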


\begin{proof}
By Corollary \ref{cor:higher-dual-banach}, the spaces $X^{(n)}$ ($n\ge1$) are Banach spaces,
so Theorem \ref{thm:goldberg} applies to them. Induct on $n$.

Base case ($n=1$): $X^{(0)}=X$ is a Banach space and non-reflexive. If $X^{*}$ were reflexive,
then the direction ``$\Longleftarrow$'' of Theorem \ref{thm:goldberg} would give that $X$ is
reflexive, a contradiction. Hence $X^{*}$ is non-reflexive.

Inductive step: suppose that $X^{(k)}$ ($k\ge1$) is a Banach space and non-reflexive. Since
$X^{(k+1)}=(X^{(k)})^{*}$, applying Theorem \ref{thm:goldberg} to the Banach space $X^{(k)}$
gives: if $X^{(k+1)}$ were reflexive, then $X^{(k)}$ would be reflexive, contradicting the
induction hypothesis. Hence $X^{(k+1)}$ is non-reflexive.
\end{proof}

\begin{corollary}[The two chains are strictly increasing and never stabilize]\label{cor:strict-chain}
Let $X$ be a Banach space and suppose that $X$ is non-reflexive. Then, in the sense of
Definition \ref{def:chain}, the two dual chains
\[
  X\subsetneq X^{(2)}\subsetneq X^{(4)}\subsetneq\cdots,\qquad
  X^{*}\subsetneq X^{(3)}\subsetneq X^{(5)}\subsetneq\cdots
\]
are both strictly increasing. In particular, the dual chain does not ``stabilize'' at a
reflexive space.
\end{corollary}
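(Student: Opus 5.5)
The argument is a direct consequence of Theorem \ref{thm:chain} and the definition of strictness in Definition \ref{def:chain}. Each inclusion in the two chains has the form
\[
J_{X^{(m)}}\bigl(X^{(m)}\bigr)\subseteq X^{(m+2)},\qquad m\ge0 .
\]
The even chain uses $m=0,2,4,\dots$ and the odd chain uses $m=1,3,5,\dots$. Such an inclusion is an equality exactly when $J_{X^{(m)}}$ is surjective, that is, when $X^{(m)}$ is reflexive in the sense of Definition \ref{def:reflexive}. So strictness of every inclusion amounts to the statement that no $X^{(m)}$ with $m\ge0$ is reflexive.

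The key steps, in order, are as follows.

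\textbf{Case $m=0$.} The space $X^{(0)}=X$ is non-reflexive by hypothesis.

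\textbf{Case $m\ge1$.} Theorem \ref{thm:chain} applies directly, since $X$ is a Banach space. It gives that $X^{(m)}$ is non-reflexive, so $J_{X^{(m)}}(X^{(m)})\subsetneq X^{(m+2)}$.

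\textbf{Conclusion.} Every inclusion in both chains is therefore proper, which is strictness in the sense of Definition \ref{def:chain}.

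For the ``never stabilizes'' clause, stabilization at $X^{(m)}$ would require $X^{(k+2)}=J_{X^{(k)}}(X^{(k)})$ for all $k\ge m$. Already $k=m$ contradicts the properness just shown. In particular, no space in either chain is reflexive, so neither chain can terminate at a reflexive space.

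There is no genuine obstacle. The one point needing care is bookkeeping of the identifications. The chain is formed by identifying $X^{(m)}$ with its image under $J_{X^{(m)}}$, and the composite embeddings are compatible by the naturality remark in Definition \ref{def:chain}. Hence ``proper inclusion'' means precisely non-surjectivity of the single map $J_{X^{(m)}}$, and the reduction to reflexivity is legitimate.

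Completeness of $X$ enters only through Theorem \ref{thm:chain}, where it is needed for Goldberg's theorem. Without it, the higher duals are still complete by Corollary \ref{cor:higher-dual-banach}, but the base step from $X$ to $X^{*}$ can fail, as Remark \ref{rem:goldberg-complete} warns.
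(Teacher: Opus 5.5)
Your proposal is correct and follows essentially the same route as the paper. You reduce properness of each inclusion $J_{X^{(m)}}(X^{(m)})\subseteq X^{(m+2)}$ to non-reflexivity of $X^{(m)}$, obtain that from the hypothesis ($m=0$) and Theorem \ref{thm:chain} ($m\ge1$), and rule out stabilization because it would make $J_{X^{(m)}}$ surjective. Your separate treatment of $m=0$ by hypothesis is slightly more careful than the paper's wording, which attributes the case $X^{(0)}=X$ to Theorem \ref{thm:chain} even though that theorem is stated for $n\ge1$.
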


\begin{proof}
By Theorem \ref{thm:chain}, for every $k\ge0$ the spaces $X^{(2k)}$ (including $X^{(0)}=X$)
and $X^{(2k+1)}$ are non-reflexive; and by Definition \ref{def:reflexive}, $X^{(m)}$ is
non-reflexive if and only if $J_{X^{(m)}}\bigl(X^{(m)}\bigr)\subsetneq X^{(m+2)}$. Taking
$m=2k$ and $m=2k+1$ yields the two proper inclusion chains. As for stabilization: if the chain
stabilized at $X^{(m)}$, then from some step on every canonical embedding would be surjective,
in particular $X^{(m)}$ would be reflexive, contradicting Theorem \ref{thm:chain}.
\end{proof}

\begin{corollary}\label{cor:one-reflexive}
Let $X$ be a Banach space. If $X^{(n)}$ is reflexive for some $n\ge0$, then $X$ is reflexive.
Equivalently,
\[
  X\ \text{is non-reflexive}\iff \text{all}\ X^{(n)}\ (n\ge0)\ \text{are non-reflexive}.
\]
\end{corollary}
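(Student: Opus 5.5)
The plan is to derive Corollary \ref{cor:one-reflexive} directly from Theorem \ref{thm:chain}, which is its contrapositive. Suppose $X^{(n)}$ is reflexive for some $n\ge0$. If $n=0$ there is nothing to prove. If $n\ge1$ and $X$ were non-reflexive, then Theorem \ref{thm:chain} (applied to the Banach space $X$) would make $X^{(n)}$ non-reflexive, a contradiction. Hence $X$ is reflexive.

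For the displayed equivalence, the direction ``$\Longleftarrow$'' is trivial: take $n=0$, since $X^{(0)}=X$. For the direction ``$\Longrightarrow$'', assume $X$ is non-reflexive. The case $n=0$ is the hypothesis, and the cases $n\ge1$ are exactly the conclusion of Theorem \ref{thm:chain}. So all $X^{(n)}$ are non-reflexive. This direction is also just the contrapositive of the first sentence of the corollary.

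The only point needing care is the case $n\ge1$, where Goldberg's theorem has to be applied to Banach spaces. This is already handled inside Theorem \ref{thm:chain} through Corollary \ref{cor:higher-dual-banach}, and the standing assumption that $X$ itself is Banach covers the level $n=0$. I do not expect a real obstacle.

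One could also argue in the opposite direction by repeated use of Theorem \ref{thm:goldberg} (``$\Longleftarrow$''), descending from $X^{(n)}$ to $X^{(n-1)}$ and so on down to $X$. Each step is valid because every $X^{(k)}$ with $k\ge1$ is complete, and $X$ is complete by hypothesis. This gives a second, equally short proof.
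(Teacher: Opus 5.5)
Your proof is correct and matches the paper's. The paper proves the first sentence by the descending repeated application of Theorem~\ref{thm:goldberg} that you give as your second argument, with completeness of each level supplied exactly as you note, and then combines this with Theorem~\ref{thm:chain} to get the displayed equivalence, as you do; your primary route, which reads the first sentence as the contrapositive of Theorem~\ref{thm:chain}, is the same Goldberg argument run upward instead of downward.
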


\begin{proof}
Suppose that $X^{(n)}$ is reflexive. By Theorem \ref{thm:goldberg} we argue backwards step by
step: $X^{(n-1)}$ and $X^{(n)}$ are dual to each other, hence $X^{(n-1)}$ is reflexive;
repeating this $n$ times gives that $X=X^{(0)}$ is reflexive. Combined with Theorem
\ref{thm:chain} this yields the equivalent form.
\end{proof}

\begin{remark}[Strictness of the chain is not the same as the spaces being non-isomorphic]\label{rem:strict-vs-iso}
The word ``strict'' in Corollary \ref{cor:strict-chain} refers only to the level of the
\emph{images of the canonical embeddings}; it does not mean that the various $X^{(m)}$ are
mutually non-isomorphic. The James space $J$ of Example \ref{ex:james} satisfies
$J\equiv J^{**}$: two neighboring levels are even \emph{isometrically isomorphic}, and yet
$J_J(J)\subsetneq J^{**}$ still holds, so that the two chains are still strictly increasing and
never stabilize. Thus ``$X$ is non-reflexive'' and ``$X$ is not isomorphic to $X^{**}$''---where ``isomorphic'' may mean \emph{isometrically} isomorphic or merely
a linear homeomorphism---are two distinct conditions.
\end{remark}

\subsection{Three basic examples}\label{sub:examples-prelim}

Neither the word ``Banach'' in Theorem \ref{thm:goldberg} nor the word ``canonical'' in
Definition \ref{def:reflexive} can be relaxed. The three examples below show, in turn,
that non-reflexive Banach spaces do exist; that the theorem fails once completeness is
dropped; and that the theorem fails once ``reflexive'' is silently replaced by
``isometrically isomorphic to the second dual''. These three examples will recur in
the discussion of Section \ref{sec:corrections}.

\begin{example}[A non-reflexive Banach space: $c_0$]\label{ex:c0}
Take $X=c_0$ (the space of all sequences converging to $0$, with the supremum norm). Then
\[
  c_0^{*}=\ell^{1},\qquad c_0^{**}=(c_0^{*})^{*}=(\ell^{1})^{*}=\ell^{\infty},
\]
while $J_{c_0}(c_0)=c_0$ is a proper subspace of $\ell^{\infty}$, so $c_0$ is non-reflexive.
By the contrapositive form of Theorem \ref{thm:goldberg}, $\ell^{1}$ and $\ell^{\infty}$ are
non-reflexive as well \cite[Chapter IV, Exercise 2]{rudin1991} (for a general textbook
account see also \cite{conway1990}).
\end{example}

\begin{example}[Dropping completeness: $E^{\infty}$]\label{ex:einf}
Take
\[
  X=E^{\infty}=\bigl\{\,x=(\xi_1,\xi_2,\dots)\ \bigm|\ \text{only finitely many }\xi_k\ne0\,\bigr\},
\]
endowed with the $\ell^{2}$ norm $\|x\|=\bigl(\sum_{k\ge1}|\xi_k|^{2}\bigr)^{1/2}$.
Then $X$ is a dense linear subspace of $\ell^{2}$; in particular, $X$ is incomplete; by Remark
\ref{rem:reflexive-points}(ii), $X$ is non-reflexive. On the other hand:
\begin{enumerate}
  \item[(1)] $X^{*}\equiv(\ell^{2})^{*}=\ell^{2}$. Indeed, take any $f\in X^{*}$; by the
    Hahn--Banach theorem there is $F\in(\ell^{2})^{*}$ with $\|F\|=\|f\|$ and $F|_{X}=f$; since
    $X$ is dense in $\ell^{2}$, the $F\in(\ell^{2})^{*}$ satisfying $F|_{X}=f$ is unique, so
    $f\mapsto F$ is an isometric isomorphism $X^{*}\to(\ell^{2})^{*}$.
  \item[(2)] By (1), $X^{*}\equiv\ell^{2}$ is reflexive while $X$ is non-reflexive; hence
    ``$X$ non-reflexive $\Longrightarrow$ $X^{*}$ non-reflexive'' fails for incomplete $X$.
  \item[(3)] Moreover $X^{**}\equiv\ell^{2}$, while $J_X(X)=X$ is dense but not closed in
    $\ell^{2}$, hence $J_X(X)\subsetneq X^{**}$, which is consistent with the
    non-reflexivity of $X$.
\end{enumerate}
This example shows that the completeness hypothesis in Theorem \ref{thm:goldberg} cannot be
omitted.
\end{example}

\begin{example}[Isometrically isomorphic to the second dual without being reflexive: the James space]\label{ex:james}
James \cite{james1951} constructed the following real Banach space $J$:
\[
  J=\Bigl\{\,x=(\xi_1,\xi_2,\dots)\ \Bigm|\ \lim_{n\to\infty}\xi_n=0,\ \|x\|<+\infty\,\Bigr\},
\]
where
\[
  \|x\|=\sup\Bigl[\sum_{i=1}^{n}\bigl(\xi_{k_{2i-1}}-\xi_{k_{2i}}\bigr)^{2}
  +\xi_{k_{2n+1}}^{2}\Bigr]^{1/2},
\]
the supremum being taken over all $n\ge1$ and all tuples of positive integers
$k_1<k_2<\cdots<k_{2n+1}$. The space $(J,\|\cdot\|)$ is a Banach space, and
\begin{enumerate}
  \item[(1)] $J$ is separable and $\{e_n\}$ (where $e_n$ has $1$ in the $n$-th position and
    $0$ elsewhere) is a shrinking monotone basis of $J$;
  \item[(2)] $\{e_n\}$ is not boundedly complete, so $J$ is non-reflexive;
  \item[(3)] $J$ is \emph{isometrically isomorphic} to $J^{**}$, and all conjugate spaces of
    $J$ are separable.
\end{enumerate}
By (3), there exists an isometric isomorphism between $J$ and $J^{**}$, but none of them is the
canonical embedding: by (2) and Definition \ref{def:reflexive},
$J_J(J)\subsetneq J^{**}$. This shows that ``$X$ is isometrically isomorphic to $X^{**}$''
never implies ``$X$ is reflexive''; reflexivity concerns only the \emph{canonical embedding}
(Remark \ref{rem:reflexive-vs-iso}).
\end{example}

%=====================================================================
\subsection{Categorical background: direct and inverse limits}\label{sub:cat-prelim}

The two directions of the dual chain yield a direct system and an inverse system respectively;
hence the question ``what is the limit of the chain'' depends first of all on ``in which
category the limit is taken''. Since readers coming from analysis may not be familiar with
category theory (for a systematic account see \cite{maclane1998}), in this subsection we list the
notions we need in the form ``concrete category $+$ explicit construction'', and use the
language of universal properties only where necessary.

\begin{definition}[Categories, functors and natural transformations]\label{def:category}
A \emph{category} $\mathcal{C}$ consists of the following data: a class of objects
$\Ob(\mathcal{C})$; for every pair of objects $X,Y$ a set of morphisms
$\mathcal{C}(X,Y)$; a composition law
$\circ:\mathcal{C}(Y,Z)\times\mathcal{C}(X,Y)\to\mathcal{C}(X,Z)$ which is associative; and for
every object $X$ an identity morphism $\id_X\in\mathcal{C}(X,X)$ which acts as a unit for
composition.

Let $\mathcal{C},\mathcal{D}$ be categories. A \emph{covariant functor}
$F:\mathcal{C}\to\mathcal{D}$ sends an object $X$ to an object $F(X)$ and a morphism
$f\in\mathcal{C}(X,Y)$ to $F(f)\in\mathcal{D}\bigl(F(X),F(Y)\bigr)$, and preserves identities
and composition:
\[
  F(\id_X)=\id_{F(X)},\qquad F(g\circ f)=F(g)\circ F(f).
\]
A \emph{contravariant functor} $F:\mathcal{C}\to\mathcal{D}$ assigns to $f:X\to Y$ a morphism
$F(f):F(Y)\to F(X)$ and satisfies $F(g\circ f)=F(f)\circ F(g)$, together with preservation of
identity morphisms.

Let $F,G:\mathcal{C}\to\mathcal{D}$ be functors and let
$\eta=(\eta_X)_{X\in\Ob(\mathcal{C})}$ be a family of morphisms with
$\eta_X:F(X)\to G(X)$. If $\eta_Y\circ F(f)=G(f)\circ\eta_X$ holds for every
$f\in\mathcal{C}(X,Y)$, that is, if the following diagram commutes,
\[
\begin{tikzcd}[column sep=3.6em,row sep=2.6em]
  F(X) \arrow[r,"F(f)"] \arrow[d,"\eta_X"'] & F(Y) \arrow[d,"\eta_Y"]\\
  G(X) \arrow[r,"G(f)"'] & G(Y)
\end{tikzcd}
\]
then $\eta$ is called a \emph{natural transformation} from $F$ to $G$, written
$\eta:F\rightsquigarrow G$; if every $\eta_X$ is an isomorphism, then $\eta$ is called a
\emph{natural isomorphism}.
\end{definition}

\begin{definition}[Direct and inverse systems, direct and inverse limits]\label{def:limits}
Let $\langle I,\le\rangle$ be a directed set (that is, $\le$ is a preorder and any two elements
have an upper bound); in this paper only the case $I=\N$ (with the natural order) is used, and
below we fix the index set $I$ for ease of exposition.

\emph{(Direct systems and direct limits.)} A \emph{direct system} (or inductive system) in
$\mathcal{C}$ is a family of objects $(X_i)_{i\in I}$ together with a family of morphisms
$\bigl(f_{ij}:X_i\to X_j\bigr)_{i\le j}$ satisfying
\[
  f_{ii}=\id_{X_i},\qquad f_{ik}=f_{jk}\circ f_{ij}\quad (i\le j\le k).
\]
An object $X$ together with a family of morphisms $f_i:X_i\to X$ ($i\in I$) is called a
\emph{cocone} of this direct system if $f_j\circ f_{ij}=f_i$ ($i\le j$). The cocone $(X,f_i)$
is called the \emph{direct limit} of the direct system (also called inductive limit, forward
limit or colimit) if it satisfies the following \emph{universal property}: for every cocone
$(Y,g_i)$ there is a unique morphism $u:X\to Y$ with $u\circ f_i=g_i$ for all $i\in I$:
\[
\begin{tikzcd}[column sep=small,row sep=small]
  X_i \arrow[r,"f_i"] \arrow[dr,"g_i"'] & X \arrow[d,dashed,"\exists!\,u"]\\
  & Y
\end{tikzcd}
\]
If a direct limit exists, it is unique up to isomorphism and is denoted by
$\varinjlim_iX_i$.

\emph{(Inverse systems and inverse limits.)} Now reverse all arrows. An \emph{inverse
system} (or projective system) in $\mathcal{C}$ is a family of objects $(Y_i)_{i\in I}$
together with a family of morphisms $\bigl(\rho_{ij}:Y_j\to Y_i\bigr)_{i\le j}$ satisfying
\[
  \rho_{ii}=\id_{Y_i},\qquad \rho_{ik}=\rho_{ij}\circ\rho_{jk}\quad (i\le j\le k).
\]
An object $Y$ together with a family of morphisms $\rho_i:Y\to Y_i$
($\rho_{ij}\circ\rho_j=\rho_i$, $i\le j$) is called a \emph{cone}. The cone $(Y,\rho_i)$ is
called the \emph{inverse limit} of the inverse system (also called the projective limit)
if for every cone $(Z,g_i)$ there is a unique $u:Z\to Y$ with
$\rho_i\circ u=g_i$ for all $i\in I$:

\[
\begin{tikzcd}[column sep=small,row sep=small]
  Z \arrow[d,dashed,"\exists!\,u"'] \arrow[dr,"g_i"] & \\
  Y \arrow[r,"\rho_i"'] & Y_i
\end{tikzcd}
\]
If an inverse limit exists, it too is unique up to isomorphism and is denoted by
$\varprojlim_iY_i$.
\end{definition}

\begin{remark}[The intuitive difference between the two kinds of limits]\label{rem:limit-picture}
A direct limit ``glues a family of objects into a single whole along the arrows''; when the
connecting maps are injective---which is the case throughout this paper---it is realized as the
increasing union $\bigcup_iX_i$ (\S\ref{sub:cat-prelim}), and is therefore in general larger than
each of its members, the differences between the members being ``joined together'' in the process
of gluing. An inverse limit ``consists of the mutually compatible elements of a family of
objects'', that is, of the coherent threads running through the whole system; there is no
\emph{a priori} size restriction on such a family of threads, and in particular an inverse limit
need \emph{not} be smaller than each of its members. The inverse limit of the finite-dimensional
spaces $\C^{n}$ along the truncation maps is $\ell^{\infty}$ (Remark \ref{rem:separable-caveat}),
which is vastly larger than each $\C^{n}$. Whether an inverse limit is comparable in size with the
members of the system has to be decided case by case, and does not follow from the universal
property alone. We shall see below that the
two directions of the dual chain give exactly these two constructions (\S\ref{sub:tower}), and
that the two are dual to each other.
\end{remark}

\begin{example}[Direct limits in vector spaces]\label{ex:colim-vect}
In the category of vector spaces over $\K$, the direct limit can be realized as the quotient of
the disjoint union $\bigsqcup_{i\in I}X_i=\bigcup_i\bigl(X_i\times\{i\}\bigr)$ by the
equivalence relation
\[
  (x_i,i)\sim(x_j,j)\iff \exists\,k\ge i,j:\ f_{ik}(x_i)=f_{jk}(x_j),
\]
the linear operations on the equivalence classes being defined by ``first push both elements to
the same level and then add them'', and with $f_i(x_i):=[(x_i,i)]$. When all the $f_{ij}$ are
injective (which is the case in this paper), the direct limit can be viewed intuitively as the
increasing union $\bigcup_iX_i$, and the equivalence classes are the ``eventual images'' of the
elements.
\end{example}

\begin{definition}[The three categories of Banach spaces]\label{def:ban-categories}
Let
\[
  \Ban:\ \text{objects are Banach spaces, morphisms are all bounded linear operators};
\]
\[
  \Ban_1:\ \text{same objects, morphisms are contractions, i.e. } \|T\|\le1;
\]
\[
  \iBan:\ \text{same objects, morphisms are isometric linear maps (not required to be surjective)}.
\]
The three categories have the same class of objects while their classes of morphisms are
successively smaller; they are thus three different ``resolutions'' of the same universe of
objects. In the literature $\Ban_1$ and $\iBan$ are also frequently written $(\BF)_1$ and
$\iBF$ (the $\mathbf i$ standing for ``isometric''); if the completeness requirement is
dropped, one writes correspondingly $\NF$ (normed spaces) and $\iNF$ (isometric normed spaces).
\end{definition}

\begin{theorem}[Existence of limits in $\Ban_1$ {\cite{semadeni1965,semadeni1971}}]\label{thm:ban1-limits}
In $\Ban_1$, every direct system indexed by a directed set has a direct limit, and every
inverse system has an inverse limit.
\end{theorem}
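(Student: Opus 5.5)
Both limits will be built explicitly, and the universal properties checked directly; the only essential use of contractivity is the uniform bound $\|u\|\le1$, which lets us pass to completions and to bounded threads.

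\textbf{Direct limits.} Let $(X_i,f_{ij})$ be a direct system in $\Ban_1$ over a directed set $I$. First form the algebraic direct limit $V$ of Example \ref{ex:colim-vect}, with canonical maps $\varphi_i:X_i\to V$. On $V$ define
\[
p\bigl([(x_i,i)]\bigr)=\lim_{j\ge i}\|f_{ij}(x_i)\| .
\]
This is the limit of a net that is non-increasing along $j$, because every $f_{jk}$ is a contraction. Hence the limit exists and equals the infimum. It is independent of the representative: two representatives agree at some common level $k$, and both nets are determined by their tails beyond $k$. It is a seminorm, since it is a pointwise limit of seminorms after pushing finitely many elements to a common level.

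Next let $N=\{p=0\}$ and take $X$ to be the Banach completion of $V/N$. The structure maps $f_i$ are $\varphi_i$ followed by the quotient and the inclusion into the completion. They are contractions because $p(\varphi_i x)\le\|x\|$, and $f_jf_{ij}=f_i$ already holds in $V$.

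For the universal property, let $(Y,g_i)$ be a cocone with $\|g_i\|\le1$. The vector-space universal property gives a linear $u_0:V\to Y$ with $u_0\varphi_i=g_i$. For $j\ge i$ we have $\|u_0\varphi_i x\|=\|g_jf_{ij}x\|\le\|f_{ij}x\|$, and taking the infimum over $j$ gives $\|u_0v\|\le p(v)$. So $u_0$ vanishes on $N$, factors through $V/N$ with norm at most $1$, and extends uniquely by continuity to $u:X\to Y$, since $Y$ is complete.

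Uniqueness holds because the union of the images $f_i(X_i)$ is dense in $X$. A point to check: in $\Ban_1$ the limit is required to be an object of $\Ban_1$ and $u$ a contraction, and the construction delivers both.

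\textbf{Inverse limits.} Let $(Y_i,\rho_{ij})$ be an inverse system in $\Ban_1$. Set
\[
Y=\Bigl\{(y_i)\in\prod_iY_i:\ \rho_{ij}y_j=y_i\ (i\le j),\ \sup_i\|y_i\|<\infty\Bigr\}
\]
with the norm $\|(y_i)\|=\sup_i\|y_i\|$, and let $\rho_i$ be the coordinate projections. Then $Y$ is a closed subspace of the $\ell^\infty$-product $\ell^\infty(I;Y_i)$. That product is complete, and the coherence conditions are closed conditions, so $Y$ is a Banach space. Each $\rho_i$ is a contraction and $(Y,\rho_i)$ is a cone.

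Given a cone $(Z,g_i)$ of contractions, define $u(z)=(g_iz)_i$. This is coherent, and $\sup_i\|g_iz\|\le\|z\|$, so $u$ is a contraction with $\rho_iu=g_i$. Uniqueness is immediate because an element of $Y$ is determined by its coordinates.

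\textbf{Main obstacle.} The main point to handle carefully is the direct limit, in two respects. First, one must justify that $p$ is well defined and a seminorm; the monotone-net argument above, which needs contractivity, does this. Second, one must note that quotienting by $N$ and completing are both forced, so the limit is in general not the naive union. In the inverse case the only subtlety is restricting to bounded threads rather than all threads. The full product is not a normed object, and the contraction condition on cones is exactly what makes the image of $u$ bounded.
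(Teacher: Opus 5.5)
Your proof is correct. The paper does not prove this theorem; it quotes it from Semadeni--Zidenberg. The relevant comparison is therefore with the special cases the paper does work out.

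Your inverse limit is exactly the space of bounded coherent threads with the sup norm from Remark~\ref{rem:inv-limit-form}. You go further than the remark in two ways: you verify the universal property, which the remark only asserts, and you do so for an arbitrary directed set rather than for $\N$.

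Your direct limit is a genuine generalization of Lemma~\ref{lem:isometric-colim}. When the connecting maps are isometric, the net $j\mapsto\|f_{ij}(x_i)\|$ is constant. Then $p$ is the grain norm of Lemma~\ref{lem:isometric-colim}(ii), $N=\{0\}$, and your construction reduces to completing the union. For merely contractive connecting maps the quotient by $N$ cannot be avoided. For example, $\K\xrightarrow{1/2}\K\xrightarrow{1/2}\cdots$ has limit $\{0\}$ in $\Ban_1$, because any compatible family of contractions out of it must vanish. Your monotone-net definition of $p$ is what makes $p$ well defined and subadditive in that generality.

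You also correctly identify the two places where contractivity enters. The first is the monotonicity of the net. The second is the estimate $\|u_0v\|\le p(v)$, which comes from $\|g_j\|\le1$. Together they supply the uniform bound whose absence in $\Ban$ is illustrated in Remark~\ref{rem:why-ban1}.
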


\begin{remark}[Why not take the limit in $\Ban$]\label{rem:why-ban1}
The universal property of the direct limit is equivalent to
\begin{equation}\label{eq:colim-hom}
  \mathcal{L}_1\Bigl(\varinjlim_i X_i,\ Y\Bigr)\cong\varprojlim_i\mathcal{L}_1(X_i,Y),
  \qquad \mathcal{L}_1(X,Y):=\{T\in\mathcal{L}(X,Y):\|T\|\le1\},
\end{equation}
that is: ``contractions out of the direct limit'' are exactly ``compatible families of
contractions''. The completion of an increasing union does have the required factorization
property for compatible families of \emph{contractions}, and more generally for compatible
families of operators carrying a common norm bound. In the category $\Ban$, however, a
compatible family of bounded operators need not have uniformly bounded norms \emph{a priori}.
For example, let $X_n=(\K^{n},\|\cdot\|_\infty)$ with zero-padding inclusions
$i_{n,n+1}(x_1,\dots,x_n)=(x_1,\dots,x_n,0)$ and define
\begin{equation}\label{eq:no-uniform-bound}
  f_n(x_1,\ldots,x_n)=\sum_{j=1}^n jx_j \qquad(n\ge1).
\end{equation}
Each $f_n$ is bounded and the family $(f_n)$ is compatible, namely $f_m\circ i_{n,m}=f_n$ for
$n\le m$; but the induced functional $f$ on the algebraic union $c_{00}$ satisfies $f(e_j)=j$ and
therefore admits no bounded extension to $c_0$. Thus the completed union $c_0$ does not
satisfy the colimit universal property in $\Ban$ for this compatible family. Note that the
structure maps of the system into $c_0$ \emph{remain isometric}: what fails is the required
bounded factorization of an arbitrary compatible family of test maps, not the isometry of the
connecting maps. This is precisely why, when dealing with
(co)limits of Banach spaces, the literature invariably restricts the category to $\Ban_1$
(and, when necessary, further to the isometric subcategory $\iBan$): contractivity
automatically provides the required uniform norm bound (for a systematic discussion of
(co)limits and local presentability of the category of Banach spaces, see \cite{castillo2010}).
The dual chain of this paper goes one step further: all connecting maps are isometries, so that
one can compute directly in $\iBan$, see Lemma \ref{lem:isometric-colim}.
\end{remark}

\begin{lemma}[Direct limits of isometric direct systems]\label{lem:isometric-colim}
Let $(X_n,\kappa_{n,m})$ be an isometric direct system in $\iBan$ indexed by $\N$. Then
\begin{enumerate}
  \item[(i)] On $\mathbb{X}:=\bigsqcup_{n\ge0}X_n\big/\sim$ (the equivalence relation being as
    in Example \ref{ex:colim-vect}, which here takes the form
    $x_n\sim x_m\iff\kappa_{n,k}(x_n)=\kappa_{m,k}(x_m)$, $k=\max\{n,m\}$) there is a unique
    linear structure for which $\kappa_{n,\infty}(x_n):=[(x_n,n)]$ is a linear map;
  \item[(ii)] $\|[(x_n,n)]\|:=\|x_n\|$ does not depend on the choice of the representative, so
    it is a norm on $\mathbb{X}$, and every $\kappa_{n,\infty}$ is isometric;
  \item[(iii)] The direct limit $\varinjlim_n X_n$ is isometrically isomorphic to the
    completion of $(\mathbb{X},\|\cdot\|)$, and $\bigcup_n\kappa_{n,\infty}(X_n)$ is dense in it.
\end{enumerate}
\end{lemma}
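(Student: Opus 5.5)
We prove the three parts in order, and throughout we use that every $\kappa_{n,m}$ is an isometric linear map with $\kappa_{m,k}\kappa_{n,m}=\kappa_{n,k}$.

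For (i) we first check that $\sim$ is an equivalence relation. Reflexivity and symmetry are clear. For transitivity, assume $x_n\sim x_m$ and $x_m\sim x_p$, and set $k=\max\{n,m,p\}$. Apply $\kappa_{\max\{n,m\},k}$ and $\kappa_{\max\{m,p\},k}$ to the two defining equalities and use the composition law. This gives $\kappa_{n,k}(x_n)=\kappa_{m,k}(x_m)=\kappa_{p,k}(x_p)$. Since $\kappa_{\max\{n,p\},k}$ is injective, $\kappa_{n,\max\{n,p\}}(x_n)=\kappa_{p,\max\{n,p\}}(x_p)$. Injectivity is exactly why the relation may be tested at the level $\max\{n,m\}$ rather than at some larger $k$.

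Next we define the linear operations by pushing both representatives to a common level $k\ge n,m$:
\[
[(x_n,n)]+\lambda[(x_m,m)] := \bigl[(\kappa_{n,k}x_n+\lambda\kappa_{m,k}x_m,\,k)\bigr].
\]
This is independent of $k$ and of the representatives, because the $\kappa$'s are linear and compatible. The vector space axioms are then inherited from a single $X_k$. Each $\kappa_{n,\infty}$ is linear by construction. Uniqueness holds because $\mathbb X=\bigcup_n\kappa_{n,\infty}(X_n)$, and any two elements lie in a common $\kappa_{k,\infty}(X_k)$, since $\kappa_{n,\infty}=\kappa_{k,\infty}\kappa_{n,k}$. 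So linearity of the maps $\kappa_{n,\infty}$ forces the operations.

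For (ii), if $x_n\sim x_m$ with $m\ge n$, then $\|x_m\|=\|\kappa_{n,m}x_n\|=\|x_n\|$ because $\kappa_{n,m}$ is isometric, so the norm is well defined. The norm axioms are checked inside a common $X_k$, where $\|\cdot\|$ is a norm. Definiteness uses that $[(x,n)]=0$ means $\kappa_{n,k}x=0$ for some $k$, hence $x=0$. Isometry of $\kappa_{n,\infty}$ is then the definition.

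For (iii), let $\widetilde{\mathbb X}$ be the completion, with structure maps $\kappa_{n,\infty}$, which remain isometric. These form a cocone in $\iBan$ and in $\Ban_1$, and the union $\mathbb X$ is dense by construction. For the universal property, let $(Y,g_n)$ be a cocone of contractions, or of isometries. Define $u_0$ on $\mathbb X$ by $u_0[(x_n,n)]=g_n(x_n)$. This is well defined by the compatibility $g_m\kappa_{n,m}=g_n$, linear by the level-wise description, and satisfies $\|u_0\xi\|\le\|\xi\|$; it is isometric if every $g_n$ is. Hence $u_0$ extends uniquely by continuity to $u:\widetilde{\mathbb X}\to Y$ with $\|u\|\le1$, and $u$ is isometric in the $\iBan$ case. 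Uniqueness follows from density of $\mathbb X$ together with $u\kappa_{n,\infty}=g_n$.

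The main point to watch is \emph{in which category} the limit is meant. As Remark \ref{rem:why-ban1} shows, the argument needs the uniform bound $\sup_n\|g_n\|<\infty$, which holds in $\Ban_1$ and $\iBan$ but not in $\Ban$. We therefore state the universal property for both $\Ban_1$ and $\iBan$. By uniqueness of limits, any other direct limit is isometrically isomorphic to $\widetilde{\mathbb X}$, which gives the claim of (iii).
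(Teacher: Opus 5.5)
Your proof is correct and follows the same route as the paper. You push representatives to a common level to define the linear structure and the norm, use that the $\kappa_{n,m}$ are isometric to show the norm is well defined, and extend the induced contraction (or isometry) from the dense union to the completion, which gives the universal property in both $\Ban_1$ and $\iBan$. Your extra checks (transitivity via injectivity, uniqueness of the linear structure) fill in details the paper leaves implicit. The one slip is in definiteness: it needs $\|x\|=0\Rightarrow x=0$ in $X_n$, so that $[(x,n)]=[(0,n)]=0$, rather than the converse implication you cite; this does not affect the argument.
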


\begin{proof}
(i), (ii): if $n\le m$ and $x_n\sim x_m$, then $x_m=\kappa_{n,m}(x_n)$ (or conversely), and
since $\kappa_{n,m}$ is isometric we get $\|x_m\|=\|x_n\|$; hence the norm does not depend on
the choice of the representative. The linear structure is defined by ``pushing two elements to
the same level and adding them there'', and the isometry property guarantees that the result is
independent of how the elements are pushed.

(iii): let $Y$ be a Banach space and let a compatible family of contractions
$f_n:X_n\to Y$ be given (that is, $f_m\circ\kappa_{n,m}=f_n$ for $n\le m$). On the dense
subspace $\bigcup_n\kappa_{n,\infty}(X_n)$ put
$f\bigl(\kappa_{n,\infty}(x_n)\bigr):=f_n(x_n)$. By compatibility $f$ is well defined and
linear; moreover
$\bigl\|f\bigl(\kappa_{n,\infty}(x_n)\bigr)\bigr\|=\|f_n(x_n)\|\le\|x_n\|=\bigl\|\kappa_{n,\infty}(x_n)\bigr\|$,
so $f$ is a contraction and therefore extends uniquely to a contraction on the completion;
uniqueness follows from density. This is precisely the universal property of the direct limit.

If in addition the compatible cocone maps $f_n$ are all \emph{isometries}, then the induced map
$f$ preserves the norm on the algebraic union $\bigcup_n\kappa_{n,\infty}(X_n)$ and hence, by
density and continuity of $f$ and of the norm, on the completion as well. The same completed
object therefore also satisfies the colimit universal property in $\iBan$ for this isometric
direct system.
\end{proof}

\begin{remark}[Explicit description of the inverse limit]\label{rem:inv-limit-form}
For an inverse system $(Y_n,\rho_{n,m})$ in $\Ban_1$ indexed by $\N$
($\rho_{n,m}:Y_m\to Y_n$, $n\le m$), the inverse limit can be realized as
\[
  \varprojlim_n Y_n=\Bigl\{\,y=(y_n)_{n\ge0}\in\prod_{n\ge0}Y_n:\ 
  y_n=\rho_{n,m}(y_m)\ (n\le m),\ \|y\|:=\sup_n\|y_n\|<\infty\,\Bigr\},
\]
with the linear operations defined pointwise. This description is particularly convenient when
all the $\rho_{n,m}$ are isometric; when the $\rho_{n,m}$ are merely contractions the $\sup$
norm is indispensable, and the limit then consists of the bounded compatible families; no
comparison in size with the individual levels follows from the description alone. In
\S\ref{sub:tower} we shall see that the inverse system of the dual chain is
exactly of the latter kind, and that this is precisely the watershed between reflexivity and
non-reflexivity.
\end{remark}

\begin{remark}[The dual chain is an isometric direct system]\label{rem:tower-isometric}
By Lemma \ref{lem:isometry} the canonical embeddings
$J_{X^{(2m)}}:X^{(2m)}\to X^{(2m+2)}$ are all isometric, so the direct system
$\langle X^{(2n)},\kappa_{n,m}\rangle$ given by the dual chain (where
$\kappa_{n,m}:=J_{X^{(2m-2)}}\circ\cdots\circ J_{X^{(2n)}}$) is an \emph{isometric} direct
system in $\iBan$; whereas the connecting maps $\kappa^{*}_{n,m}$ of the inverse system given
by the odd-order dual chain are in general contractions and not isometries. In other words, the
two chains are different from the outset in their categorical ``shape'': the even chain is an
isometric direct system, while the odd chain is only a contractive inverse system. This is
precisely why, in what follows, we take a direct limit and an inverse limit, respectively.
\end{remark}

The chains of this section admit one further simplification. By Remark
\ref{rem:tower-isometric}, the connecting maps of the even-order dual chain are all isometric,
so that we actually work in the smaller isometric category $\iBan$; there the direct limit has
a very simple description (Lemma \ref{lem:isometric-colim}): one need not deal with seminorms;
one simply takes the completion of the increasing union, the norm being the original norm on
any one level. In the sequel, ``direct limit'' and ``inverse limit'' always mean the limits
taken in $\Ban_1$ (and, for the even chain, in $\iBan$).

\subsection{The two limit spaces}\label{sub:tower}

\subsubsection{Tower systems and the tower space (direct limit)}

\begin{definition}[Tower system]\label{def:tower-system}
Let $X$ be a Banach space. Put $X_n:=X^{(2n)}$ ($n\ge0$) and
$\kappa_{n,n+1}:=J_{X^{(2n)}}:X_n\to X_{n+1}$, and for $n\le m$ let
$\kappa_{n,m}:=\kappa_{m-1,m}\circ\cdots\circ\kappa_{n,n+1}$,
$\kappa_{n,n}:=\id_{X_n}$. The system $\langle X_n,\kappa_{n,m}\rangle$ is called the
\emph{tower system} of $X$.
\end{definition}

\begin{definition}[Tower space / hyperdual space]\label{def:tower-space}
Let $X$ be a Banach space. The direct limit of the tower system,
\[
  \widetilde{X}:=\varinjlim_{n\to\infty}\bigl(X^{(2n)},\kappa_{n,m}\bigr),
\]
is called the \emph{tower space} of $X$. $\widetilde X$ is the large space obtained by
gluing together the even-order duals of $X$, level by level, along the canonical embeddings, and
is therefore also called the \emph{hyperdual space} of $X$ in the literature; Gwizdek \cite{gwizdek2023}
writes it as $\Dir(X)$, and keeps the name ``tower (wie\.za)'' in \cite{gwizdek2024}. In this paper we use
both names but always the notation $\widetilde X$.
\end{definition}

\begin{theorem}[Existence of the tower space and its universal property]\label{thm:tower-exists}
Let $X$ be a Banach space. Then the tower space $\widetilde X$ exists and is a Banach space,
$\kappa_{0,\infty}:X\to\widetilde X$ is an isometric embedding, and
$\bigcup_{n\ge0}\kappa_{n,\infty}(X^{(2n)})$ is dense in $\widetilde X$. Moreover
$\widetilde X$ is characterized by the following universal property: for every Banach space $Y$
and every family of contractions $\bigl(f_n:X^{(2n)}\to Y\bigr)$ satisfying
$f_{n+1}\circ\kappa_{n,n+1}=f_n$ ($n\ge0$), there is a \emph{unique} contraction
$f:\widetilde X\to Y$ with $f\circ\kappa_{n,\infty}=f_n$ for all $n$. In particular, a space
satisfying this universal property is unique up to isometric isomorphism.
\end{theorem}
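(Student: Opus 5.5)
The plan is to reduce everything to Lemma \ref{lem:isometric-colim}, applied to the tower system of Definition \ref{def:tower-system}. By Lemma \ref{lem:isometry} each $\kappa_{n,n+1}=J_{X^{(2n)}}$ is a linear isometry. Finite composites of isometries are isometries, so every $\kappa_{n,m}$ is isometric. The cocycle identities $\kappa_{n,n}=\id$ and $\kappa_{n,k}=\kappa_{m,k}\circ\kappa_{n,m}$ hold by the very definition of $\kappa_{n,m}$ as an iterated composite. Hence $\langle X_n,\kappa_{n,m}\rangle$ is an isometric direct system in $\iBan$ indexed by $\N$, as already noted in Remark \ref{rem:tower-isometric}. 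All objects are Banach spaces: $X$ is Banach by hypothesis, and $X^{(2n)}$ for $n\ge1$ by Corollary \ref{cor:higher-dual-banach}.

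Lemma \ref{lem:isometric-colim}(i)--(ii) then gives the normed space $U_X=\mathbb X$ with isometric linear maps $\kappa_{n,\infty}$. We take $\widetilde X$ to be its completion, which is a Banach space by construction. Each $\kappa_{n,\infty}$ remains isometric after composing with the isometric inclusion into the completion; in particular this holds for $\kappa_{0,\infty}$. Density of $\bigcup_n\kappa_{n,\infty}(X^{(2n)})$ holds because a space is dense in its completion. Compatibility of the cocone, $\kappa_{n+1,\infty}\circ\kappa_{n,n+1}=\kappa_{n,\infty}$, is immediate from the equivalence relation, since $(x_n,n)\sim(\kappa_{n,n+1}x_n,n+1)$.

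For the universal property, I first note that the hypothesis is stated only for consecutive levels, $f_{n+1}\kappa_{n,n+1}=f_n$. By induction this gives $f_m\kappa_{n,m}=f_n$ for all $n\le m$, which is the form used in Lemma \ref{lem:isometric-colim}(iii). Following that lemma, I would define $f$ on $U_X$ by $f(\kappa_{n,\infty}x_n)=f_n(x_n)$.

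The one point needing care is well-definedness and linearity. Suppose $\kappa_{n,\infty}x_n=\kappa_{m,\infty}x_m$ with $n\le m$. Because $\kappa_{m,\infty}$ is injective, this forces $x_m=\kappa_{n,m}x_n$, and so $f_m(x_m)=f_n(x_n)$. For linearity, any two elements of $U_X$ can be pushed to a common level, where $f$ agrees with the linear map $f_m$.

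The estimate $\|f(\kappa_{n,\infty}x_n)\|\le\|x_n\|=\|\kappa_{n,\infty}x_n\|$ shows that $f$ is a contraction on $U_X$. Since $Y$ is complete, $f$ extends uniquely to a contraction on $\widetilde X$ (a uniformly continuous map on a dense subspace). Uniqueness of $f$ follows because any two continuous maps satisfying $f\circ\kappa_{n,\infty}=f_n$ agree on the dense set $U_X$.

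Finally, uniqueness up to isometric isomorphism is the standard argument for universal objects. Let $(Z,g_n)$ be another solution. The universal properties give contractions $u:\widetilde X\to Z$ and $v:Z\to\widetilde X$ commuting with the structure maps. Then $vu$ and $\id_{\widetilde X}$ both factor the cocone $(\kappa_{n,\infty})$ through $\widetilde X$, so by uniqueness $vu=\id$; likewise $uv=\id$. Two mutually inverse contractions are isometries, since $\|x\|=\|vux\|\le\|ux\|\le\|x\|$. I do not expect a real obstacle here. The only subtle step is the well-definedness above, which relies on injectivity of the structure maps, and that comes from the isometry property.
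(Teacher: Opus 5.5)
Your proposal is correct and follows essentially the same route as the paper. You reduce existence, completeness, the isometry of $\kappa_{0,\infty}$, density and the universal property to Lemma \ref{lem:isometric-colim} via Remark \ref{rem:tower-isometric}, then get uniqueness up to isometric isomorphism from the two mutually inverse contractions $u,v$; you only spell out details the paper leaves implicit, namely extending the consecutive compatibility $f_{n+1}\kappa_{n,n+1}=f_n$ to all $n\le m$ and well-definedness via injectivity of the structure maps.
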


\begin{proof}
By Remark \ref{rem:tower-isometric} the tower system is an isometric direct system in $\iBan$,
so Lemma \ref{lem:isometric-colim} yields existence and all the assertions except uniqueness,
while the universal property is exactly what was verified in the proof of part (iii) of that
lemma. As for uniqueness, suppose that $(Z,\kappa^{Z}_{n,\infty})$ satisfies the same universal
property as $(\widetilde X,\kappa_{n,\infty})$. Applying the universal property of
$\widetilde X$ to the cocone $\kappa^{Z}_{n,\infty}$ gives a contraction
$u:\widetilde X\to Z$ with $u\circ\kappa_{n,\infty}=\kappa^{Z}_{n,\infty}$. Applying the
universal property of $Z$ to the cocone $\kappa_{n,\infty}$ gives a contraction
$v:Z\to\widetilde X$ with $v\circ\kappa^{Z}_{n,\infty}=\kappa_{n,\infty}$. Uniqueness gives
$v\circ u=\id_{\widetilde X}$ and $u\circ v=\id_Z$. Since both maps are contractions, they are
inverse isometries.
\end{proof}

\subsubsection{The odd chain and the inverse tower space (inverse limit)}

\begin{definition}[Inverse tower space]\label{def:inv-tower}
Let $X$ be a Banach space. The inverse limit of the inverse system
$\bigl(X^{(2n+1)},\kappa^{*}_{n,m}\bigr)_{n\le m}$,
\[
  \overleftarrow{X}:=\varprojlim_{n\to\infty}\bigl(X^{(2n+1)},\kappa^{*}_{n,m}\bigr),
\]
is called the \emph{inverse tower space} of $X$; Gwizdek \cite{gwizdek2023} writes it as
$\Inv(X)$. By Remark \ref{rem:inv-limit-form} it can be written explicitly as
\[
  \overleftarrow{X}=\Bigl\{\,\varphi=(\varphi_0,\varphi_1,\dots)\ \Bigm|\ 
  \varphi_n\in X^{(2n+1)},\ \varphi_n=\kappa^{*}_{n,m}(\varphi_m)\ (n\le m),\
  \|\varphi\|=\sup_{n}\|\varphi_n\|<\infty\,\Bigr\}.
\]
\end{definition}

\begin{theorem}[The dual of the direct limit is the inverse limit]\label{thm:dir-dual-inv}
Let $X$ be a Banach space. Then there is an isometric isomorphism
\[
  \bigl(\widetilde{X}\bigr)^{*}=\Bigl(\varinjlim_{n}X^{(2n)}\Bigr)^{*}
  \equiv\varprojlim_{n}X^{(2n+1)}=\overleftarrow{X},
\]
the isomorphism being given by
$\Phi(\varphi):=\bigl(\varphi\circ\kappa_{n,\infty}\bigr)_{n\ge0}$.
More generally, for every Banach space $Y$ and every $T\in\mathcal{L}(X,Y)$ one has
$\Dir(T)^{*}=\Inv(T)$.
\end{theorem}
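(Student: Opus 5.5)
We first identify the connecting maps of the odd inverse system. For $n\le m$ the map $\kappa_{n,m}:X^{(2n)}\to X^{(2m)}$ is an isometric embedding, so its adjoint $\kappa^{*}_{n,m}:X^{(2m+1)}\to X^{(2n+1)}$ is the restriction map $\psi\mapsto\psi\circ\kappa_{n,m}$. It is a contraction, and by Hahn--Banach it is a quotient map: it is surjective, and every $\varphi_n$ has a preimage of the same norm. With this in hand, $\Phi$ is well defined. For $\varphi\in\widetilde X^{\,*}$ put $\varphi_n:=\varphi\circ\kappa_{n,\infty}\in X^{(2n+1)}$. Compatibility $\kappa^{*}_{n,m}(\varphi_m)=\varphi_m\circ\kappa_{n,m}=\varphi\circ\kappa_{m,\infty}\circ\kappa_{n,m}=\varphi_n$ follows from the cocone identity. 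Since each $\kappa_{n,\infty}$ is isometric (Theorem \ref{thm:tower-exists}), we get $\|\varphi_n\|\le\|\varphi\|$. Hence $\Phi(\varphi)\in\overleftarrow X$, and $\Phi$ is linear with $\|\Phi\|\le1$.

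Next we prove that $\Phi$ is isometric. The norm of $\varphi$ is attained in the limit on the dense subspace $U_X=\bigcup_n\kappa_{n,\infty}(X^{(2n)})$. Because this union is increasing, $\|\varphi\|=\sup_n\sup\{|\varphi(\kappa_{n,\infty}x)|:\|x\|\le1\}=\sup_n\|\varphi_n\|$. This uses once more that $\kappa_{n,\infty}$ maps the unit ball of $X^{(2n)}$ onto the unit ball of $\kappa_{n,\infty}(X^{(2n)})$. Injectivity also follows from density.

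For surjectivity, which is the step needing the universal property, take $(\varphi_n)\in\overleftarrow X$ with $M:=\sup_n\|\varphi_n\|<\infty$. If $M=0$ there is nothing to do. Otherwise the family $f_n:=\varphi_n/M:X^{(2n)}\to\K$ consists of contractions, and it is compatible: $f_{n+1}\circ\kappa_{n,n+1}=f_n$. Theorem \ref{thm:tower-exists} with $Y=\K$ gives a contraction $f$ on $\widetilde X$ with $f\circ\kappa_{n,\infty}=f_n$. Then $\varphi:=Mf$ satisfies $\Phi(\varphi)=(\varphi_n)$. This is exactly where the $\sup$-norm boundedness built into $\overleftarrow X$ is indispensable. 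Without it, the example of Remark \ref{rem:why-ban1} shows that no bounded extension need exist. I expect this to be the only delicate point: one must check that the definition of $\overleftarrow X$ as bounded threads matches exactly the uniform bound required by the $\Ban_1$ universal property.

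For the functorial statement, let $T\in\mathcal L(X,Y)$. Its even duals $T^{(2n)}$ commute with the connecting maps by naturality, $J_{Y^{(2n)}}T^{(2n)}=T^{(2n+2)}J_{X^{(2n)}}$ (Definition \ref{def:chain}). Their norms equal $\|T\|$. So after rescaling by $\|T\|$, the universal property yields $\Dir(T):\widetilde X\to\widetilde Y$ with $\Dir(T)\kappa^X_{n,\infty}=\kappa^Y_{n,\infty}T^{(2n)}$. Dually, $\Inv(T)$ acts threadwise by $(\psi_n)\mapsto(T^{(2n+1)}\psi_n)$. Now take $\psi\in\widetilde Y^{\,*}$. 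The $n$-th coordinate of $\Phi_X(\Dir(T)^{*}\psi)$ is $\psi\circ\Dir(T)\circ\kappa^X_{n,\infty}=\psi\circ\kappa^Y_{n,\infty}\circ T^{(2n)}=(T^{(2n)})^{*}(\psi\circ\kappa^Y_{n,\infty})$. This is the $n$-th coordinate of $\Inv(T)\Phi_Y(\psi)$. Hence $\Phi_X\circ\Dir(T)^{*}=\Inv(T)\circ\Phi_Y$, which is the claimed identity under the identification $\Phi$.
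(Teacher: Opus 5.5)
Your proof is correct and follows essentially the paper's route: $\Phi$ is a contraction by restriction to the layers, and bounded compatible threads are extended from the dense union $U_X$. The differences are small. You obtain the extension from the universal property of Theorem \ref{thm:tower-exists} with $Y=\K$, and you prove $\|\varphi\|=\sup_n\|\varphi_n\|$ directly rather than exhibiting the contractive inverse $\Psi$. Your explicit check $\Phi_X\circ\Dir(T)^{*}=\Inv(T)\circ\Phi_Y$, via $J_{Y^{(2n)}}T^{(2n)}=T^{(2n+2)}J_{X^{(2n)}}$, supplies the operator-level details that the paper only cites from Gwizdek.
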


\begin{proof}
(The argument is taken from \cite[Theorem 2.9]{gwizdek2023}.)
Let $\varphi\in\widetilde X^{*}$ and put $\varphi_n:=\varphi\circ\kappa_{n,\infty}$. Since
$\kappa_{n+1,\infty}\circ\kappa_{n,n+1}=\kappa_{n,\infty}$, for any $x_n\in X^{(2n)}$ we have
\begin{align*}
  \bigl\langle\kappa^{*}_{n,n+1}(\varphi_{n+1}),\,x_n\bigr\rangle
  &=\bigl\langle\varphi_{n+1},\,\kappa_{n,n+1}(x_n)\bigr\rangle
  =\bigl\langle\varphi,\,\kappa_{n+1,\infty}\bigl(\kappa_{n,n+1}(x_n)\bigr)\bigr\rangle\\
  &=\bigl\langle\varphi,\,\kappa_{n,\infty}(x_n)\bigr\rangle
  =\langle\varphi_n,x_n\rangle,
\end{align*}
hence $\varphi_n=\kappa^{*}_{n,n+1}(\varphi_{n+1})$; also
$\|\varphi_n\|\le\|\varphi\|\cdot\|\kappa_{n,\infty}\|=\|\varphi\|$. Consequently
$\Phi(\varphi)\in\overleftarrow X$ and $\|\Phi(\varphi)\|\le\|\varphi\|$.

Conversely, let $\psi=(\psi_n)_{n\ge0}\in\overleftarrow X$ and define, on the dense subspace
$\bigcup_n\kappa_{n,\infty}\bigl(X^{(2n)}\bigr)$,
\[
  \Psi(\psi)\bigl(\kappa_{n,\infty}(x_n)\bigr):=\psi_n(x_n).
\]
By the compatibility $\psi_n=\kappa^{*}_{n,m}(\psi_m)$ ($n\le m$): if
$\kappa_{n,\infty}(x_n)=\kappa_{m,\infty}(x_m)$, we may assume $n\le m$, so that
$x_m=\kappa_{n,m}(x_n)$ and hence
$\psi_m(x_m)=\psi_m\bigl(\kappa_{n,m}(x_n)\bigr)
=\bigl(\kappa^{*}_{n,m}(\psi_m)\bigr)(x_n)=\psi_n(x_n)$;
thus $\Psi(\psi)$ is well defined. It is clearly linear. For $\psi$ in the inverse limit, the
induced functional on the algebraic union has norm at most $\|\psi\|$, and therefore extends
uniquely to a functional $\Psi(\psi)\in\widetilde X^{*}$ with $\|\Psi(\psi)\|\le\|\psi\|$. Thus
$\Psi$, as a map from the inverse limit to $\widetilde X^{*}$, is a contraction. Finally, $\Phi$
and $\Psi$ are mutually inverse and both are contractions, so
they give an isometric isomorphism $\widetilde X^{*}\equiv\overleftarrow X$. The assertion
about operators follows by applying the same argument level by level to $J_n(T)$; for the
details see \cite[Theorem 2.9]{gwizdek2023}.
\end{proof}

\begin{remark}[Comparison of terminology and notation]\label{rem:naming}
The two limit spaces are described differently in the literature; the following table
compares the terminology of this paper with that of \cite{gwizdek2023,gwizdek2024} (a
comparison with the terminology used in the three preprints is given in \S\ref{sec:wording}),
in order to avoid confusion.
\begin{center}
\begin{tabular}{p{0.46\textwidth}p{0.46\textwidth}}
\toprule
this paper & references \cite{gwizdek2023,gwizdek2024}\\
\midrule
tower system $\langle X^{(2n)},\kappa_{n,m}\rangle$ & tower system $\langle X_n,\kappa_{n,m}\rangle$\\
tower space $\widetilde X$ (hyperdual space) & $\Dir(X)$\\
inverse tower space $\overleftarrow X$ & $\Inv(X)$\\
\bottomrule
\end{tabular}
\end{center}
In the table, $\kappa_{n,m}$ denote the connecting maps of the tower system (Definition
\ref{def:tower-system}), and $\Dir$, $\Inv$ are the two limit functors introduced in the next
subsection.
\end{remark}

\begin{remark}[The third natural object: the direct limit of the odd chain]\label{rem:odd-direct}
Besides the two limits above, the odd chain of duals has a further natural \emph{direct} limit
\[
  \varinjlim_{n}X^{(2n+1)}=\widetilde{X^{*}},
\]
namely the tower space of $X^{*}$ (because $\bigl(X^{*}\bigr)^{(2n)}=X^{(2n+1)}$). Here the tilde is attached to $X^{*}$, whereas in the dual $\widetilde X^{*}$ of the tower space of $X$ it is attached to $X$; the two notations are kept apart throughout.
The direct system on the odd duals uses the canonical embeddings $J_{X^{(2n+1)}}$, whereas the
inverse system uses the adjoints of the even tower's connecting maps. These are two different
diagrams constructed from the same sequence of objects. The inverse-limit construction gives
$\overleftarrow X\equiv\widetilde X^{*}$ (Theorem \ref{thm:dir-dual-inv}), and this
identification is with $\widetilde X^{*}$, not with the direct limit $\widetilde{X^{*}}$ above.
The main objects of this paper are $\widetilde X$ and $\overleftarrow X$; the space
$\widetilde{X^{*}}$ is mentioned only when a comparison is needed, and no identification between
$\widetilde{X^{*}}$ and $\widetilde X^{*}$ is used here.
\end{remark}

\subsection{The limit functors and their properties}\label{sub:functors}

Both limit constructions can be promoted to functors, and they are ``conjugate'' to each other
in the sense of duality.

\begin{theorem}[The limit functors {\cite{gwizdek2023}}]\label{thm:functors}
For $T\in\mathcal{L}(X,Y)$ put $J_0(T):=T$ and $J_{n+1}(T):=J_n(T)^{**}$ ($n\ge0$), and define
\[
  \Dir(T):=\varinjlim_n J_n(T):\ \widetilde X=\varinjlim_n X^{(2n)}\longrightarrow
  \varinjlim_n Y^{(2n)}=\widetilde Y,
\]
\[
  \Inv(T):=\varprojlim_n J_n(T)^{*}:\ \overleftarrow Y=\varprojlim_n Y^{(2n+1)}
  \longrightarrow\varprojlim_n X^{(2n+1)}=\overleftarrow X .
\]
Then
\begin{enumerate}
  \item[(i)] $\Dir$ is a covariant functor on $\Ban$ and $\Inv$ is a contravariant functor
    on $\Ban$, and $\|\Dir(T)\|=\|T\|$, $\|\Inv(T)\|=\|T\|$;
  \item[(ii)] there exists a natural transformation $\eta:\mathrm{Id}\rightsquigarrow\Dir$
    whose components $\eta_X(x):=[(x,0)]$ are isometric embeddings, and
    $\Dir(T)\circ\eta_X=\eta_Y\circ T$;
  \item[(iii)] (interchange of limits and duality) $\Dir(T)^{*}=\Inv(T)$; in particular
    $\widetilde X^{*}\equiv\overleftarrow X$ (Theorem \ref{thm:dir-dual-inv});
  \item[(iv)] $\Dir(X)\equiv\Dir(X^{**})$ (isometrically isomorphic); and if $X$ is reflexive,
    then $\eta_X:X\to\widetilde X$ is an isometric isomorphism, i.e. $\widetilde X\equiv X$.
\end{enumerate}
\end{theorem}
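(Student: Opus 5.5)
The plan is to build everything from the explicit models: $\widetilde X$ as the completion of the isometric union (Lemma \ref{lem:isometric-colim}) and $\overleftarrow X$ as bounded compatible threads (Remark \ref{rem:inv-limit-form}). Each part then reduces to a level-by-level statement about the operators $J_n(T)$.

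\textbf{Step 1 (well-definedness and norms in (i)).} By induction, $\|J_n(T)\|=\|T\|$, since $\|S^{**}\|=\|S\|$ for every bounded $S$. Naturality of the canonical embedding (Definition \ref{def:chain}) gives $J_{Y^{(2n)}}\circ J_n(T)=J_n(T)^{**}\circ J_{X^{(2n)}}$. Hence
\[
\kappa^{Y}_{n,n+1}\circ J_n(T)=J_{n+1}(T)\circ\kappa^{X}_{n,n+1}.
\]
So the family $f_n:=\kappa^{Y}_{n,\infty}\circ J_n(T)$ is a compatible cocone with common bound $\|T\|$. If $T\neq0$, I apply Theorem \ref{thm:tower-exists} to $f_n/\|T\|$ and rescale; this produces $\Dir(T)$ with $\|\Dir(T)\|\le\|T\|$. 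Since $\Dir(T)\circ\kappa^X_{0,\infty}=\kappa^Y_{0,\infty}\circ T$ and the structure maps are isometric, we also get $\|\Dir(T)\|\ge\|T\|$. Functoriality, $\Dir(ST)=\Dir(S)\Dir(T)$ and $\Dir(\id)=\id$, follows from $J_n(ST)=J_n(S)J_n(T)$ together with the uniqueness clause, or from density of the union.

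For $\Inv$, I take $\Inv(T)(\psi)_n:=J_n(T)^{*}\psi_n$. Compatibility follows by dualizing the displayed commuting square, and $\sup_n\|J_n(T)^*\psi_n\|\le\|T\|\,\|\psi\|$.

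\textbf{Step 2 (parts (ii) and (iii)).} Part (ii) is exactly the $n=0$ case of the defining identity above, with $\eta_X=\kappa^X_{0,\infty}$ isometric by Lemma \ref{lem:isometric-colim}. For (iii), I compare $\Phi_X\circ\Dir(T)^{*}$ with $\Inv(T)\circ\Phi_Y$, where $\Phi$ is the isomorphism of Theorem \ref{thm:dir-dual-inv}. For $\varphi\in\widetilde Y^{*}$, the $n$-th coordinate of the first map is $\varphi\circ\Dir(T)\circ\kappa^X_{n,\infty}$. This equals $\varphi\circ\kappa^Y_{n,\infty}\circ J_n(T)=J_n(T)^*(\varphi\circ\kappa^Y_{n,\infty})$, which is the $n$-th coordinate of the second. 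The norm identity for $\Inv$ then follows from $\|\Dir(T)^*\|=\|\Dir(T)\|$.

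\textbf{Step 3 (part (iv)).} The tower system of $X^{**}$ is literally the shifted system $(X_{n+1},\kappa_{n+1,m+1})$, because $(X^{**})^{(2n)}=X^{(2n+2)}$ and the connecting maps are the same canonical embeddings. A cofinal shift does not change the union, so the maps $\kappa^X_{n+1,\infty}$ form an isometric cocone on that system. The induced map $\widetilde{X^{**}}\to\widetilde X$ is isometric with dense range, since the union of the $\kappa^X_{n,\infty}(X_n)$ for $n\ge1$ contains the whole union. It is therefore surjective. If $X$ is reflexive, every $\kappa_{n,n+1}$ is surjective by Theorem \ref{thm:chain} and Goldberg's theorem. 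Hence $U_X=\kappa_{0,\infty}(X)$, which is complete and therefore equals $\widetilde X$, and so $\eta_X$ is onto.

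\textbf{Main obstacle.} The delicate point is Step 1, where the universal property of Theorem \ref{thm:tower-exists} is stated only for contractions, while $\Dir$ is claimed to be a functor on all of $\Ban$. I would handle this by the rescaling argument above, using the uniform bound $\|J_n(T)\|=\|T\|$. This is exactly what escapes the counterexample in Remark \ref{rem:why-ban1}. The naturality identity $J_Y T=T^{**}J_X$, applied with $T$ replaced by $J_n(T)$, is what makes the cocone compatible, and I expect to verify it carefully once.
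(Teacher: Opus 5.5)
Your proposal is correct and follows essentially the same route as the paper: compatibility of $\kappa^{Y}_{n,\infty}J_n(T)$ via naturality of the canonical embedding, rescaling by the uniform bound $\|J_n(T)\|=\|T\|$ to use the contractive universal property, the $n=0$ factorization for both the lower norm bound and $\eta$, part (iii) via the levelwise identity $\varphi\circ\Dir(T)\circ\kappa^{X}_{n,\infty}=J_n(T)^{*}(\varphi\circ\kappa^{Y}_{n,\infty})$ (which you write out more explicitly than the paper's ``level by level''), and part (iv) by the index shift together with density plus completeness. The only slip is cosmetic: in the reflexive case, surjectivity of each $\kappa_{n,n+1}$ comes from repeated use of the forward direction of Goldberg's theorem, not from Theorem \ref{thm:chain}, which concerns the propagation of non-reflexivity.
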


\begin{proof}
(i), (ii): assume first that $\|T\|\le1$. The functoriality of $\Dir$ comes from the universal
property of the direct limit: given $S\in\mathcal{L}(Y,Z)$ and $T\in\mathcal{L}(X,Y)$, the
operators $\Dir(ST)$ and $\Dir(S)\Dir(T)$ both factor through $\kappa^{Z}_{n,\infty}\circ
J_n(ST)$, so their equality follows from the uniqueness part of the universal property; the
contravariance of $\Inv$ reverses the arrows, and the argument is the same. In the general case
one normalizes $T$ by $\|T\|$ and then rescales. Finally, from
$\Dir(T)\circ\kappa_{0,\infty}=\kappa_{0,\infty}\circ T$ and the fact that $\kappa_{0,\infty}$
is isometric we get $\|\Dir(T)\|\ge\|T\|$, while $\|\Dir(T)\|\le\|T\|$ is immediate from the
definition, so $\|\Dir(T)\|=\|T\|$; the other equality follows from (iii) together with
$\|\Dir(T)^{*}\|=\|\Dir(T)\|$.
The second identity of (ii) is precisely the naturality assertion, and it is checked by following
$x\mapsto[(x,0)]\mapsto[(Tx,0)]$; that $\eta_X$ is an isometric embedding follows from
$\|[(x,0)]\|=\|x\|$ (Lemma \ref{lem:isometric-colim}(ii)).

(iii): apply Theorem \ref{thm:dir-dual-inv} level by level to $J_n(T)$ (see the last paragraph
of the proof of that theorem).

(iv): the direct limit of a tower system depends only on the ``tail'' of the chain: shifting the
index by one gives an isometric isomorphism
$\varinjlim_nX^{(2n)}\equiv\varinjlim_nX^{(2n+2)}$, which is exactly
$\Dir(X)\equiv\Dir(X^{**})$. If $X$ is reflexive, then by Theorem \ref{thm:goldberg} and its
repeated use every $J_{X^{(2n)}}$ is a surjective isometry, so
$\kappa_{n,\infty}(X^{(2n)})=\kappa_{0,\infty}(X)$ holds for every $n$;
by Theorem \ref{thm:tower-exists}, $\kappa_{0,\infty}(X)$ is dense in $\widetilde X$, and since
it is complete ($X$ is complete and $\kappa_{0,\infty}$ is isometric) it is closed, hence equal
to $\widetilde X$, that is, $\eta_X$ is an isometric isomorphism.
\end{proof}

\begin{proposition}[The natural map between the even-order dual functors and the direct limit]\label{prop:colim-embed}
Let $(X_n,\kappa_{n,m})$ be an isometric direct system in $\iBan$ with direct limit
$X_\infty:=\varinjlim_nX_n$, and fix $k\ge0$. Then
$\bigl(D^{2k}[X_n],\,D^{2k}(\kappa_{n,m})\bigr)$ is again an isometric direct system in $\iBan$;
let $\lambda_n:D^{2k}[X_n]\to\varinjlim_nD^{2k}[X_n]$ denote its structure maps. There is a unique
isometric embedding
\[
  \iota_k:\ \varinjlim_n D^{2k}[X_n]\ \longrightarrow\
  D^{2k}\bigl(X_\infty\bigr)=D^{2k}\Bigl(\varinjlim_n X_n\Bigr),
\]
characterized by
\begin{equation}\label{eq:colim-embed-iota}
  \iota_k\bigl(\lambda_n x\bigr)=D^{2k}(\kappa_{n,\infty})\,x,
  \qquad x\in D^{2k}[X_n],
\end{equation}
where $\kappa_{n,\infty}:X_n\to X_\infty$ is the structure map of the direct limit $X_\infty$. For
$k\ge1$ the map $\iota_k$ need not be surjective, so the equality form of ``the even-order dual functor
commutes with the direct limit'' does not hold; for a counterexample see \S\ref{sec:cat}. For
$k=0$, the map $\iota_0:\varinjlim_nX_n\to X_\infty$ is the canonical identification of the direct
limit with itself, hence an isometric isomorphism.
\end{proposition}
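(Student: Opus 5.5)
The argument uses three facts: isometries are preserved by $D^{2}$, the colimit universal property in $\iBan$ (Lemma \ref{lem:isometric-colim}), and density. The statement splits into four parts: the direct system, the compatible cocone, the induced isometry, and the case $k=0$.

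\emph{Step 1: isometries are preserved by $D^{2}$.} Let $T:E\to F$ be an isometric linear map between Banach spaces. By Hahn--Banach every functional on $T(E)$ extends to $F$ with the same norm, so $T^{*}$ maps the closed unit ball of $F^{*}$ \emph{onto} the closed unit ball of $E^{*}$. Hence, for $e^{**}\in E^{**}$,
\[
\|T^{**}e^{**}\|=\sup_{\|f\|\le1}|e^{**}(T^{*}f)|=\sup_{\|g\|\le1}|e^{**}(g)|=\|e^{**}\|,
\]
so $T^{**}$ is isometric. Iterating gives that $D^{2k}(T)$ is isometric for every $k$. Functoriality of $D^{2k}$ (double adjoints respect composition and identities) turns the relations $\kappa_{n,m}$ satisfy into the same relations for $D^{2k}(\kappa_{n,m})$. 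Therefore $\bigl(D^{2k}[X_n],D^{2k}(\kappa_{n,m})\bigr)$ is an isometric direct system, and Lemma \ref{lem:isometric-colim} supplies its limit with structure maps $\lambda_n$.

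\emph{Step 2: a compatible isometric cocone.} Put $g_n:=D^{2k}(\kappa_{n,\infty}):D^{2k}[X_n]\to D^{2k}(X_\infty)$. Each $g_n$ is isometric by Step 1, since $\kappa_{n,\infty}$ is isometric. Applying $D^{2k}$ to $\kappa_{m,\infty}\kappa_{n,m}=\kappa_{n,\infty}$ gives $g_m\circ D^{2k}(\kappa_{n,m})=g_n$, so $(g_n)$ is a cocone.

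\emph{Step 3: existence and uniqueness of $\iota_k$.} The last paragraph of the proof of Lemma \ref{lem:isometric-colim} shows that the completed union is the colimit in $\iBan$ as well. So the universal property yields a unique contraction $\iota_k$ with $\iota_k\lambda_n=g_n$, which is exactly \eqref{eq:colim-embed-iota}. Because all the $g_n$ are isometries, the same paragraph shows $\iota_k$ is isometric: it preserves norms on the dense union $\bigcup_n\lambda_n(D^{2k}[X_n])$ and hence everywhere by continuity. Uniqueness follows from that density, since a bounded map is determined by its values on a dense set.

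\emph{Step 4: the case $k=0$, and the deferred claim.} For $k=0$, $D^{0}$ is the identity functor. Then $\lambda_n=\kappa_{n,\infty}$ and $\iota_0\kappa_{n,\infty}=\kappa_{n,\infty}$, so uniqueness forces $\iota_0=\id$. The claim that $\iota_k$ need not be surjective for $k\ge1$ is referred to \S\ref{sec:cat} and is not proved here. Its expected reason is that $D^{2}(X_\infty)$ contains weak$^*$-limits not approximable in norm by elements of the union; for example, with $X_n=(\K^n,\|\cdot\|_\infty)$ the limit of $\K^n=D^2(\K^n)$ is $c_0$, whereas $D^2(c_0)=\ell^\infty$.

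The \textbf{main obstacle} is Step 1, the isometry of $T^{**}$. It needs the surjectivity of $T^{*}$ onto the dual ball, which comes from Hahn--Banach; the remaining steps are formal.
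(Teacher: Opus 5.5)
Your proposal is correct and follows the same route as the paper: $D^{2k}$ preserves isometries, $\bigl(D^{2k}(\kappa_{n,\infty})\bigr)_n$ is a compatible cocone, the colimit universal property gives the map, and isometry on the dense union gives isometry everywhere. The only additions are your explicit Hahn--Banach proof that $T^{**}$ is isometric and your uniqueness argument forcing $\iota_0=\id$; the paper asserts both without proof.
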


\begin{proof}
Since $\kappa_{n,m}$ is isometric, so is $D^{2k}(\kappa_{n,m})$; hence
$\bigl(D^{2k}[X_n],D^{2k}(\kappa_{n,m})\bigr)$ is an isometric direct system and its direct limit
exists, with isometric structure maps $\lambda_n$ (Lemma \ref{lem:isometric-colim}).
By functoriality,
$D^{2k}(\kappa_{n,\infty})=D^{2k}(\kappa_{m,\infty})\circ D^{2k}(\kappa_{n,m})$ ($n\le m$), so
the family $\bigl(D^{2k}(\kappa_{n,\infty})\bigr)_n$ is compatible, and the universal property of
the direct limit yields a unique contraction $\iota_k$ satisfying \eqref{eq:colim-embed-iota}.
Finally, for $x\in D^{2k}[X_n]$, since $\kappa_{n,\infty}$ is isometric, $D^{2k}(\kappa_{n,\infty})$
is isometric as well (the even-order dual of an isometric embedding is again an isometric
embedding), so, by \eqref{eq:colim-embed-iota} and the isometry of $\lambda_n$,
\[
  \bigl\|\iota_k\bigl(\lambda_n x\bigr)\bigr\|
  =\bigl\|D^{2k}(\kappa_{n,\infty})(x)\bigr\|=\|x\|=\bigl\|\lambda_n x\bigr\|,
\]
i.e. $\iota_k$ is isometric on the dense subspace
$\bigcup_n\lambda_n\bigl(D^{2k}[X_n]\bigr)$, and hence is an isometric embedding.
\end{proof}

\begin{proposition}[Operator properties preserved by the limit {\cite{gwizdek2023}}]\label{prop:preserved}
Let $X$ be a Banach space.
\begin{enumerate}
  \item[(i)] (direct-sum decomposition) if $X=U\oplus V$ (a direct sum of closed subspaces),
    then $\widetilde X\cong\widetilde U\oplus\widetilde V$ (the direct sum of Banach spaces, a
    linear homeomorphism); by duality (Theorem \ref{thm:dir-dual-inv}), $\overleftarrow X$
    decomposes correspondingly as the direct sum of $\overleftarrow U$ and
    $\overleftarrow V$;
  \item[(ii)] (spectrum) for $T\in\mathcal{L}(X)$ one has
    $\sigma(T)=\sigma\bigl(\Dir(T)\bigr)=\sigma\bigl(\Inv(T)\bigr)$;
  \item[(iii)] (Fredholm property) if $X$ has the approximation property and
    $T\in\mathcal{L}(X)$ is a Fredholm operator, then $\Dir(T)$ and $\Inv(T)$ are Fredholm
    operators too, and their indices satisfy
    $i(T)=i\bigl(\Dir(T)\bigr)=-i\bigl(\Inv(T)\bigr)$.
\end{enumerate}
\end{proposition}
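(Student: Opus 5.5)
The plan is to derive all three assertions from the functoriality of $\Dir$ (Theorem \ref{thm:functors}(i)) and the duality $\Dir(T)^{*}=\Inv(T)$ (Theorem \ref{thm:dir-dual-inv}). First I would record that $\Dir$ is \emph{linear on morphisms}: $J_n(S+T)=J_n(S)+J_n(T)$ and $J_n(\lambda T)=\lambda J_n(T)$, because adjoints are linear. Hence $\Dir(S+T)$ and $\Dir(S)+\Dir(T)$ agree on the dense union $\bigcup_n\kappa_{n,\infty}(X^{(2n)})$, and so they are equal. Combined with functoriality and $\Dir(\id_X)=\id_{\widetilde X}$, this makes $\Dir$ an additive functor on $\Ban$.

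For (i), write $i_U,i_V$ for the inclusions of $U,V$ into $X$ and $p_U,p_V$ for the bounded projections, so that
\[
p_Ui_U=\id_U,\qquad p_Vi_V=\id_V,\qquad p_Ui_V=0,\qquad p_Vi_U=0,\qquad i_Up_U+i_Vp_V=\id_X .
\]
An additive functor preserves these identities. Therefore $\widetilde X\to\widetilde U\oplus\widetilde V$, $\xi\mapsto(\Dir(p_U)\xi,\Dir(p_V)\xi)$, is a bounded bijection with bounded inverse $(a,b)\mapsto\Dir(i_U)a+\Dir(i_V)b$. Dualizing with Theorem \ref{thm:dir-dual-inv}, or applying the contravariant additive functor $\Inv$ to the same identities, gives the decomposition of $\overleftarrow X$.

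For (ii), fix $\lambda$ and put $A=T-\lambda$; then $\Dir(A)=\Dir(T)-\lambda$. If $A$ is invertible, functoriality makes $\Dir(A^{-1})$ the inverse of $\Dir(A)$, so $\sigma(\Dir T)\subseteq\sigma(T)$. For the converse, suppose $\Dir(A)$ is invertible. From $\Dir(A)\kappa_{0,\infty}=\kappa_{0,\infty}A$ and the isometry of $\kappa_{0,\infty}$, $A$ is bounded below, so its range is closed. If $A$ were not surjective, there would be $0\ne f\in X^{*}$ with $A^{*}f=0$. I would lift $f$ to the thread $\varphi_0=f$, $\varphi_{n+1}=J_{X^{(2n+1)}}\varphi_n$. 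The standard identity $J_Z^{*}J_{Z^{*}}=\id_{Z^{*}}$ gives $\kappa_{n,n+1}^{*}\varphi_{n+1}=\varphi_n$, and all $\|\varphi_n\|=\|f\|$, so $(\varphi_n)$ lies in $\overleftarrow X$. Naturality $(S^{**})^{*}J_{Y^{*}}=J_{X^{*}}S^{*}$ applied inductively gives $J_n(A)^{*}\varphi_n=0$, so $(\varphi_n)$ is a nonzero element of $\ker\Inv(A)$. This contradicts the invertibility of $\Inv(A)=\Dir(A)^{*}$, so $A$ is invertible and $\sigma(T)=\sigma(\Dir T)$. Since $\Inv(T)=\Dir(T)^{*}$ and an operator and its adjoint have the same spectrum, $\sigma(\Inv T)=\sigma(\Dir T)$ as well.

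For (iii), I would use the Fredholm splittings $X=N\oplus X_1$ with $N=\ker T$ finite-dimensional, and $X=T(X)\oplus F$ with $\dim F=i$-codimension, where $T|_{X_1}:X_1\to T(X)$ is an isomorphism. By (i) and additivity, $\Dir(T)$ becomes the block operator
\[
\Dir(T)=\begin{pmatrix}0&0\\0&\Dir(T|_{X_1})\end{pmatrix}:\ \widetilde N\oplus\widetilde{X_1}\longrightarrow\widetilde F\oplus\widetilde{T(X)},
\]
whose nonzero block is invertible by functoriality. Finite-dimensional spaces are reflexive, so $\widetilde N\equiv N$ and $\widetilde F\equiv F$ by Theorem \ref{thm:functors}(iv). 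Hence $\Dir(T)$ is Fredholm with the same kernel dimension and codimension as $T$, i.e. $i(\Dir T)=i(T)$. Since $\Inv(T)=\Dir(T)^{*}$ and the adjoint of a Fredholm operator is Fredholm of opposite index, $i(\Inv T)=-i(T)$.

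I expect the main obstacle to be the surjectivity half of (ii): the lifted functional must be a bounded \emph{compatible} thread annihilated at every level, and this rests on checking the two canonical-embedding identities carefully. The approximation-property hypothesis does not seem to be needed in this splitting argument; if Gwizdek's route goes through a compact perturbation instead, that is where it would enter.
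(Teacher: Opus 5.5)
Your proof is correct. It cannot follow the paper's argument, because the paper gives none: Proposition \ref{prop:preserved} is simply quoted from Gwizdek \cite{gwizdek2023}.

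You derive all three parts from facts proved in Section \ref{sec:prelim}, together with one formal observation, the additivity of $\Dir$.
\begin{enumerate}
  \item[(i)] This is then the general fact that additive functors preserve biproducts.
  \item[(ii)] The inclusion $\sigma(\Dir(T))\subseteq\sigma(T)$ is formal. The converse needs a bounded-below argument plus surjectivity, and your thread $\varphi_{n+1}=J_{X^{(2n+1)}}\varphi_n$ handles surjectivity correctly. The identities $J_Z^{*}J_{Z^{*}}=\id_{Z^{*}}$ and $(S^{**})^{*}J_{Y^{*}}=J_{X^{*}}S^{*}$ really do make it a bounded, compatible, nonzero element of $\ker\Inv(A)$.
  \item[(iii)] This reduces to the factorization $T=i_{T(X)}\,(T|_{X_1})\,p_{X_1}$. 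Write it out explicitly, since it is what gives $\ker\Dir(T)=\Dir(i_N)(\widetilde N)$ and $R(\Dir(T))=\Dir(i_{T(X)})(\widetilde{T(X)})$, with closed complement $\Dir(i_F)(\widetilde F)$.
\end{enumerate}

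The identity $\Inv(A)=\Dir(A)^{*}$ that you rely on is stated for operators in Theorem \ref{thm:dir-dual-inv} only by reference. It is a one-line check: $\varphi\circ\Dir(A)\circ\kappa_{n,\infty}=(\varphi\circ\kappa_{n,\infty})\circ J_n(A)$, so $\Phi\circ\Dir(A)^{*}=\Inv(A)\circ\Phi$. Including that line makes your argument fully self-contained.

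Your route buys two things. First, it is independent of the external source. Second, it gives a stronger form of (iii). A finite-dimensional kernel and a closed finite-codimensional range are always complemented, so your splitting never uses the approximation property. Hence $\Dir(T)$ and $\Inv(T)$ are Fredholm with $i(T)=i(\Dir(T))=-i(\Inv(T))$ for every Banach space $X$. Whatever role that hypothesis plays in Gwizdek's proof, it is not needed for the statement as formulated here. The citation keeps the exposition short, at the cost of importing a hypothesis that your argument shows to be superfluous.
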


\begin{proposition}[Non-reflexivity does not disappear under the limit]\label{prop:limit-reflexive}
Let $X$ be a Banach space. Then
\[
  \widetilde X\ \text{is reflexive}\iff \overleftarrow X\ \text{is reflexive}\iff X\ \text{is reflexive}.
\]
Consequently, if $X$ is non-reflexive, then both limit spaces of the two dual chains are
non-reflexive.
\end{proposition}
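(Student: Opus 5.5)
The plan is to prove the two equivalences separately, using Goldberg's theorem (Theorem \ref{thm:goldberg}) for the link between $\widetilde X$ and $\overleftarrow X$, and subspace/quotient heredity for the link with $X$.

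\emph{Step 1: $\widetilde X$ reflexive $\iff$ $\overleftarrow X$ reflexive.} By Theorem \ref{thm:tower-exists}, $\widetilde X$ is a Banach space. By Theorem \ref{thm:dir-dual-inv}, $\overleftarrow X\equiv\widetilde X^{*}$ isometrically. Reflexivity is invariant under isometric isomorphism: if $S:Y\to Z$ is such an isomorphism, then $J_Z S=S^{**}J_Y$. Hence Theorem \ref{thm:goldberg} applied to $\widetilde X$ gives the first equivalence at once.

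\emph{Step 2: $X$ reflexive $\Rightarrow$ $\widetilde X$ reflexive.} This is Theorem \ref{thm:functors}(iv): $\eta_X:X\to\widetilde X$ is an isometric isomorphism, and reflexivity transfers as in Step 1.

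\emph{Step 3: $\widetilde X$ reflexive $\Rightarrow$ $X$ reflexive.} This is the main point. I would use the standard fact that a closed subspace of a reflexive Banach space is reflexive. The space $\kappa_{0,\infty}(X)$ is closed in $\widetilde X$, being an isometric image of a complete space. So reflexivity of $\widetilde X$ would force $X$ to be reflexive.

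To stay self-contained, the subspace heredity can be proved directly. Let $M\subseteq Z$ be closed with $Z$ reflexive, and let $m^{**}\in M^{**}$. Push it forward to $i^{**}m^{**}\in Z^{**}$, which equals $J_Z z$ for some $z\in Z$. If $z\notin M$, Hahn--Banach gives a functional vanishing on $M$ but not at $z$. Such a functional would pair to zero with $i^{**}m^{**}$, a contradiction, so $z\in M$. One then checks that $J_M z=m^{**}$, again via Hahn--Banach extension of functionals from $M$ to $Z$.

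An alternative for Step 3 avoids this heredity lemma. If $\widetilde X$ were reflexive, its closed subspace $\kappa_{1,\infty}(X^{**})\cong X^{**}$ would be reflexive, and Corollary \ref{cor:one-reflexive} would then give that $X$ is reflexive. This still needs subspace heredity, so the heredity lemma is the one real ingredient beyond results already stated.

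The final sentence of the proposition is then the contrapositive of the combined equivalences.
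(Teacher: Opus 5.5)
Your proposal is correct and follows essentially the same route as the paper. Both arguments use Theorem~\ref{thm:functors}(iv) for ``$X$ reflexive $\Rightarrow$ $\widetilde X$ reflexive'', heredity of reflexivity to the closed subspace $\kappa_{0,\infty}(X)$ for the converse, and Theorem~\ref{thm:dir-dual-inv} together with Goldberg's theorem (Theorem~\ref{thm:goldberg}) for $\overleftarrow X$. Your self-contained proof of subspace heredity, and your explicit remark that reflexivity transfers across isometric isomorphisms via $J_Z S=S^{**}J_Y$, only fill in steps the paper takes for granted.
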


\begin{proof}
Consider first $\widetilde X$. ``$\Longleftarrow$'': if $X$ is reflexive, then by
Theorem \ref{thm:goldberg} and its repeated use every even-order dual of $X$ is reflexive and
every canonical embedding $J_{X^{(2n)}}$ is a surjective isometry, so all the connecting maps
of the tower system are surjective, and hence $\widetilde X\equiv X$ is reflexive
(Theorem \ref{thm:functors}(iv)).
``$\Longrightarrow$'': if $\widetilde X$ is reflexive, then by Theorem \ref{thm:tower-exists},
$\kappa_{0,\infty}$ embeds $X$ isometrically into $\widetilde X$, and its image
$\kappa_{0,\infty}(X)$ is a closed subspace of $\widetilde X$ ($X$ is complete and the image
under an isometry is complete); a closed subspace of a reflexive Banach space is reflexive, so
$\kappa_{0,\infty}(X)$ is reflexive, and hence $X$ is reflexive.

Consider now $\overleftarrow X$: by Theorem \ref{thm:dir-dual-inv} we have
$\overleftarrow X\equiv\widetilde X^{*}$, and by Theorem \ref{thm:goldberg} ($X$ reflexive
$\iff$ $X^{*}$ reflexive) together with the previous paragraph, $\widetilde X^{*}$ is reflexive
if and only if $\widetilde X$ is reflexive, if and only if $X$ is reflexive.
\end{proof}

\begin{remark}\label{rem:limit-vs-chain}
Proposition \ref{prop:limit-reflexive} shows that the phenomenon established in this section --- that
the dual chain never ``stabilizes'' --- persists after passing to the limit: even if the whole
chain is glued into a single space, no reflexive space arises. This is contrary to the naive
expectation (``the limit should be closer to being reflexive than each of its members''), but
the reason is simple: non-reflexivity is ``inherited'' by the tower space from $X$ itself ($X$
is a closed subspace of it), and reflexivity is precisely what is inherited by closed subspaces.
Likewise, $\overleftarrow X$, being the dual of $\widetilde X$, is non-reflexive as well.
\end{remark}

\begin{proposition}[Separability]\label{prop:limit-separable}
Let $X$ be a Banach space. If every $X^{(2n)}$ ($n\ge0$) is separable, then $\widetilde X$ is
separable.
\end{proposition}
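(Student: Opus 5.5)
The plan is to produce an explicit countable dense subset of $\widetilde X$ from countable dense subsets of the layers. By Theorem \ref{thm:tower-exists}, the algebraic union
\[
U_X=\bigcup_{n\ge0}\kappa_{n,\infty}\bigl(X^{(2n)}\bigr)
\]
is dense in $\widetilde X$. Since density is transitive, it suffices to find a countable subset of $U_X$ that is dense in $U_X$ for the norm of $\widetilde X$.

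First, for each $n\ge0$ I fix a countable dense subset $A_n\subset X^{(2n)}$; this exists by hypothesis. Then I put
\[
A:=\bigcup_{n\ge0}\kappa_{n,\infty}(A_n)\subset\widetilde X .
\]
This is a countable union of countable sets, hence countable.

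Second, I check that $A$ is dense in $U_X$. Take $u\in U_X$ and $\varepsilon>0$, and write $u=\kappa_{n,\infty}(x_n)$ with $x_n\in X^{(2n)}$. Choose $a\in A_n$ with $\|x_n-a\|<\varepsilon$. By Lemma \ref{lem:isometric-colim}(ii), the structure map $\kappa_{n,\infty}$ is a linear isometry, so
\[
\|u-\kappa_{n,\infty}(a)\|=\|x_n-a\|<\varepsilon .
\]

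Third, I conclude. The closure of $A$ contains $U_X$, and $U_X$ is dense in $\widetilde X$, so $A$ is dense in $\widetilde X$. Hence $\widetilde X$ is separable.

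There is no real obstacle here. The only point requiring care is that each $x_n$ is approximated inside its own layer and then pushed forward isometrically, so no uniformity across layers is needed. It is also worth remarking that the hypothesis is not vacuous to verify: if $X^{**}$ is separable then $X^{*}$ is separable, but separability of $X$ alone does not suffice (take $X=c_0$, where $c_0^{**}=\ell^\infty$). Conversely, separability of $\widetilde X$ forces every layer to be separable, since each layer embeds isometrically in $\widetilde X$ and subspaces of separable metric spaces are separable. So the condition is in fact an equivalence.
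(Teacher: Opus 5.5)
Your proof is correct and follows the same route as the paper: take countable dense subsets of the layers, push them into $\widetilde X$ by the isometric structure maps $\kappa_{n,\infty}$, and use that the union of the layers is dense in $\widetilde X$ (Theorem \ref{thm:tower-exists}). Your closing remark is also correct and goes slightly beyond the paper's statement. Since each layer embeds isometrically in $\widetilde X$, separability of $\widetilde X$ is in fact equivalent to separability of all the $X^{(2n)}$.
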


\begin{proof}
By Theorem \ref{thm:tower-exists}, $\bigcup_{n\ge0}\kappa_{n,\infty}\bigl(X^{(2n)}\bigr)$ is
dense in $\widetilde X$; and each $\kappa_{n,\infty}\bigl(X^{(2n)}\bigr)$ is an isometric image
of a separable space, hence separable, so it contains a countable dense subset $D_n$. Then
$D:=\bigcup_nD_n$ is countable and dense in
$\bigcup_n\kappa_{n,\infty}(X^{(2n)})$; the latter is dense in $\widetilde X$, so $D$ is dense in
$\widetilde X$.
\end{proof}

\begin{remark}\label{rem:separable-caveat}
No equally simple assertion can be made for the inverse tower space $\overleftarrow X$. On the
one hand $\overleftarrow X\equiv\widetilde X^{*}$, and the dual of a separable Banach space need
not be separable (for instance $\ell^{1}$ is separable while $(\ell^{1})^{*}=\ell^{\infty}$ is
not); on the other hand, even if every level of the inverse system is separable and all the
connecting maps are contractions, the inverse limit may still fail to be separable: take
$X_n=(\C^{n},\|\cdot\|_\infty)$ with the supremum norm
$\|(\xi_1,\dots,\xi_n)\|_\infty=\max_{1\le j\le n}|\xi_j|$, and let
$\rho_{n,m}:(\C^{m},\|\cdot\|_\infty)\to(\C^{n},\|\cdot\|_\infty)$ be the \emph{truncation}
$\rho_{n,m}(\xi_1,\dots,\xi_m)=(\xi_1,\dots,\xi_n)$; then the inverse limit
consists of the compatible families of the form $x_n=(\xi_1,\dots,\xi_n)$ (the first $n$ terms of
the same sequence $\xi$), so
\[
  \varprojlim_n(\C^{n},\|\cdot\|_\infty)\equiv\ell^{\infty},
\]
which is not separable. Thus Proposition \ref{prop:limit-separable} holds only for
$\widetilde X$; as for $\overleftarrow X$, this paper only points out that it is dual to
$\widetilde X$ (Theorem \ref{thm:dir-dual-inv}) and makes no general assertion about its
separability.
\end{remark}

\subsection{The James space: the concrete shape of the limits}\label{sub:james}

The real James space $J$ is quasi-reflexive of order one:
\[
  \dim\bigl(J^{**}\big/J_J(J)\bigr)=1 .
\]
It is non-reflexive and separable, isometrically isomorphic to its bidual, and all its
finite iterated duals are separable; the ``deficiency'' $\dim J^{**}\big/J_J(J)=1$ is minimal
(\cite[Example 2.3]{gwizdek2023}). We use these properties directly, without introducing an
additional meaning of the term ``almost reflexive''. For this
very reason the tower system of $J$ can be computed completely, and it represents the
``slowest-growing'' end of the dual chain.

\begin{theorem}[The tower and the tower space of the James space {\cite[Example 2.3]{gwizdek2023}}]\label{thm:james-tower}
Write $J_n:=J^{(2n)}$ ($n\ge0$), so that $J_0=J$ and the connecting maps are
$\iota_n:=\kappa_{n,n+1}=J_{J_n}:J_n\to J_{n+1}$. Up to a linear homeomorphism of each level, and with all direct sums below understood as
topological direct sums, the tower system of the James space is given by
\[
  J_1=J^{**}=J\oplus\R u_1,\quad
  J_2=J_1^{**}=J_1\oplus\R u_2=J\oplus\R u_1\oplus\R u_2,\quad\ldots,\quad
  J_n=J\oplus\R u_1\oplus\cdots\oplus\R u_n,
\]
where (inside each $J_n$) the first direct summand is the canonical image of $J$. Hence
\[
  \widetilde J=\Dir(J)=\kappa_{0,\infty}(J)\oplus R,\qquad
  R=\overline{\Span\{u_k:k\ge1\}},
\]
where $R$ is the completion of the direct limit of the linear spans of $\{u_k:k\ge1\}$ under the
layer norms $\|\cdot\|_{J_n}$, that is, the norm of a vector $\lambda_1u_1+\cdots+\lambda_nu_n$
is computed inside $J_n$. The first summand $\kappa_{0,\infty}(J)$ is the range of a bounded
projection of $\widetilde J$; in particular $\widetilde J$ is separable.
\end{theorem}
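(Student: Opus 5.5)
The plan is to build the decomposition layer by layer from quasi-reflexivity. First I establish the basic step for every $n\ge0$:
\[
  J_{n+1}=\iota_n(J_n)\oplus\R u_{n+1},
\]
a topological direct sum with $u_{n+1}\in J_{n+1}\setminus\iota_n(J_n)$. For $n=0$ this is quasi-reflexivity of order one. The subspace $J_J(J)$ is closed (Lemma \ref{lem:isometry}) and has codimension one, so any $u_1\notin J_J(J)$ spans a complement. Every closed subspace of finite codimension is complemented, since the projection along the one-dimensional complement is bounded; concretely, take a functional $g\in J^{***}$ vanishing on $J_J(J)$ with $g(u_1)=1$, as in the proof of Theorem \ref{thm:goldberg}, and use $P=\id-g(\cdot)u_1$. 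For general $n$ I argue by induction that $J_n$ is quasi-reflexive of order one. This follows from the standard fact that $\dim X^{**}/J_X(X)=\dim X^{(4)}/J_{X^{**}}(X^{**})$ for finite codimension: dualize $0\to X\to X^{**}\to F\to0$ twice, using that duals and biduals of finite-dimensional $F$ are again $F$. Alternatively, I can invoke $J\equiv J^{**}$ from Example \ref{ex:james}: an isometric isomorphism $T:J\to J^{**}$ satisfies $T^{**}J_J=J_{J^{**}}T$ (the naturality in Definition \ref{def:chain}), so $T^{**}$ carries $J_J(J)$ onto $J_{J^{**}}(J^{**})$. Iterating gives codimension one at every level. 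Composing these identifications yields $J_n=J\oplus\R u_1\oplus\cdots\oplus\R u_n$, up to the linear homeomorphism at each level, with $J$ the canonical image.

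Next I pass to the limit. Let $P_n:J_n\to J_n$ be the projection onto $\kappa_{0,n}(J)$ along $\Span\{u_1,\dots,u_n\}$. The main obstacle is compatibility together with a uniform bound: I need $P_{n+1}\iota_n=\iota_nP_n$ and $\sup_n\|P_n\|<\infty$. Without the uniform bound the projections do not extend to $\widetilde J$; this is exactly the phenomenon of \eqref{eq:no-uniform-bound}. Compatibility can be forced by defining $P_{n+1}$ to extend $\iota_nP_n$ and to kill $u_{n+1}$. The norm, however, may grow: $\|P_{n+1}\|\le\|P_n\|(1+\|g_{n+1}\|\|u_{n+1}\|)$.

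To get a uniform bound I would choose the $u_k$ cleverly rather than arbitrarily. The first attempt is to use the canonical retraction $\kappa_{0,1}^{**}$-type maps. Take $r_n:=D(J_{J_{n-1}^{*}})\colon J_n\to J_{n-1}$, the adjoint of the canonical embedding of the predual, which is a norm-one retraction: $r_n\iota_{n-1}=\id$ (Dixmier's projection). Then set $u_n\in\Ker r_n$. With this choice I define $P_n=\kappa_{0,n}\,r_1r_2\cdots r_n$. Each $P_n$ has norm $\le1$, and the $P_n$ are compatible because $r_{n+1}\iota_n=\id$. Moreover $\Ker P_n$ is spanned by the images of the $u_k$; this holds since each $\Ker r_k$ is one-dimensional, by the codimension count. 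Hence the $P_n$ assemble, via the universal property in Theorem \ref{thm:tower-exists} applied to the compatible contractions $\kappa_{0,\infty}r_1\cdots r_n$, into a contractive projection $P$ on $\widetilde J$. Its range is $\kappa_{0,\infty}(J)$: the range contains this space and is contained in its closure, which is closed. Its kernel is $\overline{\bigcup_n\ker}$, namely $R$, the closed span of the $u_k$ normed inside the layers.

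Finally, $\widetilde J=\kappa_{0,\infty}(J)\oplus R$ follows from $\id=P+(\id-P)$. Separability follows either from Proposition \ref{prop:limit-separable}, since all even duals are separable by Example \ref{ex:james}(3), or directly, since $J$ is separable and $R$ has a countable spanning set.
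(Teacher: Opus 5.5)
Your route is the paper's: codimension one at every level, Dixmier retractions $(J_F)^{*}$ on the dual layers, compatible composites $J_n\to J_0$ with a uniform bound, extension to $\widetilde J$, and identification of the kernel. The gap is at the bottom of the tower, and an off-by-one in your definition of $r_n$ hides it. With $J_n=J^{(2n)}$ one has $J_{n-1}^{*}=J^{(2n-1)}$, which is the predual of $J_n$, not of $J_{n-1}$. Hence $D(J_{J_{n-1}^{*}})=(J_{J^{(2n-1)}})^{*}$ maps $J^{(2n+2)}=J_{n+1}$ to $J_n$ and retracts $\iota_n$ (it is the paper's $\rho_n$), not $\iota_{n-1}$. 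Suppose instead you keep the declared type $r_n:J_n\to J_{n-1}$, so that $r_1\cdots r_n$ lands in $J_0$. Then $r_1:J^{**}\to J$ would have to be the adjoint of the canonical embedding of a predual of $J_0=J$, and the tower supplies none. So ``each $P_n$ has norm $\le1$'' and ``$P$ is contractive'' are unproved. The first factor is exactly the one retraction that is not a Dixmier projection, which is why the paper treats $\rho_0$ separately.

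The repair is short, and there are two options.
\begin{itemize}
\item Take $r_1$ to be any bounded left inverse of $J_J$. Your first step already gives one, $J_J^{-1}\bigl(\id-g(\cdot)u_1\bigr)$, whose kernel is $\R u_1$. Then $\sup_n\|P_n\|\le\max\{1,\|r_1\|\}$, which is the paper's constant $C_0$. This yields the bounded projection the theorem asserts. Extend by density or rescale, since the universal property in Theorem~\ref{thm:tower-exists} is stated for contractions.
\item To get the contraction you claim, use $J\equiv J^{**}$. Let $S:J^{**}\to J$ be a surjective isometry. Naturality gives $J_JS=S^{**}J_{J^{**}}$, and $(J_{J^{*}})^{*}J_{J^{**}}=\id_{J^{**}}$. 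Together these show that $S\,(J_{J^{*}})^{*}\,(S^{**})^{-1}$ is a norm-one left inverse of $J_J$.
\end{itemize}

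There are also two smaller points.
\begin{itemize}
\item Dualizing $0\to X\to X^{**}\to F\to0$ twice computes the codimension of $J_X^{**}(X^{**})$, not of $J_{X^{**}}(X^{**})$. These subspaces differ in general (see the $c_0$ example in \S\ref{sub:cat-correct}). Your second argument, via $T^{**}J_J=J_{J^{**}}T$, is the one that actually works.
\item The identification $\Ker P=R$ needs the approximation step. We have $\Ker P=(\id-P)(\widetilde J)$, and $\id-P$ maps the dense union of the layers onto $\bigcup_n\kappa_{n,\infty}(\Ker P_n)$. That union is therefore dense in $\Ker P$.
\end{itemize}
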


\begin{proof}
Recall that $J$ is a \emph{real} Banach space (Example \ref{ex:james}), so all spaces below are
real and all operators are real-linear. $J$ is separable and $\kappa_{0,1}(J)=J_J(J)$ is a
closed subspace of $J^{**}=J_1$ of codimension $1$ (Example \ref{ex:james}(2)(3) and Definition
\ref{def:reflexive}). A closed subspace of finite codimension in a Banach space is complemented
by a bounded projection; equivalently, there is a bounded left inverse
$\rho_0:J_1\to J_0=J$ of $\iota_0$, that is, $\rho_0\iota_0=\id_{J_0}$, and we fix one. Choose a
nonzero $u_1\in\ker\rho_0$, so that
\[
  J_1=\kappa_{0,1}(J_0)\oplus\ker\rho_0=\kappa_{0,1}(J_0)\oplus\R u_1 .
\]
For $n\ge1$ the space $J_n$ is itself a dual space, with predual $J^{(2n-1)}$
($J_n=(J^{(2n-1)})^{*}$), and $\iota_n=J_{J_n}$ is the canonical embedding of a dual space into
its second dual. Hence the standard retraction
\[
  \rho_n:=\bigl(J_{J^{(2n-1)}}\bigr)^{*}:J_{n+1}=(J_n)^{**}\longrightarrow J_n
\]
satisfies $\rho_n\iota_n=\id_{J_n}$ and $\|\rho_n\|=1$; the identity used here is
$J_F^{*}J_{F^*}=\id_{F^*}$, with $F:=J^{(2n-1)}$, $F^{*}=J^{(2n)}=J_n$. Since $J_n\equiv J$ for
every $n$ (Example \ref{ex:james}(3)) and $\dim J^{**}\big/J_J(J)=1$, each $J_{n+1}$ is a
codimension-one extension of $J_n$, so $\ker\rho_n$ is one-dimensional; choose a nonzero
$u_{n+1}\in\ker\rho_n$, so that
\[
  J_{n+1}=\iota_n(J_n)\oplus\R u_{n+1}.
\]
Inductively, identifying $u_k$ ($k\le n$) with its image $\kappa_{k,n}(u_k)\in J_n$,
\[
  J_n=\kappa_{0,n}(J_0)\oplus\R u_1\oplus\cdots\oplus\R u_n\qquad(n\ge0),
\]
which is the asserted decomposition of the tower system.

Set $q_0:=\id_{J_0}$ and $q_n:=\rho_0\rho_1\cdots\rho_{n-1}:J_n\to J_0$ for $n\ge1$. Then
$q_n\kappa_{0,n}=\id_{J_0}$, while the compatibility
\[
  q_{n+1}\iota_n=\rho_0\rho_1\cdots\rho_{n-1}(\rho_n\iota_n)=q_n
\]
holds because $\rho_n\iota_n=\id_{J_n}$; and
\[
  \|q_n\|\le\|\rho_0\|\prod_{k=1}^{n-1}\|\rho_k\|=\|\rho_0\|\quad(n\ge1),\qquad\|q_0\|=1 .
\]
Thus the family $(q_n)$ is compatible and uniformly bounded, with $\sup_n\|q_n\|\le C_0$, where
$C_0:=\max\{1,\|\rho_0\|\}<\infty$. By compatibility the formula
$q\bigl(\kappa_{n,\infty}(x_n)\bigr):=q_n(x_n)$ is well defined on the dense subspace
$\bigcup_n\kappa_{n,\infty}(J_n)$ of $\widetilde J$: if
$\kappa_{n,\infty}(x_n)=\kappa_{m,\infty}(x_m)$ with $n\le m$, then $x_m=\kappa_{n,m}(x_n)$ and
the compatibility gives $q_m(x_m)=q_n(x_n)$. It is linear and bounded by $C_0$, hence extends
uniquely to a bounded linear $q:\widetilde J\to J_0$ with $\|q\|\le C_0$. Since
$q\kappa_{0,\infty}=q_0=\id_{J_0}$, the map
\[
  P:=\kappa_{0,\infty}q:\widetilde J\longrightarrow\widetilde J
\]
is a bounded projection: $P^2=\kappa_{0,\infty}(q\kappa_{0,\infty})q=\kappa_{0,\infty}q=P$. Its
range is $\kappa_{0,\infty}(J_0)$ and its kernel is $\ker P=\ker q$ (here we use that
$\kappa_{0,\infty}$ is injective). Therefore
\[
  \widetilde J=\kappa_{0,\infty}(J)\oplus\ker q
\]
is a topological direct sum. This is the analytic step that upgrades the levelwise
decompositions to the limit, and it is precisely what a naive ``glue the complements together''
argument omits: the boundedness of $P$ requires a \emph{uniform} bound on the $q_n$.

It remains to identify $\ker q$ with $R$. From $q_n=q_{n-1}\rho_{n-1}$ and
$J_n=\iota_{n-1}(J_{n-1})\oplus\R u_n$ (with $u_n\in\ker\rho_{n-1}$) we get
$\ker q_n=\iota_{n-1}(\ker q_{n-1})\oplus\R u_n$, hence
$\ker q_n=\R u_1\oplus\cdots\oplus\R u_n$. Consequently
$\ker q\cap\kappa_{n,\infty}(J_n)=\kappa_{n,\infty}(\ker q_n)\subset\Span\{u_1,\dots,u_n\}$,
which gives $\Span\{u_k:k\ge1\}\subset\ker q$; since $\ker q$ is closed, also
$\overline{\Span\{u_k:k\ge1\}}\subset\ker q$. Conversely, let $z\in\ker q$ and choose $x_m$ in the union of the layers with $x_m\to z$. For
each $m$, choose $n_m$ with $x_m\in\kappa_{n_m,\infty}(J_{n_m})$ and put
$y_m:=(\id_{\widetilde J}-P)x_m$. Since
$Px_m\in\kappa_{0,\infty}(J)\subseteq\kappa_{n_m,\infty}(J_{n_m})$, we have
\[
  y_m\in\kappa_{n_m,\infty}(\ker q_{n_m})\subseteq\Span\{u_1,\ldots,u_{n_m}\};
\]
moreover
\[
  \|y_m-z\|=\bigl\|(\id_{\widetilde J}-P)(x_m-z)\bigr\|\le(1+\|P\|)\,\|x_m-z\|\longrightarrow0 .
\]
Thus $z\in\overline{\Span\{u_k:k\ge1\}}$, so $\ker q=R$, which completes the description of the
complement.

Separability: each $J_n$ is separable (Example \ref{ex:james}(3): all the conjugate spaces of
$J$ are separable), so the dense subspace $\bigcup_n\kappa_{n,\infty}(J_n)$ of Lemma
\ref{lem:isometric-colim}(iii) is a union of countably many separable spaces, and hence
$\widetilde J$ is separable.
\end{proof}

By Proposition \ref{prop:limit-reflexive}, both $\widetilde J$ and
$\overleftarrow J\equiv\widetilde J^{*}$ are non-reflexive. The James space thus ties together
the two threads of this section:

(i) the dual chain itself never ``stabilizes'' (\S\ref{sub:goldberg-prelim}--\S\ref{sub:examples-prelim}):
every higher-order dual of $J$ is non-reflexive;

(ii) passing to the direct limit produces no reflexive space either (Proposition
\ref{prop:limit-reflexive}): both $\widetilde J$ and $\overleftarrow J$ are non-reflexive;

(iii) adjacent layers are even \emph{isometrically} isomorphic ($J\equiv J^{**}$), yet the tower
system uses the \emph{canonical} embedding, and each step upward adds one new direction: since
$u_1\notin\kappa_{0,1}(J)$, its image $\kappa_{1,\infty}(u_1)$ lies outside
$\kappa_{0,\infty}(J)$. Thus $\kappa_{0,\infty}(J)$ is a proper subspace of $\widetilde J$. Both spaces are
nevertheless infinite-dimensional and separable, and hence both have Hamel dimension $\cc$;
in particular the proper inclusion must not be read as ``a larger Hamel dimension'' (see
\S\ref{sub:dim-basic}).

This shows that ``$X$ is isometrically isomorphic to $X^{**}$'' does not mean that
non-reflexivity is ``smoothed out'' along the chain --- the distinction emphasized in this
section between the ``canonical embedding'' and ``the existence of some isometric isomorphism''
remains crucial at the level of limits as well.
On the other hand, the example of $J$ also shows that a tower space may grow ``only a little'':
$\widetilde J$ is merely the direct sum of $J$ and a separable space $R$, and by Proposition
\ref{prop:limit-separable} (all conjugate spaces of $J$ are separable) it is still separable.

\begin{remark}[The other extreme: $C(K)$]\label{rem:CK}
The extreme opposite of the James space is the space of continuous functions. By
\cite[Example 2.4]{gwizdek2023}, for an infinite compact Hausdorff space $K$ every level of the
tower system of $C(K)$ is again the space of continuous functions on some (successively larger)
compact Hausdorff space (this is precisely Dixmier's classical correspondence
$C(K)^{**}\equiv C(K')$, see \cite{dixmier1951}; for finer descriptions of the elements of
$C(X)^{**}$ and of the ideal structure see \cite{kaplan1959}). To describe the limits correctly,
however, one must keep track of the connecting maps between the layers. Write
$A_n:=C(K)^{(2n)}$ and fix an isometric identification $A_n\equiv C(K_n)$, where $K_n$ denotes
the Gelfand spectrum of the commutative unital $C^{*}$-algebra $A_n$. The canonical embedding
$\iota_n:=\kappa_{n,n+1}:A_n\to A_{n+1}$ is a unital injective $*$-homomorphism, so under
Gelfand duality it corresponds to a \emph{continuous surjection} $p_n:K_{n+1}\to K_n$, and on
functions the embedding is the pull-back
\[
  \iota_n(f)=f\circ p_n .
\]
The arrows on function algebras and on their spectra thus point in opposite directions. Hence the
tower space of $C(K)$ is the space of continuous functions on the \emph{inverse limit} of the
spectra along the connecting maps, and \emph{not} on the Stone--\v{C}ech compactification of a
disjoint union:
\[
  \widetilde{C(K)}=\Dir\bigl(C(K)\bigr)\equiv C(K_\infty),\qquad
  K_\infty=\varprojlim(K_n,p_n).
\]
Indeed, if $\pi_n:K_\infty\to K_n$ denotes the corresponding coordinate projection, then each $\pi_n$ is
surjective (because the $p_n$ are), so $f\mapsto f\circ\pi_n$ is an isometric embedding of
$C(K_n)$ into $C(K_\infty)$, and the images form an increasing subalgebra
$\mathcal A:=\bigcup_n\pi_n^{*}C(K_n)\subset C(K_\infty)$ that contains the constants, is closed
under complex conjugation, and separates the points of $K_\infty$: two distinct compatible
sequences differ in some coordinate, and on the corresponding compact space the two values are
separated by a continuous function. By the Stone--Weierstrass theorem
$\overline{\mathcal A}=C(K_\infty)$, and this is exactly the completed direct limit of the tower.

For comparison, the operation produced by the description above, namely the
Stone--\v{C}ech compactification of the disjoint union of the layers, corresponds instead to
choosing a function on each layer \emph{independently}:
\[
  C\Bigl(\beta\Bigl(\textstyle\bigsqcup_n K_n\Bigr)\Bigr)
  \equiv\Bigl\{(f_n):f_n\in C(K_n),\ \sup_n\|f_n\|<\infty\Bigr\},
\]
a bounded product in which the identifications $\iota_n$ between the layers play no role. The
simplest example is the constant system $\C\to\C\to\cdots$: its direct limit is $\C$, whereas the
disjoint union of the one-point spectra is $\N$ and its Stone--\v{C}ech compactification gives
$C(\beta\N)=\ell^{\infty}$.

Finally, the tower space of $C(K)$ is still far from being separable when $K$ is infinite, and this
conclusion has a direct proof that does not use the above description of the underlying space.
Choose countably many distinct points $t_1,t_2,\dots\in K$ and put
$Y:=\overline{\Span}\{\delta_{t_1},\delta_{t_2},\dots\}\subset C(K)^{*}$; choosing a continuous
function of sup-norm $1$ with prescribed values at finitely many distinct points shows that
$Y$ is isometrically isomorphic to $\ell^1$. The map $P\mu:=\sum_{i\ge1}\mu(\{t_i\})\delta_{t_i}$ is a
projection of $C(K)^{*}$ onto $Y$ of norm $1$ (indeed $P\delta_{t_i}=\delta_{t_i}$ and
$\|P\mu\|\le\sum_i|\mu(\{t_i\})|\le\|\mu\|$), so passing to adjoints exhibits an isometric,
complemented copy of $\ell^{\infty}$ inside $C(K)^{**}\equiv C(K')$. Since $C(K)^{**}$ is a
level of the tower and embeds isometrically into $\widetilde{C(K)}$, the latter is non-separable.
Thus $J$ and $C(K)$ represent, respectively, the ``slowest'' and the ``fastest'' growth
patterns of the dual chain (Chapter 3 of \cite{gwizdek2024} discusses this in
detail).
\end{remark}

\subsection{\texorpdfstring{Basic facts about the Hamel dimension of (hyper)dual spaces}
{Basic facts about the Hamel dimension of (hyper)dual spaces}}\label{sub:dim-basic}

The preceding subsections dealt with the ``shape'' of the dual chain (the two limit spaces and
the limit functors). This subsection deals with one of its quantitative aspects: how the
\emph{Hamel dimension (algebraic dimension)} changes under repeated dualization. In what
follows $\dim E$ always denotes the Hamel dimension of the linear space $E$, and we write
$\mathfrak c=2^{\aleph_0}$. This subsection collects only the \emph{basic} facts that are actually
used later; the dimension comparison theorem and its relation to the assertions in the
literature are postponed to \S\ref{sec:dim}.

\begin{lemma}[Upper bound for the dimension]\label{lem:dim-bound}
Let $X$ be an infinite-dimensional normed space. Then
\begin{equation}\label{eq:dim-bound}
  \dim X^{*}\ \le\ |X^{*}|\ \le\ \mathfrak c^{\dim X}\ =\ 2^{\dim X}.
\end{equation}
\end{lemma}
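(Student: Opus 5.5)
The three inequalities in \eqref{eq:dim-bound} will be proved one at a time; only the middle one needs an idea.

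The first inequality $\dim X^{*}\le|X^{*}|$ is trivial. A Hamel basis of $X^{*}$ is a subset of $X^{*}$.

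For the middle inequality $|X^{*}|\le\cc^{\dim X}$, I will fix a Hamel basis $B$ of $X$, so $|B|=\dim X$. A linear functional on $X$, continuous or not, is determined by its values on $B$. Hence the restriction map
\[
  X^{*}\longrightarrow\K^{B},\qquad f\longmapsto f|_{B},
\]
is injective. It follows that $|X^{*}|\le|\K|^{|B|}=\cc^{\dim X}$, since $|\K|=\cc$ for $\K=\R$ or $\C$. No continuity or completeness is used at this step. This is why the lemma is stated for arbitrary normed spaces.

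The last equality $\cc^{\dim X}=2^{\dim X}$ is cardinal arithmetic. Since $X$ is infinite-dimensional, $\kappa:=\dim X\ge\aleph_0$. Then
\[
  2^{\kappa}\le\cc^{\kappa}=(2^{\aleph_0})^{\kappa}=2^{\aleph_0\cdot\kappa}=2^{\kappa},
\]
using $\aleph_0\cdot\kappa=\kappa$ for infinite $\kappa$. The infinite-dimensionality hypothesis enters only here; for finite $\dim X\ge1$ the equality fails.

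The only step that needs care is choosing a Hamel basis rather than a dense subset in the middle inequality, because a dense subset would bring in continuity and give the sharper bound $\cc^{\dens(X)}$ quoted in the introduction. Everything here is routine, and I expect no real obstacle.
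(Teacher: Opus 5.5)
Your proposal is correct and follows the paper's proof essentially step for step. It uses the trivial bound $\dim X^{*}\le|X^{*}|$, the injection $f\mapsto f|_{B}$ of $X^{*}$ into $\K^{B}$ for a Hamel basis $B$, and the identity $\cc^{\dim X}=2^{\aleph_0\cdot\dim X}=2^{\dim X}$ for infinite $\dim X$.
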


\begin{proof}
The first inequality is obvious (the Hamel dimension does not exceed the cardinality). Take a
Hamel basis $B$ of $X$; then every vector of $X$ is a linear combination of finitely many
vectors of $B$, so $\varphi\in X^{*}$ is uniquely determined by its restriction $\varphi|_B$,
i.e. $\varphi\mapsto\varphi|_B$ is an injection $X^{*}\to\K^{B}$. Hence
$|X^{*}|\le|\K^{B}|=\mathfrak c^{|B|}=\mathfrak c^{\dim X}$; and from $\dim X\ge\aleph_0$ we get
\[
  \mathfrak c^{\dim X}=(2^{\aleph_0})^{\dim X}=2^{\aleph_0\cdot\dim X}=2^{\dim X}.\qedhere
\]
\end{proof}

\begin{lemma}[The dimension of an infinite-dimensional Banach space is its cardinality]\label{lem:dim-card}
Let $E$ be an infinite-dimensional Banach space. Then $\dim E=|E|$; in particular
$\dim E\ge\mathfrak c$ and $\dim E\ne\aleph_0$.
\end{lemma}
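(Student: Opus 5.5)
The plan is to prove the two inequalities $\dim E\le|E|$ and $|E|\le\dim E$ separately, and then deduce $\dim E\ge\cc$ from a Baire-category bound $\dim E>\aleph_0$. Throughout, let $B$ be a Hamel basis of $E$ and put $\kappa:=|B|=\dim E$.

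The inequality $\dim E\le|E|$ is trivial, since $B\subset E$.

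For the reverse inequality, every vector is a finite linear combination of elements of $B$. Hence $E$ is the union, over finite subsets $F\subset B$, of the spans $\Span F$, and each $\Span F$ is the image of $\K^{F}$. This gives
\[
|E|\le\sum_{F\subset B\ \text{finite}}|\K|^{|F|}\le\kappa\cdot\cc,
\]
because the number of finite subsets of an infinite set of size $\kappa$ is $\kappa$. So it suffices to show $\kappa\ge\cc$, since then $\kappa\cdot\cc=\kappa$.

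The key step, and the main obstacle, is proving $\kappa\ge\cc$; this is where completeness enters. I would first establish $\kappa>\aleph_0$ by Baire. Suppose $B=\{b_1,b_2,\dots\}$ were countable. Then $E=\bigcup_n\Span\{b_1,\dots,b_n\}$ would be a countable union of finite-dimensional, hence closed, subspaces. Each is proper because $E$ is infinite-dimensional, so each has empty interior. This contradicts the Baire category theorem in the complete space $E$.

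The bound $\kappa>\aleph_0$ alone does not give $\kappa\ge\cc$ without the continuum hypothesis, so I would use a direct construction. Choose a normalized sequence $(e_n)$ in $E$ with $\operatorname{dist}(e_{n+1},\Span\{e_1,\dots,e_n\})=1$, which is possible by Riesz's lemma. Rather than rely on a basic-sequence theorem, I would prefer the following route: for each $t\in(0,1)$ set $x_t:=\sum_n t^{n}\varepsilon_n e_n$, with $\varepsilon_n\to0$ rapidly so the series converges absolutely by completeness. The aim is to show these $\cc$ vectors are linearly independent. Suppose $\sum_{i=1}^k c_i x_{t_i}=0$ with $t_1>\dots>t_k$. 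The coefficient sequences $(t_i^n)_n$ are independent as sequences (a Vandermonde-type argument), but transferring this to the vectors $x_{t_i}$ requires a coefficient functional. The cleanest fix is to take $(e_n)$ to be a basic sequence via Mazur's theorem. Then coefficients are determined, $\sum_i c_it_i^n\varepsilon_n=0$ for all $n$, and dividing by $\varepsilon_n t_1^n$ and letting $n\to\infty$ forces $c_1=0$; induction finishes. If I wish to avoid Mazur, an alternative is a Hahn--Banach biorthogonal construction: choose $e_n$ and functionals $f_n$ with $f_n(e_m)=\delta_{nm}$ for $m\le n$, inductively using the Riesz distance, and control the tails by choosing $\varepsilon_n$ small relative to $\|f_j\|$. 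I expect this tail control to be the delicate part.

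Combining the above, $\cc\le\kappa\le|E|\le\kappa\cdot\cc=\kappa$, so $\dim E=|E|\ge\cc$. In particular $\dim E\ne\aleph_0$.
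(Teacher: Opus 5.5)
Your proof is correct, and its skeleton is the paper's. Both use the trivial bound $\dim E\le|E|$, the count $|E|\le\dim E\cdot\cc$ over finite subsets of a Hamel basis, and the Baire step, which only gives $\dim E>\aleph_0$ and is superseded by what follows. Both then use the family $x_t=\sum_n w_n t^n u_n$, $t\in(0,1)$, with weights $w_n$ equal to $2^{-n}$ in the paper and $\varepsilon_n$ for you.

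The genuine difference is where the coefficient functionals come from. You import Mazur's basic-sequence theorem and then eliminate coefficients asymptotically by dividing by $t_1^n$. The paper builds a biorthogonal system from Hahn--Banach alone. It chooses each new unit vector $u_n$ in $\bigcap_{k<n}\Ker f_k$, which is possible because this subspace has finite codimension in the infinite-dimensional space $E$. It then extends $u_k\mapsto\delta_{kn}$ from $\Span\{u_0,\dots,u_n\}$ to a functional $f_n$. Since $u_n$ lies in the kernels of all earlier functionals, $f_k(u_n)=\delta_{kn}$ holds for all $k,n$, not merely for $n\le k$. Hence $f_j(x_t)=w_jt^j$ exactly, and applying $f_0,\dots,f_{m-1}$ to a relation $\sum_j a_jx_{t_j}=0$ gives a Vandermonde system at once.

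This also resolves the ``tail control'' you expected to be delicate in your fallback. That difficulty comes only from the one-sided condition $f_n(e_m)=\delta_{nm}$ for $m\le n$, and picking $e_n$ in the common kernel of the earlier $f_k$ removes it.

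So the paper's route is fully elementary, while yours is shorter to write but rests on a substantially deeper theorem. The conclusion is the same either way. The extra weights $\varepsilon_n$ are unnecessary in both versions, since $\|t^n e_n\|=t^n$ already gives absolute convergence.
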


\begin{proof}
The inequality $\dim E\le|E|$ is clear, since a Hamel basis of $E$ is a subset of $E$. For the
reverse inequality we give a self-contained argument (the statement is due to Halbeisen and
Hungerb\"uhler \cite{halbeisen-hungerbuehler2000}, whose opening remarks make the same point
as the Baire argument below). The Baire category theorem rules out $\dim E=\aleph_0$: a
countable Hamel basis $\{e_1,e_2,\dots\}$ would exhibit
$E=\bigcup_{n\ge1}\Span\{e_1,\dots,e_n\}$ as a countable union of finite-dimensional, hence
closed and nowhere dense, subspaces, contradicting completeness. This yields only the strict
inequality $\dim E>\aleph_0$; the passage to ``at least $\mathfrak c$'' needs a separate
argument.

We construct a linearly independent family of cardinality $\mathfrak c$. We claim that there
are unit vectors $u_n\in E$ and functionals $f_n\in E^{*}$ satisfying
$f_n(u_m)=\delta_{nm}$. Indeed, suppose $u_0,\dots,u_{n-1}$ and $f_0,\dots,f_{n-1}$ have been
chosen; the common kernel $\bigcap_{k<n}\ker f_k$ has codimension at most $n$, hence is
infinite-dimensional (as $E$ is) and in particular contains a unit vector $u_n$. The vectors
$u_0,\dots,u_n$ are linearly independent, so putting $f_n(u_k):=0$ for $k<n$ and
$f_n(u_n):=1$ defines a bounded functional on $\Span\{u_0,\dots,u_n\}$, which by Hahn--Banach
has an extension to $E$; this gives the claim.

Now for $t\in(0,1)$ the series
\[
  x_t:=\sum_{n\ge0}2^{-n}t^{n}u_n
\]
converges absolutely in $E$, because $\|2^{-n}t^{n}u_n\|=2^{-n}t^{n}$ and $t<1$. If distinct
$t_1,\dots,t_m\in(0,1)$ satisfy a finite linear relation
$\sum_{j=1}^{m}a_jx_{t_j}=0$, then applying $f_0,\dots,f_{m-1}$ gives the Vandermonde system
\[
  \sum_{j=1}^{m}a_jt_j^{n}=0\qquad(n=0,1,\dots,m-1).
\]
Its matrix $(t_j^{n})_{0\le n\le m-1,\ 1\le j\le m}$ has determinant
$\prod_{i<j}(t_j-t_i)\ne0$, so $a_1=\cdots=a_m=0$. Thus $\{x_t:t\in(0,1)\}$ is a linearly
independent family of cardinality $\mathfrak c$, whence $\dim E\ge\mathfrak c$.

Finally, for an infinite-dimensional vector space $E$ over $\K$ one always has
$|E|=\max\{\mathfrak c,\dim E\}$: the inequality $|E|\ge\max\{\mathfrak c,\dim E\}$ is clear
(a Hamel basis of $E$, and the multiples of a nonzero vector, are subsets of $E$), while every
vector is a finite linear combination of basis vectors, so $E$ injects into the set of pairs
consisting of a finite subset of a Hamel basis and a $\K$-valued function on it; that set has
cardinality $\dim E\cdot\mathfrak c=\max\{\mathfrak c,\dim E\}$. Since $\dim E\ge\mathfrak c$, this yields
$|E|=\dim E$; in particular $\dim E\ge\mathfrak c$ and $\dim E\ne\aleph_0$.
\end{proof}

\begin{lemma}[Cardinality and density]\label{lem:cardinality-density}
Let $E$ be an infinite-dimensional Banach space and put $\nu=\dens(E)$. Then
\[
|E|=\nu^{\aleph_0},\qquad
\dim E=\nu^{\aleph_0}.
\]
\end{lemma}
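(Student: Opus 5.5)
Since Lemma \ref{lem:dim-card} already gives $\dim E=|E|$ for every infinite-dimensional Banach space, it suffices to prove $|E|=\nu^{\aleph_0}$, with $\nu=\dens(E)$. We prove the two inequalities separately.

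\emph{Upper bound $|E|\le\nu^{\aleph_0}$.} Fix a dense subset $D\subset E$ with $|D|=\nu$. Since $\dens(E)=\nu$, we have $\nu\ge\aleph_0$; $E$ is infinite-dimensional, so $\nu$ is infinite. Every $x\in E$ is the limit of a sequence $(d_k)_{k\ge1}$ in $D$, and a metric space has unique limits. Choosing one such sequence for each $x$ therefore gives an injection $E\to D^{\N}$. Hence $|E|\le|D|^{\aleph_0}=\nu^{\aleph_0}$.

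\emph{Lower bound $|E|\ge\nu^{\aleph_0}$.} This needs two ingredients, since $\nu^{\aleph_0}=\max\{\nu,\mathfrak c\}^{\aleph_0}$ requires an estimate beyond $|E|\ge\max\{\nu,\mathfrak c\}$.

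First, $|E|\ge\mathfrak c$ by Lemma \ref{lem:dim-card}. Also $|E|\ge\nu$ trivially, since $E$ is dense in itself and so $\dens(E)\le|E|$.

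Second, and this is the main obstacle, we need $|E|^{\aleph_0}=|E|$. The plan is to use completeness through absolutely convergent series. Fix the unit vectors $u_n$ and biorthogonal functionals $f_n$ constructed in the proof of Lemma \ref{lem:dim-card}. For any bounded sequence $(a_n)$ of scalars with $|a_n|\le1$, the series $\sum_n 2^{-n}a_nu_n$ converges in $E$, because $E$ is complete. Applying $f_m$ and arguing by induction on $m$ should recover the coefficients $a_m$. The difficulty is that $f_m$ does not annihilate $u_n$ for $n>m$, since only $f_n(u_m)=\delta_{nm}$ for $n\le m$-type ordering was arranged. I would therefore redo the construction so that the $f_n$ are uniformly bounded, with the $u_n$ chosen in common kernels and all pairings controlled. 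Alternatively I would choose $u_n$ with $\|u_n\|$ decaying fast enough that the tail is negligible. This yields only an injection of $\K^{\N}$ into $E$, i.e.\ $\mathfrak c\le|E|$ again, which is too weak.

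For the genuine step $|E|^{\aleph_0}\le|E|$ I would instead injectively encode countable sequences of elements of $E$ into $E$. Here I rely on the known theorem that every infinite-dimensional Banach space $E$ satisfies $|E|^{\aleph_0}=|E|$. This can be obtained by first showing $\dens(E)^{\aleph_0}\le|E|$: use a maximal $\varepsilon$-separated family of size $\nu$, which exists because $\dens(E)=\nu$. From it build, via a Hausdorff-type tree argument, $\nu^{\aleph_0}$ distinct limits of Cauchy sequences of the form $\sum_k 3^{-k}v_{s(k)}$ along branches $s$. Checking that distinct branches give distinct limits is the delicate estimate, and I would handle it by separating at the first index where the branches differ.

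Combining the two bounds gives $\nu^{\aleph_0}\le|E|\le\nu^{\aleph_0}$, hence $|E|=\nu^{\aleph_0}$, and then $\dim E=|E|=\nu^{\aleph_0}$ by Lemma \ref{lem:dim-card}.
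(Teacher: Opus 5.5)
Your reduction to $|E|=\nu^{\aleph_0}$ via Lemma~\ref{lem:dim-card} and your upper bound are fine. The gap is the lower bound.

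Relying on ``the known theorem'' $|E|^{\aleph_0}=|E|$ is circular here. That identity is a consequence of the very lemma you are proving, and together with the trivial $|E|\ge\nu$ it gives $\nu^{\aleph_0}\le|E|$ at once. So invoking it assumes the substance of the statement, and everything rests on your tree sketch. That sketch is left unverified at exactly the two places that matter.

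\emph{The size of the separated family.} The series $\sum_k3^{-k}v_{s(k)}$ converges only if the separated family is bounded, so it must live in the unit sphere (or ball). Then ``of size $\nu$, which exists because $\dens(E)=\nu$'' needs proof. Disjoint balls of radius $\varepsilon/2$, each meeting a dense set, give $|S|\le\nu$. But $|S|\ge\nu$ requires showing that $S$ generates $E$ densely. For a maximal $\tfrac12$-separated $S$ in the unit sphere, Riesz's lemma forces $\overline{\Span}\,S=E$, and rational combinations then give $\dens(E)\le|S|$. Note that this argument only certifies maximal families with separation $\varepsilon<1$.

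\emph{The first-difference estimate.} With ratio $1/3$ this estimate fails for such families. If $s,t$ first differ at $m$, the leading term has norm $\ge3^{-m}\varepsilon$, while the tail can have norm up to $2\sum_{k>m}3^{-k}=3^{-m}$. So you would need $\varepsilon>1$. Collisions actually happen. In the sup-norm plane, sitting isometrically inside $c_0$, the unit vectors $a=(1,0)$, $b=(1,1)$, $c=(0,1)$, $d=(0,-1)$ are pairwise at distance $\ge1$. For $s=(a,c,c,\dots)$ and $t=(b,d,d,\dots)$, the difference of the two sums is a positive multiple of $(a-b)+\sum_{j\ge1}3^{-j}(c-d)=(0,-1)+(0,1)=0$. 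Extend $\{a,b,c,d\}$ by Zorn's lemma to a maximal $\varepsilon$-separated subset of the unit sphere of $c_0$, for any $\varepsilon\le1$. This gives a family of exactly the kind you describe on which your map is not injective.

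The repair is to make the ratio small relative to the separation, not the other way round: any ratio $q$ with $2q/(1-q)<\varepsilon$ works. Take $S=\{u_\alpha:\alpha<\nu\}$ a maximal $\tfrac12$-separated subset of the unit sphere and $F(s)=\sum_{n\ge0}10^{-n}u_{s(n)}$. The tail is then at most $2\cdot10^{-m}/9$, so $\|F(s)-F(t)\|\ge10^{-m}(\tfrac12-\tfrac29)>0$ and $F:\nu^{\N}\to E$ is injective. This, together with the Riesz/disjoint-balls count $|S|=\nu$, is precisely the paper's proof.

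Incidentally, the obstruction you report on the biorthogonal route is not there. In Lemma~\ref{lem:dim-card} one has $u_n\in\bigcap_{k<n}\Ker f_k$, so $f_n(u_m)=\delta_{nm}$ for all $n,m$. As you note, though, that route only yields $\mathfrak c\le|E|$.
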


\begin{proof}
Choose a dense subset $D\subset E$ with $|D|=\nu$. Let $\mathcal C_D\subset D^{\N}$ be the set of sequences that converge in $E$. The limit map $\mathcal C_D\to E$ is surjective, and therefore $|E|\leq\nu^{\aleph_0}$.

For the reverse inequality, choose a maximal $1/2$-separated subset $S$ of the unit sphere of $E$. Its closed linear span is $E$: otherwise Riesz's lemma provides a unit vector at distance greater than $1/2$ from that closed span, contradicting maximality. The set $S$ is infinite, and finite linear combinations with coefficients in a countable dense subfield of the scalar field give $\dens(E)\leq|S|$. Conversely, the open balls of radius $1/4$ about the points of $S$ are disjoint; each meets $D$, so $|S|\leq|D|=\nu$. Thus write $S=\{u_\alpha:\alpha<\nu\}$.

For $s\in\nu^{\N}$ define
\[
F(s)=\sum_{n=0}^{\infty}10^{-n}u_{s(n)}.
\]
The series converges in $E$. If $s$ and $t$ first differ at index $m$, then
\[
\|F(s)-F(t)\|
\geq10^{-m}\left(\frac12-\frac29\right)>0.
\]
Hence $F$ is injective, and $|E|\geq\nu^{\aleph_0}$. This proves the cardinality formula. The dimension formula follows from Lemma \ref{lem:dim-card}.
\end{proof}

\begin{remark}\label{rem:dim-noaleph0}
The fact that $\dim E\ne\aleph_0$ is a property of Banach spaces: an \emph{incomplete} normed
space may have countably infinite Hamel dimension (for instance the space of finitely supported
sequences $c_{00}$, see the analogous space in Example \ref{ex:einf}).
Therefore statements such as ``taking the dual only raises the dimension from $\aleph_0$ to
$2^{\aleph_0}$'' can hold only for incomplete spaces; \S\ref{sec:dim} will show that it indeed
fails in the complete case.
\end{remark}

\begin{lemma}[\texorpdfstring{The dimension of the dual of $\ell^{\infty}$}{The dimension of the dual of l-infinity}]\label{lem:linf-dual}
$\dim\bigl(\ell^{\infty}(\mathbb N)\bigr)^{*}=2^{\mathfrak c}$.
\end{lemma}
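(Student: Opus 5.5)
We prove the two inequalities $\dim(\ell^{\infty})^{*}\le 2^{\mathfrak c}$ and $\dim(\ell^{\infty})^{*}\ge 2^{\mathfrak c}$ separately.

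\textbf{Upper bound.} $\ell^{\infty}(\N)$ is an infinite-dimensional Banach space of cardinality $\mathfrak c^{\aleph_0}=\mathfrak c$, so $\dim\ell^{\infty}=\mathfrak c$ by Lemma \ref{lem:dim-card}. Lemma \ref{lem:dim-bound} then gives
\[
\dim(\ell^{\infty})^{*}\le|(\ell^{\infty})^{*}|\le 2^{\dim\ell^{\infty}}=2^{\mathfrak c}.
\]
Alternatively, $\dens(\ell^{\infty})=\mathfrak c$, and Lemma \ref{lem:cardinality-density} gives $\dim(\ell^{\infty})^{*}=\nu^{\aleph_0}$ with $\nu=\dens((\ell^{\infty})^{*})\le|(\ell^{\infty})^{*}|\le 2^{\mathfrak c}$.

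\textbf{Lower bound.} By Lemma \ref{lem:dim-card} it suffices to show $|(\ell^{\infty})^{*}|\ge 2^{\mathfrak c}$, and for this it suffices to exhibit $2^{\mathfrak c}$ distinct functionals. We use the point evaluations at ultrafilters, which are the point masses on $\beta\N$.

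For every ultrafilter $\mathcal U$ on $\N$, the limit $\delta_{\mathcal U}(x):=\lim_{\mathcal U}x_n$ exists for bounded $x$. It is linear, has norm $1$, and is therefore an element of $(\ell^{\infty})^{*}$.

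Distinct ultrafilters give distinct functionals. If $A\in\mathcal U\setminus\mathcal V$, then $\delta_{\mathcal U}(1_A)=1$ while $\delta_{\mathcal V}(1_A)=0$.

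\textbf{Main obstacle.} The remaining step is the classical theorem of Posp\'{\i}\v{s}il that there are $2^{\mathfrak c}$ ultrafilters on $\N$. We would either cite it or sketch the standard proof.

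First choose an independent family $\{A_\alpha:\alpha<\mathfrak c\}$ of subsets of $\N$. One concrete construction works on the countable set of pairs $(F,\mathcal F)$, where $F\subset\N$ is finite and $\mathcal F\subset\mathcal P(F)$. For $S\subset\N$, put
\[
A_S=\{(F,\mathcal F): S\cap F\in\mathcal F\}.
\]
Finite Boolean combinations of distinct sets $A_S$ are then nonempty.

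Next, for each $g:\mathfrak c\to\{0,1\}$, the family consisting of the sets $A_\alpha$ with $g(\alpha)=1$ and the complements of the sets $A_\alpha$ with $g(\alpha)=0$ has the finite intersection property. It therefore extends to an ultrafilter $\mathcal U_g$. Distinct $g$ yield distinct $\mathcal U_g$, so there are at least $2^{\mathfrak c}$ ultrafilters.

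\textbf{Conclusion.} The functionals $\delta_{\mathcal U}$ give $|(\ell^{\infty})^{*}|\ge 2^{\mathfrak c}$, and Lemma \ref{lem:dim-card} converts this to $\dim(\ell^{\infty})^{*}\ge 2^{\mathfrak c}$. This shortcut avoids proving linear independence of the $\delta_{\mathcal U}$ directly, although that also holds. Combined with the upper bound, $\dim(\ell^{\infty})^{*}=2^{\mathfrak c}$.
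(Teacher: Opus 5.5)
Your proof is correct. The upper bound is exactly the paper's: Lemma \ref{lem:dim-card} followed by \eqref{eq:dim-bound}.

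The lower bound takes a somewhat different route. The paper proves that the measures $\mu_U\in\ba(\N)$ are \emph{linearly independent}: for distinct $U_1,\dots,U_m$ it picks pairwise disjoint $A_i\in U_i$, so that $\mu_{U_i}(A_j)=\delta_{ij}$. It then cites $|\beta\N|=2^{\mathfrak c}$. You differ in two places:
\begin{itemize}
\item You only show that the $\delta_U$ are \emph{distinct}, and let Lemma \ref{lem:dim-card} convert cardinality into dimension. This is legitimate, since $|E|=\max\{\mathfrak c,\dim E\}$ and $2^{\mathfrak c}>\mathfrak c$.
\item You prove Posp\'{\i}\v{s}il's theorem via the standard independent family instead of citing it. The only unproved claim in your sketch is covered by one line: choose a finite $F$ on which the finitely many distinct $S$ have pairwise distinct traces.
\end{itemize}

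Your version is shorter at the dimension step and self-contained on the ultrafilter count. The paper's version needs no Baire/Vandermonde input from Lemma \ref{lem:dim-card} for the lower bound. It also yields an explicit linearly independent family, which it reuses in the stronger isometric $\ell^{1}(\beta\N)$ form in Proposition \ref{prop:linf-bidual}.

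Your ``alternative'' upper bound adds nothing. It presupposes $|(\ell^{\infty})^{*}|\le 2^{\mathfrak c}$, which is already the conclusion of your first argument.
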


\begin{proof}
\emph{Lower bound.} By the classical correspondence $(\ell^{\infty})^{*}=\ba(\N)$ (the
space of bounded finitely additive measures on $\mathbb N$, see \cite{yosida1952}), for every
ultrafilter $U\in\beta\mathbb N$ put
\[
  \mu_U(A):=\lim\nolimits_{U}\chi_A=\begin{cases}1,&A\in U,\\ 0,&A\notin U,\end{cases}
  \qquad A\subseteq\mathbb N,
\]
so that $\mu_U$ is a $0$-$1$-valued finitely additive measure. Distinct ultrafilters give
distinct $\mu_U$; moreover, for distinct $U_1,\dots,U_m$ one can choose pairwise disjoint sets
$A_1,\dots,A_m$ with $A_i\in U_i$, so that $\mu_{U_i}(A_j)=\delta_{ij}$; hence
$\{\mu_U\}_{U\in\beta\mathbb N}$ is linearly independent (here we use that finitely many distinct
ultrafilters are separated by the atoms of the finite Boolean algebra generated by those
ultrafilters). From the classical result
$|\beta\mathbb N|=2^{2^{\aleph_0}}=2^{\mathfrak c}$ \cite{engelking1989} we get
$\dim(\ell^{\infty})^{*}\ge2^{\mathfrak c}$.

\emph{Upper bound.} $|\ell^{\infty}(\mathbb N)|=\mathfrak c^{\aleph_0}=\mathfrak c$, so by
Lemma \ref{lem:dim-card} we get $\dim\ell^{\infty}(\mathbb N)=\mathfrak c$, and then by inequality
\eqref{eq:dim-bound},
$\dim(\ell^{\infty})^{*}\le2^{\dim\ell^{\infty}}=2^{\mathfrak c}$.
\end{proof}

The same two ingredients (Lemma \ref{lem:linf-dual} for the first dual, and the duality
$(\ell^{\infty})^{*}=\ba(\N)$) determine the \emph{second} dual as well; we isolate this as a
proposition because it is used repeatedly below.

\begin{proposition}[\texorpdfstring{The dimension of the bidual of $\ell^{\infty}$}{The dimension of the bidual of l-infinity}]\label{prop:linf-bidual}
$\dim\bigl(\ell^{\infty}(\mathbb N)\bigr)^{**}=2^{2^{\mathfrak c}}$.
\end{proposition}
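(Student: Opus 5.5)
The plan is to sandwich $\dim(\ell^{\infty})^{**}$ between two copies of $2^{2^{\mathfrak c}}$, using the same two ingredients as in Lemma \ref{lem:linf-dual}: the cardinality bound \eqref{eq:dim-bound} for the upper estimate, and an explicit large family for the lower estimate.

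\emph{Upper bound.} By Lemma \ref{lem:linf-dual}, $\dim(\ell^{\infty})^{*}=2^{\mathfrak c}$. Applying inequality \eqref{eq:dim-bound} to the infinite-dimensional normed space $(\ell^{\infty})^{*}$ gives
\[
  \dim(\ell^{\infty})^{**}\le 2^{\dim(\ell^{\infty})^{*}}=2^{2^{\mathfrak c}} .
\]

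\emph{Lower bound.} The aim is to produce $2^{2^{\mathfrak c}}$ linearly independent elements of $(\ell^{\infty})^{**}$.

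First I locate a large $\ell^{1}$-space inside $\ba(\N)=(\ell^{\infty})^{*}$. The measures $\mu_U$, $U\in\beta\N$, are point evaluations on $\beta\N$ under $\ell^{\infty}\equiv C(\beta\N)$. Distinct ultrafilters are separated by disjoint sets, as in the proof of Lemma \ref{lem:linf-dual}. Hence for distinct $U_1,\dots,U_m$ and scalars $a_i$, testing against $\sum_i \overline{\operatorname{sgn} a_i}\,\chi_{A_i}$ gives $\bigl\|\sum a_i\mu_{U_i}\bigr\|=\sum|a_i|$. Therefore $Y:=\overline{\Span}\{\mu_U:U\in\beta\N\}$ is isometric to $\ell^{1}(\beta\N)$.

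Next, every bounded function $\theta$ on $\beta\N$ defines a bounded functional on $Y\equiv\ell^{1}(\beta\N)$, namely $\mu_U\mapsto\theta(U)$. By Hahn--Banach it extends to some $\Theta_\theta\in(\ell^{\infty})^{**}$. Alternatively, one can use the norm-one projection $\mu\mapsto\sum_U\mu(\{U\})\delta_U$ from the regular-measure picture $C(\beta\N)^{*}=M(\beta\N)$, exactly as in Remark \ref{rem:CK}; this gives a linear rather than merely pointwise choice of extensions. Either way, $\theta\mapsto\Theta_\theta|_Y$ is injective and linear. Hence the linear dimension of $\{\Theta_\theta\}$ is at least $\dim\ell^{\infty}(\beta\N)$.

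Finally I compute that dimension. By Lemma \ref{lem:dim-card}, $\dim\ell^{\infty}(\beta\N)=|\ell^{\infty}(\beta\N)|\ge 2^{|\beta\N|}$, since the $0$--$1$ valued functions already give $2^{|\beta\N|}$ elements. Using $|\beta\N|=2^{\mathfrak c}$, this yields $\dim(\ell^{\infty})^{**}\ge2^{2^{\mathfrak c}}$.

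The step I expect to be most delicate is the isometric identification of $Y$ with $\ell^{1}(\beta\N)$ for \emph{infinite} combinations, together with the linearity of the extension map. The cleanest route is through the projection $P\mu=\sum_U\mu(\{U\})\delta_U$ on $M(\beta\N)$: its adjoint $P^{*}$ maps $\ell^{\infty}(\beta\N)=(\ell^{1}(\beta\N))^{*}$ isometrically and linearly into $(\ell^{\infty})^{**}$. Then injectivity of $P^{*}$ directly gives $\dim(\ell^{\infty})^{**}\ge\dim\ell^{\infty}(\beta\N)$, with no need to track linear independence by hand.
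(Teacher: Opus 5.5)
Your proposal is correct and follows essentially the same route as the paper. The upper bound applies \eqref{eq:dim-bound} to $\ba(\N)$ with $\dim\ba(\N)=2^{\cc}$, and the lower bound uses the isometric copy of $\ell^{1}(\beta\N)$ spanned by the $\mu_U$ together with Hahn--Banach. That Hahn--Banach step is exactly the surjectivity of the restriction (transpose) map $\ba(\N)^{*}\to\ell^{\infty}(\beta\N)$ that the paper invokes. Your alternative via the atomic-part projection is the device the paper itself uses later in Remark \ref{rem:beth-tower}. Your count $|\ell^{\infty}(\beta\N)|\ge 2^{|\beta\N|}$ is an equally valid substitute for the paper's $\cc^{2^{\cc}}=2^{2^{\cc}}$.
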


\begin{proof}
Write $E:=\ell^{\infty}(\N)$, so that $E^{*}=\ba(\N)$ and $E^{**}=\ba(\N)^{*}$.

\emph{Upper bound.} Apply \eqref{eq:dim-bound} to the Banach space $\ba(\N)$, whose dimension is
$\dim\ba(\N)=\dim(\ell^{\infty})^{*}=2^{\cc}$ by Lemma \ref{lem:linf-dual}:
\[
  \dim E^{**}=\dim\ba(\N)^{*}\ \le\ 2^{\dim\ba(\N)}=2^{2^{\cc}} .
\]
(One must not apply \eqref{eq:dim-bound} to $E=\ell^{\infty}$ here: that only bounds the
\emph{first} dual $E^{*}$ by $2^{\cc}$ and says nothing about $E^{**}$.)

\emph{Lower bound.} For $U\in\beta\N$ let $\mu_U\in\ba(\N)$ be the $0$-$1$-valued measure from the
proof of Lemma \ref{lem:linf-dual}. For distinct $U_1,\dots,U_m$ choose pairwise disjoint sets
$A_j\in U_j$; then $\mu_{U_j}(A_i)=\delta_{ij}$. For all scalars $a_1,\dots,a_m$, since the
$\mu_{U_j}$ are $0$-$1$-valued, the partition $A_1,\dots,A_m,\ \N\setminus\bigcup_jA_j$ of $\N$
and the triangle inequality give
\[
  \Bigl\|\sum_{j}a_{j}\mu_{U_{j}}\Bigr\|=\sum_{j}|a_{j}| .
\]
Thus the linear map $\ell^{1}(\beta\N)\to\ba(\N)$, $e_U\mapsto\mu_U$, is an isometric embedding
(its closed linear span is an isometric copy of $\ell^{1}(\beta\N)$). Its transpose is therefore
a linear surjection $\ba(\N)^{*}\to\ell^{1}(\beta\N)^{*}=\ell^{\infty}(\beta\N)$: indeed, the
transpose of an isometric embedding is surjective, since every bounded functional on the closed
subspace $\ell^{1}(\beta\N)$ extends by Hahn--Banach to a bounded functional on $\ba(\N)$. For a
surjective linear map $V\to W$, one has $\dim W\le\dim V$, so with
$|\beta\N|=2^{\cc}$ \cite{engelking1989} and Lemma \ref{lem:dim-card},
\[
  \dim E^{**}=\dim\ba(\N)^{*}\ \ge\ \dim\ell^{\infty}(\beta\N)=|\ell^{\infty}(\beta\N)|
  =\cc^{2^{\cc}}=2^{2^{\cc}} .
\]
Combining the two bounds gives $\dim E^{**}=2^{2^{\cc}}$.
\end{proof}

At this point the basic theory of tower systems (the two limit spaces, the two limit functors,
one example that can be computed completely, and the basic facts about dimension) has been fully
set up. The next section returns with this toolkit to the three preprints, giving
counterexamples and correct substitute statements one by one; the correct inequalities for the
change of dimension are given in \S\ref{sec:dim}.

\section{A discussion of, and corrections to, three papers of Ugle\v{s}i\'c}\label{sec:corrections}

\subsection{The dimension theorem}\label{sec:dim}

The notation of this subsection is the same as in Section \ref{sec:prelim}: $\dim$ always denotes the Hamel dimension (algebraic dimension), $X^{(n)}=D^{n}(X)$, $\cc=2^{\aleph_0}$, $\ba(\N)$ denotes the space of bounded finitely additive measures on $\N$, and the superscript $\cdot^{\N}$ always denotes the countably infinite product. Throughout this subsection we use the basic facts of Subsection \ref{sub:dim-basic}: the dimension bound \eqref{eq:dim-bound}, Lemma \ref{lem:dim-card} (the dimension of an infinite-dimensional Banach space equals its cardinality), Lemma \ref{lem:linf-dual} ($\dim(\ell^{\infty})^{*}=2^{\cc}$) and Proposition \ref{prop:linf-bidual} ($\dim(\ell^{\infty})^{**}=2^{2^{\cc}}$).

The assertions concerning dimension in the three preprints are concentrated in Theorem 4, Theorem 5, Corollary 4, Corollary 5 and Remark 3 of [U2], in Lemma 4(ii) and Question 1 of [U1], and in Theorem 2(iii) of [U3]. Behind all of these assertions lies a single picture:
\begin{quote}
\emph{$\dim X^{*}$ is determined by $\dim X$.}
\end{quote}
This subsection shows that this picture is false, and supplies the correct replacement. The
\emph{density character} $\dens(X)$ yields the general upper bound $\dim X^{*}\le|X^{*}|\le\cc^{\dens(X)}$, which is sharp in the two extreme examples but does not \emph{determine} $\dim X^{*}$. In particular, the Hamel dimension of $X$ does not determine the exact value of $\dim X^{*}$, although in the infinite-dimensional case it supplies the bounds $\dim X\le\dim X^{*}\le2^{\dim X}$. Following the uniform pattern of this section, we first quote the original statements (\S\ref{sub:dim-claims}), then give counterexamples (\S\ref{sub:dim-counter}), then write down the correct quantitative picture (\S\ref{sub:dim-correct}), and finally explain which conclusions are unaffected (\S\ref{sub:dim-unaffected}).

\subsubsection{The original assertions}\label{sub:dim-claims}

We first fix two terms.

\begin{definition}[$\dim^{*}$-stable and bidual-like]\label{def:dim-stable}
A normed space $X$ is called $\dim^{*}$-\emph{stable} if $\dim X^{*}=\dim X$.
Following the terminology of [U1] and [U3], a Banach space $X$ is called \emph{bidual-like} (or \emph{bidualic}) if there exists a bounded linear isomorphism $X\to X^{**}$; the stronger condition that such an isomorphism can be chosen to be isometric is called \emph{parareflexivity} in [U3]. Reflexivity requires the particular canonical map $J_X$ to be surjective.
\end{definition}

This paper does not develop shape theory. In order to give the discussion below a precise starting point, we recall the definition of the invariant in question and the two implications that [U1] asserts for it.

\begin{definition}[finite quotient shape, {[U1]}]\label{def:shape}
The \emph{finite quotient shape} $\Sh_{\bar0}(X)$ of a normed space $X$ is the inverse system whose objects are the finite-dimensional quotients $X/F$ of $X$---here $F$ ranges over the closed subspaces of finite codimension and $X/F$ carries the quotient norm---and whose morphisms are the continuous linear maps; it is taken up to isomorphism in the corresponding pro-category. The indexing set consists of the closed finite-codimensional subspaces of $X$, ordered by reverse inclusion; if $F\subseteq G$, the bonding map is the quotient map $X/F\to X/G$. The ambient category has finite-dimensional normed spaces as objects and all continuous linear maps as morphisms, with no uniform bound imposed on the norms of the component maps. The invariant is the isomorphism class of this inverse system. Thus $\Sh_{\bar0}(X)=\Sh_{\bar0}(Y)$ means that these two inverse systems are isomorphic.
\end{definition}

The two implications asserted in [U1] are the following.
\begin{description}
  \item[(H1)] ([U1, Theorem 2 (i)]) If $\dim X=\dim Y$, then $\Sh_{\bar0}(X)=\Sh_{\bar0}(Y)$;
    this direction is asserted to require \emph{no} additional hypothesis.
  \item[(H2)] ([U1, Theorem 2 (ii)]) The converse implication
    $\Sh_{\bar0}(X)=\Sh_{\bar0}(Y)\Rightarrow\dim X=\dim Y$ requires additional hypotheses:
    either $\max\{\dim X,\dim Y\}\le\aleph_0$, or $\min\{\dim X,\dim Y\}>\aleph_0$ and
    \emph{each} of $X,Y$ is separable or bidual-like
    (the original condition is $X,Y\in\Ob(\text{sB}\cup\text{bB})$, where sB and bB denote
    respectively separable Banach spaces and bidual-like Banach spaces).
\end{description}

As will become clear below, \emph{(H1) is false as stated}: the invariant that classifies finite quotient shape is not the Hamel dimension of the space itself but the Hamel dimension of its continuous dual, and two spaces of equal dimension may well have duals of different dimensions. It is therefore convenient to replace (H1) and (H2) at once by the following exact criterion, which we prove from Definition \ref{def:shape} and use throughout. (The converse (H2), on the other hand, is \emph{true} under its stated hypotheses, and the criterion below in fact proves it; see Corollary \ref{cor:shape-H2}.)

\begin{theorem}[classification of finite quotient shape]\label{thm:shape-classification}
For normed spaces $X,Y$ one has
\begin{equation}\label{eq:shape-criterion}
  \Sh_{\bar0}(X)=\Sh_{\bar0}(Y)\quad\Longleftrightarrow\quad \dim X^{*}=\dim Y^{*}.
\end{equation}
Consequently (H1) fails: it is \emph{not} true that $\dim X=\dim Y$ implies $\Sh_{\bar0}(X)=\Sh_{\bar0}(Y)$.
\end{theorem}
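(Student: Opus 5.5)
The plan is to reduce the finite quotient shape to a purely algebraic object attached to $X^{*}$, and then use the standard fact that such pro-objects are classified by one cardinal. The first step is a dictionary. A closed subspace $F$ of finite codimension is the same thing as the annihilator of a finite-dimensional subspace $L=F^{\perp}\subseteq X^{*}$. Conversely, every finite-dimensional $L\subseteq X^{*}$ gives a closed subspace ${}^{\perp}L$ of codimension $\dim L$, and $({}^{\perp}L)^{\perp}=L$ by a finite-dimensional Hahn--Banach/bipolar argument. Moreover $(X/F)^{*}\cong F^{\perp}=L$ canonically, and since every linear map between finite-dimensional normed spaces is continuous, norms play no role. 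So $\Sh_{\bar0}(X)$ is isomorphic, in the pro-category of finite-dimensional vector spaces, to the inverse system $(L^{*})_{L}$ indexed by the finite-dimensional subspaces $L$ of the vector space $V:=X^{*}$, ordered by inclusion, with restriction maps as bonds. Dualising levelwise (an anti-equivalence on finite-dimensional spaces), this pro-object corresponds to the ind-object $(L)_{L\subseteq V}$, whose ``colimit'' is $V$.

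The second step is to show that the pro-category of finite-dimensional vector spaces is anti-equivalent to the category of all vector spaces. Under this anti-equivalence, the ind-object $\varinjlim L$ corresponds to $V$ itself, since ind-objects of finite-dimensional spaces are equivalent to all vector spaces, every space being the filtered union of its finite-dimensional subspaces. I will check full faithfulness directly. A morphism of ind-systems $(L)_{L\subseteq V}\to(M)_{M\subseteq W}$ is an element of $\varprojlim_L\varinjlim_M\operatorname{Hom}(L,M)=\varprojlim_L\operatorname{Hom}(L,W)=\operatorname{Hom}(V,W)$. Hence $\Sh_{\bar0}(X)=\Sh_{\bar0}(Y)$ if and only if $X^{*}$ and $Y^{*}$ are linearly isomorphic as abstract vector spaces. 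By existence and uniqueness of Hamel dimension, that holds if and only if $\dim X^{*}=\dim Y^{*}$, which is \eqref{eq:shape-criterion}.

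The last step is the failure of (H1). I take $X=\ell^{1}$ and $Y=\ell^{\infty}$. Both are Banach spaces of density at most $\cc$ with cardinality $\cc$, so $\dim X=\dim Y=\cc$ by Lemma \ref{lem:dim-card}. On the other side, $\dim(\ell^{1})^{*}=\dim\ell^{\infty}=\cc$, whereas $\dim(\ell^{\infty})^{*}=2^{\cc}$ by Lemma \ref{lem:linf-dual}. By \eqref{eq:shape-criterion} the shapes differ.

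I expect the main obstacle to be the bookkeeping in the first step: verifying carefully that the bonding maps $X/F\to X/G$ correspond exactly to the restrictions $G^{\perp}\hookrightarrow F^{\perp}$, and that the pro-isomorphism notion of Definition \ref{def:shape} matches the morphism sets computed above. The subtle point is that pro-morphisms are equivalence classes of level maps; I would handle this by using the Hom-formula $\operatorname{pro}(A,B)=\varprojlim_j\varinjlim_i\operatorname{Hom}(A_i,B_j)$ rather than level representations.
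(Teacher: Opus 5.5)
Your proposal is correct and follows essentially the same route as the paper: both use the annihilator correspondence $F\leftrightarrow F^{\perp}$ with $(X/F)^{*}\cong F^{\perp}$ to turn $\Sh_{\bar0}(X)$ into the dual of the direct system of finite-dimensional subspaces of $X^{*}$, whose algebraic colimit is $X^{*}$. Your Hom-formula computation $\varprojlim_L\varinjlim_M\operatorname{Hom}(L,M)=\operatorname{Hom}(V,W)$ packages the paper's two directions (passing to the colimit for $\Rightarrow$, transporting subspaces along a linear isomorphism $u:X^{*}\to Y^{*}$ for $\Leftarrow$) into a single full-faithfulness statement, and it handles non-level pro-morphisms more carefully than the paper's brief ``pass to the algebraic direct limit'' step; your $\ell^{1}$ versus $\ell^{\infty}$ counterexample to (H1) is also fine.
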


\begin{proof}
Let $F$ run over the closed subspaces of $X$ of finite codimension, so that $X/F$ runs over the finite-dimensional quotients of $X$.

($\Rightarrow$) Dualizing a finite-dimensional quotient gives a finite-dimensional subspace of the dual, namely $(X/F)^{*}\cong F^{\perp}\subseteq X^{*}$, and $F\subseteq F'$ corresponds to $F'^{\perp}\subseteq F^{\perp}$; thus dualizing turns the inverse system of finite-dimensional quotients into a \emph{direct system of finite-dimensional subspaces of $X^{*}$} whose bond maps are the transposes of the quotient maps. Its \emph{algebraic} direct limit is all of $X^{*}$: every $f\in X^{*}$ vanishes on $\Ker f$ and therefore factors as $f=\bar f\circ\pi$ through the quotient $X/\Ker f$; that quotient is finite-dimensional because $f$ has finite rank, and indeed $f\in(\Ker f)^{\perp}$, which is one of the layers. Hence an isomorphism of the inverse systems $\Sh_{\bar0}(X)=\Sh_{\bar0}(Y)$ induces, after dualizing and passing to the algebraic direct limit, a linear isomorphism $X^{*}\to Y^{*}$; in particular $\dim X^{*}=\dim Y^{*}$.

($\Leftarrow$) Conversely, let $u:X^{*}\to Y^{*}$ be a linear isomorphism. For a finite-dimensional subspace $G\subseteq X^{*}$ put $G_{\perp}:=\{x\in X:\ g(x)=0\ \text{for all }g\in G\}$. Then $G\mapsto G_{\perp}$ is inverse to $F\mapsto F^{\perp}$: it is a bijection between the finite-dimensional subspaces of $X^{*}$ and the closed finite-codimensional subspaces of $X$, with $X/G_{\perp}\cong G^{*}$. Since $u$ maps finite-dimensional subspaces to finite-dimensional subspaces and preserves inclusions, it carries the direct system $\{F^{\perp}\}$ to the direct system $\{u(F^{\perp})\}$, compatibly with the inclusions. For such a $G$, put $u_G:=u|_G:G\to u(G)$; under the natural identifications $X/G_{\perp}\cong G^{*}$ and $Y/(u(G))_{\perp}\cong(u(G))^{*}$ we use the inverse of the adjoint $u_G^{*}:(u(G))^{*}\to G^{*}$ to obtain a compatible isomorphism $X/G_{\perp}\to Y/(u(G))_{\perp}$, which is automatically continuous because its domain is finite-dimensional. Hence $\Sh_{\bar0}(X)=\Sh_{\bar0}(Y)$.
\end{proof}

\begin{corollary}\label{cor:shape-H2}
If $\Sh_{\bar0}(X)=\Sh_{\bar0}(Y)$, then $\dim X=\dim Y$ under either of the following hypotheses: (i) $X$ and $Y$ are normed spaces of at most countable Hamel dimension; or (ii) $X$ and $Y$ are infinite-dimensional Banach spaces, each of which is separable or bidual-like.
\end{corollary}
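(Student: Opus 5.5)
By Theorem \ref{thm:shape-classification}, the hypothesis $\Sh_{\bar0}(X)=\Sh_{\bar0}(Y)$ is equivalent to $\dim X^{*}=\dim Y^{*}$. So in each case the plan is to show that $\dim X$ can be recovered from $\dim X^{*}$ by a strictly increasing (hence injective) rule valid on the relevant class of spaces. Then equality of the dual dimensions forces $\dim X=\dim Y$.

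\emph{Case (i).} Suppose $\dim X\le\aleph_0$.

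If $\dim X=n<\infty$, then every linear functional is continuous, so $\dim X^{*}=n$.

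If $\dim X=\aleph_0$, take a Hamel basis $e_1,e_2,\dots$ and apply Gram--Schmidt with respect to any auxiliary inner product; this is not needed, because a finite-codimension argument suffices. By Hahn--Banach choose $f_k\in X^{*}$ biorthogonal to a linearly independent sequence, exactly as in the proof of Lemma \ref{lem:dim-card}. Then rescale the $f_k$ so that $\sum 2^{-k}t^{k}f_k/\|f_k\|$ converges in the Banach space $X^{*}$ (Lemma \ref{lem:dual-banach}). The same Vandermonde argument then gives $\dim X^{*}\ge\cc$, since $X^{*}$ is an infinite-dimensional Banach space and Lemma \ref{lem:dim-card} applies directly. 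Conversely, Lemma \ref{lem:dim-bound} gives $\dim X^{*}\le 2^{\aleph_0}=\cc$.

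Hence within case (i) the map $\dim X\mapsto\dim X^{*}$ is $n\mapsto n$ and $\aleph_0\mapsto\cc$. This is injective, so $\dim X^{*}=\dim Y^{*}$ gives $\dim X=\dim Y$.

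\emph{Case (ii).} The plan is to compute $\dim X^{*}$ in terms of $\dim X$ for each admissible type of space.

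\emph{Separable $X$.} Lemma \ref{lem:dim-card} gives $\dim X=\cc$. Also $|X^{*}|\le\cc^{\aleph_0}=\cc$, because a functional is determined by its values on a countable dense set. Since $X^{*}$ is an infinite-dimensional Banach space, Lemma \ref{lem:dim-card} gives $\dim X^{*}=\cc$. So $\dim X^{*}=\dim X=\cc$.

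\emph{Bidual-like $X$.} There is a bounded linear isomorphism $X\cong X^{**}$, so $\dim X=\dim X^{**}$. The key inequality is
\[
\dim X\le\dim X^{*}\le\dim X^{**}=\dim X .
\]
The first step holds because $X\hookrightarrow X^{**}$ and $X^{**}$ is the dual of $X^{*}$, so $\dim X\le\dim X^{**}=\dim (X^{*})^{*}$. I need an inequality $\dim E\le\dim E^{*}$ for Banach spaces $E$. I would prove it through Lemma \ref{lem:cardinality-density}, using $\dens(E)\le\dens$-type bounds: $E$ embeds isometrically in $E^{**}$, so $|E|\le|E^{**}|$. The cleaner route is to apply Lemma \ref{lem:cardinality-density} to $X$ and $X^{*}$. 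Since $X$ embeds isometrically into $(X^{*})^{*}$ and the weak$^*$ structure gives $\dens(X)\le\dens(X^{*})$, we get $\dim X=\dens(X)^{\aleph_0}\le\dens(X^{*})^{\aleph_0}=\dim X^{*}$. Applying the same inequality to $X^{*}$ gives $\dim X^{*}\le\dim X^{**}$.

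Therefore, in the bidual-like case, $\dim X^{*}=\dim X$ as well. Thus in case (ii) we always have $\dim X^{*}=\dim X$ for both $X$ and $Y$, and $\dim X=\dim X^{*}=\dim Y^{*}=\dim Y$.

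\emph{Main obstacle.} The delicate step is $\dens(X)\le\dens(X^{*})$. This standard fact needs an argument: choose norming functionals from a dense subset of $X^{*}$ and use them to build a dense subset of $X$ of the same cardinality, by a Riesz-lemma/separated-set argument as in Lemma \ref{lem:cardinality-density}.

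If that proves awkward, the alternative is to take a $1/2$-separated set $S$ in the unit sphere of $X$ with $|S|=\dens(X)$. For each $s\in S$, Hahn--Banach gives a norming functional $f_s\in X^{*}$. The aim is then to show that a $1/4$-separated subfamily of the functionals $\{f_s\}$ of the same cardinality exists, which would yield $\dens(X^{*})\ge\dens(X)$.
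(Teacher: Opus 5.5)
Your proof is correct and follows the paper's route: reduce to $\dim X^{*}=\dim Y^{*}$ via Theorem~\ref{thm:shape-classification}, use the injective assignment $n\mapsto n$, $\aleph_0\mapsto\cc$ in case (i), and in case (ii) show that each space has the same dimension as its dual, with the bidual-like case handled by the sandwich $\dim E\le\dim E^{*}\le\dim E^{**}=\dim E$. The inequality $\dim E\le\dim E^{*}$ that you reconstruct is the paper's Lemma~\ref{lem:dens-dual}, and it is proved by your first idea: pick $x_f$ with $|f(x_f)|>1/2$ for $f$ in a dense subset of the unit sphere of $E^{*}$; Hahn--Banach then shows their closed span is $E$. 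So no Riesz-lemma step is needed, and you should drop the separated-set fallback, since distinct separated vectors can share the same norming functional.
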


\begin{proof}
Theorem \ref{thm:shape-classification} gives $\dim X^{*}=\dim Y^{*}$. In case (i), the dual of a finite-dimensional space has the same finite dimension, whereas the dual of a countably infinite-dimensional normed space has Hamel dimension $\cc$. Thus the two spaces have the same finite dimension or are both countably infinite-dimensional. In case (ii), a separable infinite-dimensional Banach space $E$ satisfies $\dim E=\dim E^{*}=\cc$ (Proposition \ref{prop:dim-correct}(ii)). For a bidual-like Banach space, $\dim E=\dim E^{**}$ and Lemma \ref{lem:dens-dual} applied to $E$ and $E^{*}$ gives
\[
  \dim E\ \le\ \dim E^{*}\ \le\ \dim E^{**}=\dim E .
\]
Hence each of $X$ and $Y$ has the same dimension as its own dual, and the result follows.
\end{proof}

Note also that the additional hypotheses in (H2) are necessary, not redundant: since a single finite quotient shape class can contain both a space of dimension $\aleph_0$ and a space of dimension $\cc$ (see \S\ref{sub:dim-counter}), the implication in (H2) does fail once both hypotheses are dropped.

The original assertions are as follows (the English text is quoted verbatim from the original; the location is given in brackets).

\begin{pquote}
\textbf{[U2, Theorem 4]} (p.~22): ``Every normed vectorial space $X$ having (algebraic)
$\dim X\neq\aleph_0$ is $\dim^*$-stable. Especially, every Banach space is
$\dim^*$-stable.''\\[4pt]
\textbf{[U2, Theorem 5]} (p.~23): ``For every $X\in\Ob(\NF)$,
$\dim X^*=2^{\dim X}>\dim X\iff\dim X=\aleph_0$. Equivalently,
$\dim X^*=\dim X\iff\dim X\neq\aleph_0$. Therefore, for every $n\in\N$,
$D^n(X)$ is $\dim^*$-stable, $\dim D^n(X)=\dim X^*$ and
$\Sh_{\bar0}(D^n(X))=\Sh_{\bar0}(X)$.''\\[4pt]
\textbf{[U2, Corollary 4]} (p.~23): ``For every $X\in\Ob(\NF)$, the following properties are
equivalent: (i) $X$ is $\dim^*$-stable; (ii) $(\forall k\in\{0\}\cup\N)$
$D^k(X)$ is $\dim^*$-stable, i.e., $\dim D^{k+1}(X)=\dim D^k(X)$.''\\[4pt]
\textbf{[U2, Corollary 5 (ii)]} (p.~24): ``$|L(X,Y)|=\dim L(X,Y)=\dim D^n(X)
=\dim X=|X|$, $n\in\N$, whenever $\dim X>\aleph_0$.''\\[4pt]
\textbf{[U2, Remark 3]} (p.~24): ``$|X^{**}|=\dim X^{**}=\dim\Cl(X)=\dim X=|X|$,
whenever $\dim X>\aleph_0$.''\\[4pt]
\textbf{[U2, Lemma 4]} (p.~19): ``Let $X,Y\in\Ob(\NF)$ such that
$\Sh_{\bar0}(X)=\Sh_{\bar0}(Y)$. If $Y$ is $\dim^*$-stable, i.e., $\dim Y^*=\dim Y$,
then so is $X^*\equiv D(X)$ and $\dim X^{**}=\dim X^*=\dim Y$.''\\[4pt]
\textbf{[U3, Theorem 2(iii)]} (p.~15): ``$\dim\widetilde X=\dim X$ ($\dim X\neq\aleph_0$);
$\dim\widetilde X=2^{\aleph_0}$ ($\dim X=\aleph_0$).''\\[4pt]
\textbf{[U1, Lemma 4 (ii)]} (p.~11): ``Further, if $X=(V,\|\cdot\|)$ is a separable
or bidual-like normed space ($X^{**}\cong X$), then \dots\ (ii)
$|W(X)|=|Z_{\mathrm{cl}}(X)|=|X|=\dim X$.''\\[4pt]
\textbf{[U1, Question 1 and the assertion following it]} (p.~12): ``Does Lemma 4 (ii) hold true for
every normed (Banach) space $X$ having $\dim X\neq\aleph_0$? Namely, the author can
prove that $X=\ell^\infty$ (non-separable and non-bidual-like) is an example towards
the affirmative answer.''
\end{pquote}

The proof of [U3, Theorem 2(iii)] consists of only two sentences (p.~15): if $\dim X<\infty$ one may take $\widetilde X=X$; if $\dim X=\infty$, one invokes [U2, Theorem 5] and points out that ``a countable union (of quotients) in $\VF$ cannot raise an infinite dimension''. For an increasing sequence of vector spaces, the Hamel dimension of the algebraic union is the supremum of the dimensions of its layers; in particular, a countable direct limit of spaces of a common infinite dimension has dimension at most that common dimension. The failure in the dimension argument of [U3, Theorem 2(iii)] lies in its use of the incorrect dimension-stability assertion from [U2, Theorem 5]. For a completed direct limit there is a further issue: completion may increase Hamel dimension, as Remark \ref{rem:beth-tower} demonstrates.

\subsubsection{Counterexamples}\label{sub:dim-counter}

The first counterexample requires only one space.

\begin{theorem}[first counterexample]\label{thm:dim-counter}
Let $X=\ell^{\infty}(\N)$. Then $X$ is a Banach space with $\dim X=\cc\ne\aleph_0$, yet
\begin{equation}\label{eq:linf-notstable}
  \dim X^{*}=\dim(\ell^{\infty})^{*}=2^{\cc}\ \ne\ \cc=\dim X .
\end{equation}
Consequently [U2, Theorem 4] is false, and so are the ``$\Rightarrow$'' direction of the first $\iff$ in [U2, Theorem 5] and the ``Equivalently'' sentence that follows it.
\end{theorem}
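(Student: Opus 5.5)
The plan is to assemble the statement from results already established in \S\ref{sub:dim-basic}, and then to read off the logical consequences for the quoted assertions. First, $X=\ell^{\infty}(\N)$ is a Banach space: this is standard, and it also follows from the identification $\ell^{\infty}\equiv(\ell^{1})^{*}$ together with Lemma \ref{lem:dual-banach}. Next I compute $\dim X$. The cardinality count $|\ell^{\infty}|\le|\K^{\N}|=\cc^{\aleph_0}=\cc$, combined with Lemma \ref{lem:dim-card} (infinite-dimensional Banach spaces satisfy $\dim E=|E|\ge\cc$), gives $\dim X=\cc$. Alternatively, Lemma \ref{lem:cardinality-density} with $\dens(\ell^{\infty})=\cc$ gives the same value. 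In particular $\dim X\ne\aleph_0$, which is also the general content of Lemma \ref{lem:dim-card}.

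Second, I quote Lemma \ref{lem:linf-dual} to get $\dim X^{*}=2^{\cc}$. That lemma already contains the two nontrivial ingredients. The lower bound comes from the linearly independent family $\{\mu_U\}_{U\in\beta\N}$ together with $|\beta\N|=2^{\cc}$. The upper bound comes from \eqref{eq:dim-bound}. Cantor's theorem then gives $2^{\cc}>\cc$, which establishes \eqref{eq:linf-notstable}.

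Third, I derive the consequences. [U2, Theorem 4] asserts that every normed space with $\dim X\ne\aleph_0$, and in particular every Banach space, satisfies $\dim X^{*}=\dim X$. Our $X$ violates this. The ``$\Rightarrow$'' direction of the first equivalence in [U2, Theorem 5] says that $\dim X^{*}>\dim X$ forces $\dim X=\aleph_0$. Here $\dim X^{*}=2^{\cc}>\cc=\dim X$ while $\dim X\ne\aleph_0$, so that direction fails. The ``Equivalently'' sentence asserts that $\dim X\ne\aleph_0$ implies $\dim X^{*}=\dim X$, and this fails for the same $X$.

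The only delicate point is the lower bound $|\beta\N|=2^{\cc}$ inside Lemma \ref{lem:linf-dual}. Since it is already established (citing \cite{engelking1989}), I will treat it as given, so no real obstacle remains.
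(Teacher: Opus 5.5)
Your proposal is correct and follows the paper's proof essentially verbatim. You get $\dim\ell^{\infty}=|\ell^{\infty}|=\cc$ from Lemma \ref{lem:dim-card}, take $\dim(\ell^{\infty})^{*}=2^{\cc}$ from Lemma \ref{lem:linf-dual}, and then read off the failure of the three quoted assertions. Your paraphrase of the ``$\Rightarrow$'' direction of [U2, Theorem 5] drops the clause $\dim X^{*}=2^{\dim X}$ from its hypothesis, but that clause holds here because $\dim X^{*}=2^{\cc}=2^{\dim X}$, so the original hypothesis is met as well.
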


\begin{proof}
$\dim\ell^{\infty}=|\ell^{\infty}|=\cc^{\aleph_0}=\cc$ (Lemma \ref{lem:dim-card} together with $\dim\ell^\infty\ge\cc$), while $\dim(\ell^{\infty})^{*}=2^{\cc}$ (Lemma \ref{lem:linf-dual}). Combining the two gives \eqref{eq:linf-notstable}. Thus $\ell^\infty$ is a Banach space that is not $\dim^{*}$-stable, so the conclusion of Theorem 4 (``every Banach space is $\dim^*$-stable'') fails; moreover $\dim X^*=2^{\dim X}>\dim X$ holds while $\dim X=\aleph_0$ does not, hence the ``$\Rightarrow$'' direction of the first $\iff$ in Theorem 5 fails, and its ``Equivalently'' sentence (namely $\dim X^*=\dim X\iff\dim X\ne\aleph_0$) fails with it.
\end{proof}

The same computation also refutes two further assertions of [U2]. Substitute \eqref{eq:linf-notstable} into [U2, Corollary 5 (ii)] (which asserts, for $\dim X>\aleph_0$, that $\dim D^{n}(X)=\dim X=|X|$; take $X=\ell^\infty$, $n=1$) and into [U2, Remark 3] (which asserts $|X^{**}|=\dim X^{**}=\dim X=|X|$; for $X=\ell^\infty$ we have $\dim X^{**}=2^{2^{\cc}}$ by Proposition \ref{prop:linf-bidual}, while $\dim X=|X|=\cc$); both assertions then fail.

The failure of the last sentence of [U2, Theorem 5], ``$\dim D^n(X)=\dim X^*$ for all $n$'', occurs even one level lower, and can be seen with $c_0$.

\begin{example}[second counterexample: $c_0$ and $\ell^{1}$]\label{ex:dim-chain}
The first five terms of the dual chain of $c_0$---each arrow denoting the passage from a space to its dual, \emph{not} the canonical embedding $J$ (which would skip every second level)---are
\[
  c_0\ \longrightarrow\ \ell^{1}\ \longrightarrow\ \ell^{\infty}
  \ \longrightarrow\ \ba(\N)\ \longrightarrow\ \ell^{\infty**}\ \longrightarrow\ \cdots ,
\]
and the corresponding Hamel dimensions are, in order,
\[
  \cc,\quad \cc,\quad \cc,\quad 2^{\cc},\quad 2^{2^{\cc}} .
\]
Here $\dim c_0=\dim\ell^{1}=\dim\ell^{\infty}=\cc$, while $\dim\ba(\N)=\dim(\ell^{\infty})^{*}=2^{\cc}$ (Lemma \ref{lem:linf-dual}). The dimension of the fifth term, namely $\dim\ell^{\infty**}=\dim\ba(\N)^{*}$, is also determined: it equals $2^{2^{\cc}}$ (Proposition \ref{prop:linf-bidual})---the exponent is $2^{\cc}$ and not $\cc$ because the dimension formula \eqref{eq:dim-bound} has to be applied to $\ba(\N)$ and \emph{not} to $\ell^{\infty}$. Thus $c_0$ is $\dim^{*}$-stable, but $D^{2}(c_0)=\ell^{\infty}$ is not (Theorem \ref{thm:dim-counter}), so (i)$\Rightarrow$(ii) of [U2, Corollary 4] fails for $c_0$; and $\dim D^{3}(c_0)=\dim\ba(\N)=2^{\cc}\ne\cc=\dim c_0^{*}$, so the last sentence of Theorem 5 fails at $n=3$. Likewise $\ell^{1}$ is stable while $D^{2}(\ell^{1})=\ba(\N)$ has dimension $2^{\cc}$.
\end{example}

Note that this example is complementary to the previous one: what $\ell^{\infty}$ refutes is ``stability'', while what $c_0$ refutes is ``preservation of stability along iteration''. Together they show that the dimension may either remain constant or jump inside the tower, and that \emph{the place where it jumps cannot be predicted from $\dim X$}.

The third counterexample concerns the equality of [U3, Theorem 2(iii)]. Here no controversial argument is needed: all that is required is the fact already proved in Subsection \ref{sub:tower}: the embeddings of the layers into the hyperdual space are \emph{isometric} (Theorem \ref{thm:tower-exists}), and an isometric embedding does not decrease the Hamel dimension.

\begin{proposition}[lower bound for the dimension of the hyperdual space]\label{prop:dim-tower-lower}
Let $X$ be a Banach space and let $\widetilde X$ be its hyperdual space (Definition \ref{def:tower-space}).
Then
\begin{equation}\label{eq:dim-tower-lower}
  \dim\widetilde X\ \ge\ \sup_{n\ge0}\ \dim X^{(2n)} .
\end{equation}
\begin{proof}
By Theorem \ref{thm:tower-exists}, for every $n$ there is an isometric embedding $\kappa_{n,\infty}:X^{(2n)}\to\widetilde X$, which as an injective linear map does not decrease the dimension, that is, $\dim\widetilde X\ge\dim X^{(2n)}$; taking the supremum over $n$ gives \eqref{eq:dim-tower-lower}.
\end{proof}
\end{proposition}

\begin{example}[third counterexample: the hyperdual space of $c_0$]\label{ex:dim-tower}
Take $X=c_0$. By Proposition \ref{prop:linf-bidual}, $c_0^{(4)}=\ell^{\infty**}$ has dimension $2^{2^{\cc}}$, so \eqref{eq:dim-tower-lower} gives
\[
  \dim\widetilde{c_0}\ \ge\ \dim c_0^{(4)}\ =\ 2^{2^{\cc}}\ >\ \cc=\dim c_0 ,
\]
contradicting the assertion of [U3, Theorem 2(iii)] that $\dim\widetilde X=\dim X$ (when $\dim X\ne\aleph_0$). Passing to the completion (our convention, see Definition \ref{def:tower-space}) does not affect this conclusion: the isometric embedding and the dimension lower bound hold equally for the completed $\widetilde X$.
\end{example}

\begin{remark}[the tower of $\ell^{\infty}$ and the beth numbers]\label{rem:beth-tower}
Put $X=\ell^\infty(\mathbb N)$ and define
\[
\beth_0=\aleph_0,\qquad \beth_{n+1}=2^{\beth_n},\qquad
\beth_\omega=\sup_{n<\omega}\beth_n.
\]
Let $\Gamma_0=\mathbb N$, and let $\Gamma_{k+1}=\operatorname{Ult}(\Gamma_k)$ be the set of all ultrafilters on $\Gamma_k$. When the Stone--\v{C}ech construction is applied at the next stage, $\Gamma_k$ is again given the discrete topology. This is thus an iteration on sets, rather than repeated compactification of an already compact space. The standard ultrafilter cardinality formula yields
\[
|\Gamma_{k+1}|=2^{2^{|\Gamma_k|}},\qquad |\Gamma_k|=\beth_{2k}.
\]

For an infinite discrete set $\Gamma$, identify $\ell^\infty(\Gamma)$ with $C(\beta\Gamma)$. In the measure space $M(\beta\Gamma)$, the point masses span an isometric copy of $\ell^1(\operatorname{Ult}(\Gamma))$. It is the range of the contractive atomic-part projection
\[
\mu\longmapsto\sum_{t\in\beta\Gamma}\mu(\{t\})\delta_t.
\]
For each finite measure, only countably many coefficients in this sum are nonzero, and their absolute sum is bounded by $\|\mu\|$. Write $Q_\Gamma$ for the atomic-part projection and $\iota_\Gamma:\ell^1(\operatorname{Ult}(\Gamma))\hookrightarrow M(\beta\Gamma)$ for the inclusion, so that $Q_\Gamma\iota_\Gamma=\id$ and $\|Q_\Gamma\|\le1$; a contraction with a right inverse is a quotient map, and the transpose of a quotient map is isometric. Passing to adjoints therefore exhibits an isometric, norm-one complemented copy of $\ell^\infty(\operatorname{Ult}(\Gamma))$ in $\ell^\infty(\Gamma)^{**}$, namely the range of the isometric embedding $Q_\Gamma^{*}$, a range retracted by the contraction $\iota_\Gamma^{*}$.

A norm-one complemented isometric embedding remains such after passing to biduals: if $\iota:E\to F$ is isometric and $P:F\to E$ is a contraction with $P\iota=\id_E$, then $\iota^{**}:E^{**}\to F^{**}$ is again isometric and $P^{**}\iota^{**}=(P\iota)^{**}=\id_{E^{**}}$ with $\|P^{**}\|=\|P\|\le1$. Starting with $X$ itself and repeating the preceding construction therefore gives norm-one complemented isometric embeddings
\[
\ell^\infty(\Gamma_k)\longrightarrow X^{(2k)},\qquad
\ell^\infty(\Gamma_k)^{*}\longrightarrow X^{(2k+1)},
\]
the second being the transpose of the contraction that retracts the first.
For an infinite set $\Gamma$, the binary-valued functions give a $1$-separated subset of $\ell^\infty(\Gamma)$ of cardinality $2^{|\Gamma|}$. Thus
\[
\dens(\ell^\infty(\Gamma))
=|\ell^\infty(\Gamma)|
=\dim\ell^\infty(\Gamma)=2^{|\Gamma|}.
\]
The ultrafilter measures, by the argument of Lemma \ref{lem:linf-dual} applied to $\Gamma$, give
\[
\dim\ell^\infty(\Gamma)^*\geq2^{2^{|\Gamma|}}.
\]
It follows that $\dim X^{(2k)}\geq\beth_{2k+1}$ and $\dim X^{(2k+1)}\geq\beth_{2k+2}$. The upper bound of Lemma \ref{lem:dim-bound}, applied at every step and starting with $\dim X=\beth_1$, gives the matching inequalities. Consequently
\[
\dim X^{(n)}=\beth_{n+1}\qquad(n\geq0).
\]
The same complemented embeddings show that
\[
\dens(X^{(2k)})\geq\beth_{2k+1}.
\]
The reverse inequality follows from $\dens(X^{(2k)})\leq|X^{(2k)}|=\dim X^{(2k)}$, so equality holds.

Since the layers embed isometrically into $\widetilde X$ via the structure maps $\kappa_{k,\infty}$ and have dense union,
\[
\dens(\widetilde X)
=\sup_k\dens(X^{(2k)})=\beth_\omega.
\]
Here the lower bound uses the hereditary density inequality for metric spaces, and the upper bound follows by taking the union of dense subsets of the layers. Lemma~\ref{lem:cardinality-density} therefore yields
\[
\dim\widetilde X=(\beth_\omega)^{\aleph_0}.
\]
To evaluate this expression, put $\nu=\beth_\omega$. Partition a set $A$ of cardinality $\nu$ into countably many sets $A_n$ of cardinality less than $\nu$. Since $\nu$ is a strong limit cardinal, $|\mathcal P(A_n)|<\nu$ for every $n$. Restriction to the pieces gives
\[
2^\nu=|\mathcal P(A)|\leq\prod_n|\mathcal P(A_n)|
\leq\nu^{\aleph_0}\leq(2^\nu)^{\aleph_0}=2^\nu.
\]
Thus
\[
\dim\widetilde{\ell^\infty}=2^{\beth_\omega}=\beth_{\omega+1}.
\]
By contrast, the uncompleted increasing union has Hamel dimension
\[
\dim U_X=\sup_k\dim X^{(2k)}=\beth_\omega.
\]
Completion therefore strictly increases the Hamel dimension in this example.
\end{remark}

The next counterexample concerns [U2, Lemma 4]. The classification criterion of Theorem \ref{thm:shape-classification} allows us to choose spaces satisfying its finite-quotient-shape hypothesis, and shows that the lemma already fails within the class of separable Banach spaces.

\begin{proposition}[{[U2, Lemma 4]} fails]\label{prop:lemma4-fail}
Take $X=\ell^{1}(\N)$ and $Y=\ell^{2}(\N)$. Then the hypotheses of [U2, Lemma 4] are satisfied---$\Sh_{\bar0}(X)=\Sh_{\bar0}(Y)$, and $Y$ is $\dim^{*}$-stable---but its conclusion fails: $X^{*}=\ell^{\infty}$ is \emph{not} $\dim^{*}$-stable, and
\[
  \dim X^{**}=\dim(\ell^{\infty})^{*}=2^{\cc}\ne\cc=\dim Y=\dim X^{*}.
\]
\end{proposition}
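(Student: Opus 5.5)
The plan is to check the hypotheses of [U2, Lemma 4] first and then show that its conclusion fails, using only results already established.

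\emph{Equality of finite quotient shapes.} By Theorem \ref{thm:shape-classification}, it suffices to show $\dim X^{*}=\dim Y^{*}$. We have $X^{*}=(\ell^{1})^{*}=\ell^{\infty}$ and $Y^{*}=(\ell^{2})^{*}\equiv\ell^{2}$.
\begin{itemize}
\item For $\ell^{\infty}$: it is an infinite-dimensional Banach space with $|\ell^{\infty}|=\cc^{\aleph_0}=\cc$. Lemma \ref{lem:dim-card} then gives $\dim\ell^{\infty}=\cc$; this is the same computation used in Theorem \ref{thm:dim-counter}.
\item For $\ell^{2}$: it is an infinite-dimensional separable Banach space, so Lemma \ref{lem:cardinality-density} with $\nu=\aleph_0$ gives $\dim\ell^{2}=\aleph_0^{\aleph_0}=\cc$.
\end{itemize}
Hence $\dim X^{*}=\dim Y^{*}=\cc$, and Theorem \ref{thm:shape-classification} yields $\Sh_{\bar0}(X)=\Sh_{\bar0}(Y)$.

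\emph{$Y$ is $\dim^{*}$-stable.} This follows from the previous step: $\dim Y^{*}=\dim\ell^{2}=\cc=\dim Y$, since $Y^{*}\equiv Y$ isometrically.

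\emph{Failure of the conclusion.} We have $X^{**}=(\ell^{\infty})^{*}$, so Lemma \ref{lem:linf-dual} gives $\dim X^{**}=2^{\cc}$. On the other hand, $\dim X^{*}=\cc$ from the first step. Cantor's theorem gives $2^{\cc}>\cc$, so $\dim X^{**}\ne\dim X^{*}$. Thus $X^{*}$ is not $\dim^{*}$-stable, and also $\dim X^{**}\ne\dim Y=\cc$. This establishes the displayed chain.

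There is no real obstacle; every step invokes a result proved earlier. The only points needing care are:
\begin{itemize}
\item We must use the classification criterion, which compares $\dim X^{*}$ with $\dim Y^{*}$, rather than the refuted (H1), which compares $\dim X$ with $\dim Y$.
\item We must note that "$X^{*}$ is $\dim^{*}$-stable" means $\dim X^{**}=\dim X^{*}$, which is exactly the equality that fails.
\end{itemize}
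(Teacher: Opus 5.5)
Your proposal is correct and follows essentially the same route as the paper. You compute $\dim X^{*}=\dim Y^{*}=\cc$ and apply the criterion of Theorem \ref{thm:shape-classification}, then use $\dim(\ell^{\infty})^{*}=2^{\cc}$ to break the conclusion; the only differences are cosmetic (you cite Lemma \ref{lem:cardinality-density} for $\ell^{2}$ and Lemma \ref{lem:linf-dual} directly, where the paper cites Lemma \ref{lem:dim-card} and Theorem \ref{thm:dim-counter}).
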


\begin{proof}
Both $X$ and $Y$ are infinite-dimensional and separable, and $\dim X^{*}=\dim\ell^{\infty}=\cc$ while $\dim Y^{*}=\dim\ell^{2}=\cc$ (Lemma \ref{lem:dim-card}), so $\dim X^{*}=\dim Y^{*}$; by the criterion \eqref{eq:shape-criterion} this is exactly the statement $\Sh_{\bar0}(X)=\Sh_{\bar0}(Y)$, which is therefore satisfied. And $(\ell^{2})^{*}\cong\ell^{2}$ shows that $Y$ is $\dim^{*}$-stable. On the other hand $\dim X^{*}=\dim\ell^{\infty}=\cc$, so $X^{*}$ is not $\dim^{*}$-stable by Theorem \ref{thm:dim-counter}; equivalently, $\dim X^{**}=\dim(\ell^{\infty})^{*}=2^{\cc}\ne\cc=\dim X^{*}$. Finally $\dim Y=\dim\ell^{2}=\cc$, so $\dim X^{**}=2^{\cc}\ne\cc=\dim Y$. Thus both parts of the conclusion ``$X^{*}$ is $\dim^{*}$-stable and $\dim X^{**}=\dim X^{*}=\dim Y$'' of Lemma 4 fail. Note also that separability does not repair Lemma 4: both spaces here are separable Banach spaces.
\end{proof}

\begin{remark}[{[U1, Lemma 4 (ii)]} and the answer to Question 1]\label{rem:Z-count}
We now examine the cardinality assertion in [U1, Lemma 4 (ii)] directly. This counting argument requires no assumption about equality of finite quotient shapes. Writing
\[
  \mathcal Z(X):=\bigl\{Z\trianglelefteq X:\ Z\ \text{closed},\ \dim Z=\dim X,\
  \dim(X/Z)<\aleph_0\bigr\},
\]
note that the definition of $\mathcal Z(X)$ allows the codimension to be \emph{any} finite number, so besides closed hyperplanes it also contains closed subspaces of codimension $>1$, and one cannot count only hyperplanes as the original argument does; the correct approach is to use annihilators. Put $\mathcal F:=\{F\trianglelefteq X^{*}:\dim F<\aleph_0\}$. For $Z\in\mathcal Z(X)$, the inequality $\dim(X/Z)<\aleph_0$ implies that $Z^{\perp}\cong(X/Z)^{*}$ is finite-dimensional, and the closedness of $Z$ gives $Z=(Z^{\perp})^{\perp}$, so $Z\mapsto Z^{\perp}$ is an injection $\mathcal Z(X)\hookrightarrow\mathcal F$; moreover every $F\in\mathcal F$ is spanned by some finite tuple $(f_{1},\dots,f_{n})\in(X^{*})^{n}$, whence $|\mathcal F|\le\aleph_{0}\cdot|X^{*}|=|X^{*}|$ (as $|X^{*}|$ is infinite). Thus
\[
  N\ \le\ |\mathcal Z(X)|\ \le\ |\mathcal F|\ \le\ |X^{*}| ,
\]
where $N$ is the number of one-dimensional subspaces (i.e.\ closed hyperplanes $\Ker f$, $f\ne0$): for $f\ne0$ we have $X/\Ker f\cong\K$, hence $\dim(X/\Ker f)=1<\aleph_0$, and since $\dim X$ is infinite, $\dim X=\dim\Ker f+1$ gives $\dim\Ker f=\dim X$, that is, $\Ker f\in\mathcal Z(X)$; while $\Ker f=\Ker g$ if and only if $g=\lambda f$ ($\lambda\in\K\setminus\{0\}$), and each class contains $\cc$ nonzero functionals, so $N\cdot\cc=|X^{*}\setminus\{0\}|=|X^{*}|$. For $X=\ell^{\infty}$ we have $|X^{*}|=2^{\cc}>\cc$: then $N$ is infinite (otherwise $N\cdot\cc=\cc$), so $N\cdot\cc=\max(N,\cc)=2^{\cc}$ gives $N=2^{\cc}$, and substituting into the above inequality yields $|\mathcal Z(\ell^{\infty})|=2^{\cc}>|X|=\dim X=\cc$. This is precisely the failure of the equality $|\mathcal Z(X)|=|X|$ asserted in [U1, Lemma 4 (ii)]; since $\dim\ell^{\infty}=\cc\ne\aleph_0$, $\ell^{\infty}$ is a counterexample to the affirmative answer sought by [U1, Question 1], and that answer is therefore \emph{negative}.
\end{remark}

The implication from equal finite quotient shapes to equal Hamel dimensions fails without additional hypotheses. For example, equip $c_{00}$ with the supremum norm. Both $c_{00}$ and $c_0$ have continuous dual $\ell^{1}$, so Theorem \ref{thm:shape-classification} gives equal finite quotient shapes, whereas $\dim c_{00}=\aleph_0$ and $\dim c_0=\cc$.

\subsubsection{The correct quantitative picture}\label{sub:dim-correct}

The counterexamples provide negative information; now we give positive statements that can replace that group of assertions of the original. A functional on a dual space is determined by its values on any dense subset, so the density character $\dens(X)$---and not the Hamel dimension---is the quantity that yields a \emph{general upper bound} for the size of $X^{*}$. Note at the outset that this bound is only an upper bound: the density character does \emph{not} determine $|X^{*}|$ (see the end of this subsection).

\begin{lemma}[the density character gives an upper bound for the dual]\label{lem:dens-dual}
Let $X$ be a normed space. Then
\begin{equation}\label{eq:dens-dual}
  \dim X\ \le\ \dim X^{*}\ \le\ |X^{*}|\ \le\ \cc^{\dens(X)} .
\end{equation}
\end{lemma}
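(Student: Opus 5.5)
The plan is to prove the three inequalities of \eqref{eq:dens-dual} separately, with the finite-dimensional case set aside. If $\dim X<\aleph_0$, then $\dim X^{*}=\dim X$, and $|X^{*}|\le\cc\le\cc^{\dens(X)}$ is immediate, so from now on $X$ is infinite-dimensional. The middle inequality $\dim X^{*}\le|X^{*}|$ holds because a Hamel basis is a subset.

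For the right-hand inequality I would fix a dense subset $D\subset X$ with $|D|=\dens(X)$. A continuous functional is determined by its restriction to a dense set, so $\varphi\mapsto\varphi|_D$ is an injection $X^{*}\to\K^{D}$. This gives $|X^{*}|\le|\K|^{|D|}=\cc^{\dens(X)}$.

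The left-hand inequality $\dim X\le\dim X^{*}$ is the substantive part, and I would reduce it to the Banach case. Let $E$ be the completion of $X$. Every functional in $X^{*}$ extends uniquely with the same norm (as in Example \ref{ex:einf}(1)), so $X^{*}\equiv E^{*}$. Since $X\subseteq E$, we have $\dim X\le\dim E$. It therefore suffices to show $\dim E\le\dim E^{*}$ for an infinite-dimensional Banach space $E$.

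Both $E$ and $E^{*}$ are infinite-dimensional Banach spaces; the second is complete by Lemma \ref{lem:dual-banach}. Lemma \ref{lem:cardinality-density} then gives
\[
\dim E=\dens(E)^{\aleph_0}\quad\text{and}\quad\dim E^{*}=\dens(E^{*})^{\aleph_0}.
\]
So everything comes down to the classical inequality $\dens(E)\le\dens(E^{*})$, which I expect to be the only real obstacle. To prove it, I would take a dense subset $D^{*}$ of the unit sphere of $E^{*}$ with $|D^{*}|\le\dens(E^{*})$. For each $g\in D^{*}$ I choose a unit vector $x_g$ with $|g(x_g)|>1/2$. Let $M$ be the closed linear span of $\{x_g\}$.

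I claim $M=E$. If not, Hahn--Banach gives $f\in E^{*}$ with $\|f\|=1$ and $f|_M=0$. Pick $g\in D^{*}$ with $\|f-g\|<1/4$. Then
\[
|g(x_g)|\le|f(x_g)|+\|f-g\|<1/4,
\]
which contradicts $|g(x_g)|>1/2$.

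With $M=E$ established, finite linear combinations of the $x_g$ with coefficients in a countable dense subfield of $\K$ form a dense subset of $E$ of cardinality at most $|D^{*}|\cdot\aleph_0$. Hence $\dens(E)\le\dens(E^{*})$, and raising both sides to the power $\aleph_0$ yields $\dim E\le\dim E^{*}$. This completes the chain \eqref{eq:dens-dual}.
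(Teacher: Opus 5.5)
Your proof is correct and follows essentially the paper's route: the restriction injection $X^{*}\to\K^{D}$ gives the upper bounds, and for the left inequality you use the same $1/2$-versus-$1/4$ Hahn--Banach argument to get $\dens\le\dens$ of the dual, combined with Lemma \ref{lem:cardinality-density}. The only difference is that you pass to the completion $E$ and apply that lemma to both $E$ and $E^{*}$, whereas the paper skips the completion. It uses the elementary bound $\dim X\le|X|\le\dens(X)^{\aleph_0}$ (limits of sequences from a dense set, which needs no completeness) and applies the lemma only to the Banach space $X^{*}$.
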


\begin{proof}
If $X$ is finite-dimensional, then $\dim X=\dim X^*$; the remaining estimates follow from the restriction argument below. We therefore assume that $X$ is infinite-dimensional. The Hahn--Banach theorem, applied to finite-dimensional subspaces of arbitrarily large dimension, shows that $X^*$ is infinite-dimensional.

Let $D\subset X$ be dense with $|D|=\dens(X)$. A continuous functional is uniquely determined by its restriction to $D$, so restriction gives an injection $X^*\to\K^D$. Consequently
\[
\dim X^*\leq|X^*|\leq\mathfrak c^{\dens(X)}.
\]

To prove the left-hand inequality, we first show that
\[
\dens(X)\leq\dens(X^*).
\]
Choose a norm-dense subset $S$ of the unit sphere of $X^*$ with $|S|=\dens(X^*)$. For every $f\in S$, choose $x_f\in X$ with $\|x_f\|=1$ and $|f(x_f)|>1/2$. Put
\[
Y=\overline{\Span}\{x_f:f\in S\}^{\,X}.
\]
If $Y$ were a proper closed subspace of $X$, Hahn--Banach would provide $g\in X^*$ with $\|g\|=1$ and $g|_Y=0$. Choose $f\in S$ with $\|f-g\|<1/4$. Then
\[
\frac12<|f(x_f)|=|(f-g)(x_f)|<\frac14,
\]
a contradiction. Therefore $Y=X$. Finite linear combinations of the $x_f$ with coefficients in a countable dense subfield form a dense subset of cardinality at most $|S|$. This proves the density inequality.

Every point of $X$ is the limit of a sequence in $D$, so the map sending each sequence in $D^{\N}$ that converges in $X$ to its limit is onto $X$. Hence $|X|\leq\dens(X)^{\aleph_0}$. Applying Lemma~\ref{lem:cardinality-density} to the infinite-dimensional Banach space $X^*$ now gives
\[
\dim X\leq|X|
\leq\dens(X)^{\aleph_0}
\leq\dens(X^*)^{\aleph_0}
=|X^*|=\dim X^*.
\]
Together with the restriction estimate, this proves the assertion.
\end{proof}

\begin{theorem}[comparison of dimensions]\label{thm:dim-compare}
Let $X$ be an infinite-dimensional Banach space. Then
\begin{enumerate}
  \item[(i)] along the tower the Hamel dimension is non-decreasing:
    \[
      \dim X\ \le\ \dim X^{**}\ \le\ \dim X^{(4)}\ \le\ \cdots\ \le\ \dim\widetilde X ;
    \]
  \item[(ii)] along the inverse tower we have $\dim X^{(2n+1)}\le\dim\overleftarrow X$ ($n\ge0$);
  \item[(iii)] the levelwise bound $\dim X^{(n+1)}\le2^{\dim X^{(n)}}$ ($n\ge0$), and in particular $\dim X^{*}\le2^{\dim X}$.
\end{enumerate}
\end{theorem}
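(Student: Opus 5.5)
The plan is to reduce all three parts to a single principle: an injective linear map does not decrease Hamel dimension. The needed injections and bounds are all stated earlier in the paper.

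For (i), the steps within the tower come from the canonical embeddings $J_{X^{(2n)}}:X^{(2n)}\to X^{(2n+2)}$. These are isometric by Lemma \ref{lem:isometry}, hence injective linear maps, which gives $\dim X^{(2n)}\le\dim X^{(2n+2)}$ for every $n\ge0$. The last inequality, $\sup_n\dim X^{(2n)}\le\dim\widetilde X$, is exactly Proposition \ref{prop:dim-tower-lower}, which uses the isometric structure maps $\kappa_{n,\infty}$ of Theorem \ref{thm:tower-exists}.

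For (ii), I would use the identification $\overleftarrow X\equiv\widetilde X^{*}$ of Theorem \ref{thm:dir-dual-inv} and produce an injection $X^{(2n+1)}\to\widetilde X^{*}$. The candidate is the adjoint of a bounded retraction onto the $n$-th layer, i.e.\ a map $r:\widetilde X\to X^{(2n)}$ with $r\kappa_{n,\infty}=\id$. Then $r^{*}:X^{(2n+1)}\to\widetilde X^{*}$ satisfies $\kappa_{n,\infty}^{*}r^{*}=\id$, so it is injective.

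To build $r$, I would copy the James-space argument of Theorem \ref{thm:james-tower}. For $m\ge1$, the layer $X_m$ is a dual space, so the norm-one maps $\rho_m=(J_{X^{(2m-1)}})^{*}:X_{m+1}\to X_m$ satisfy $\rho_m\iota_m=\id$. For $n\ge1$, the composites $q_m=\rho_n\cdots\rho_{m-1}:X_m\to X_n$ ($m\ge n$) are compatible contractions. The universal property of Theorem \ref{thm:tower-exists} then gives a contraction $r$ with $r\kappa_{n,\infty}=\id_{X_n}$.

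The case $n=0$ is the main obstacle, since $X$ itself need not be a dual space and no contractive retraction is available. There I would avoid retractions entirely and argue by dimension instead. From (i) and Lemma \ref{lem:dens-dual} applied to $X^{*}$,
\[
\dim X^{*}\le\dim X^{**}\le\dim\widetilde X\le\dim\widetilde X^{*},
\]
where the final step is again Lemma \ref{lem:dens-dual}, applied to $\widetilde X$. The same chain also works for every $n$: use $\dim X^{(2n+1)}\le\dim X^{(2n+2)}\le\dim\widetilde X\le\dim\widetilde X^{*}$. This makes the retraction argument optional, and I would present the chain as the main proof.

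For (iii), apply Lemma \ref{lem:dim-bound} to the normed space $X^{(n)}$. This space is infinite-dimensional: $X\hookrightarrow X^{(2k)}$ by (i), and $X^{(2k+1)}$ has dimension at least $\dim X^{(2k)}$ by Lemma \ref{lem:dens-dual}. The lemma then gives $\dim X^{(n+1)}\le|X^{(n+1)}|\le2^{\dim X^{(n)}}$, and the case $n=0$ is the stated special case.
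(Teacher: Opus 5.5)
Your proof is correct. Parts (i) and (iii) follow the paper's argument; in (iii) you also check that each $X^{(n)}$ is infinite-dimensional, which the paper leaves implicit when it applies Lemma \ref{lem:dim-bound} level by level.

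For (ii) your route differs from the paper's. The paper does not look for an injection $X^{(2n+1)}\to\widetilde X^{*}$ at all. Since $\kappa_{n,\infty}$ is an isometric embedding, Hahn--Banach makes the restriction map $\kappa_{n,\infty}^{*}:\widetilde X^{*}\to X^{(2n+1)}$ surjective, and a linear surjection cannot increase Hamel dimension. This works uniformly for all $n\ge0$, so the difficulty you found at $n=0$ never arises. That difficulty is genuine for retractions: $c_0$ is not complemented in $\ell^{\infty}$, hence not in $\widetilde{c_0}$.

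Your chain $\dim X^{(2n+1)}\le\dim X^{(2n+2)}\le\dim\widetilde X\le\dim\widetilde X^{*}$ is valid, but it relies on Lemma \ref{lem:dens-dual}. The left inequality of that lemma is proved through Lemma \ref{lem:cardinality-density} and the identity $\dim E=|E|$ of Lemma \ref{lem:dim-card}, whereas the paper's argument needs only Hahn--Banach. In return, your chain yields the stronger comparison $\dim\widetilde X\le\dim\overleftarrow X$, which the paper does not state.

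Your optional retraction argument duplicates Proposition \ref{prop:tower-layer-retractions}. If you keep it, note that the universal property of Theorem \ref{thm:tower-exists} is stated for a compatible family on all levels, so you must also set $q_m=\kappa_{m,n}$ for $m<n$.
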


\begin{proof}
(i) The canonical embedding $J_{X^{(2n)}}:X^{(2n)}\to X^{(2n+2)}$ between two adjacent layers is an injective linear map, while Theorem \ref{thm:tower-exists} provides the isometric embeddings $\kappa_{n,\infty}:X^{(2n)}\to\widetilde X$; the former makes the sequence $\dim X^{(2n)}$ non-decreasing, and the latter shows that $\dim\widetilde X$ is an upper bound for that non-decreasing sequence.

(ii) By Theorem \ref{thm:dir-dual-inv}, $\overleftarrow X\equiv(\widetilde X)^{*}$. Since $\kappa_{n,\infty}$ is an isometric embedding, the Hahn--Banach theorem implies that the restriction map $(\widetilde X)^{*}\to(X^{(2n)})^{*}=X^{(2n+1)}$ is a linear surjection; a linear surjection does not increase the Hamel dimension, so $\dim X^{(2n+1)}\le\dim(\widetilde X)^{*}=\dim\overleftarrow X$.

(iii) This is \eqref{eq:dim-bound} applied level by level.
\end{proof}

\begin{proposition}[the correct quantitative picture]\label{prop:dim-correct}
Let $X$ be an infinite-dimensional Banach space.
\begin{enumerate}
  \item[(i)] General upper bound: $\dim X^{*}\le|X^{*}|\le\cc^{\dens(X)}$, and equality can occur (Theorem \ref{thm:dim-counter}); in the separable case it reduces to $\dim X=\dim X^{*}=\cc$;
  \item[(ii)] If $X$ is moreover separable, then $\dim X=\dim X^{*}=\cc$ while $\dim X^{**}\ge\cc$; but $\dim X^{**}$ has no such upper bound $\cc$ and can be strictly larger: for instance, for $X=\ell^{1}$ we have $\dim X^{**}=\dim\ba(\N)=2^{\cc}$ (Example \ref{ex:dim-chain});
  \item[(iii)] Consequently $\dim X^{*}$ is not a function of $\dim X$: both $\dim X=\dim X^{*}=\cc$ and $\dim X=\cc,\ \dim X^{*}=2^{\cc}$ occur, the former for $\ell^{1}(\N)$, $c_0$ and the James space (Theorem \ref{thm:james-tower}), the latter for $\ell^{\infty}(\N)$.
\end{enumerate}
\end{proposition}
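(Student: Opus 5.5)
The proposition collects statements that each reduce to results already proved, so I will verify (i)--(iii) in turn.

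For (i), the chain $\dim X^{*}\le|X^{*}|\le\cc^{\dens(X)}$ is exactly the right half of Lemma \ref{lem:dens-dual}. To show that equality can occur I take $X=\ell^{1}$. It is separable, so $\dens(X)=\aleph_0$ and the bound reads $\cc^{\aleph_0}=\cc$. On the other hand $\dim(\ell^{1})^{*}=\dim\ell^{\infty}=\cc$ by Theorem \ref{thm:dim-counter}, so equality holds. I prefer this example to $\ell^{\infty}$ itself, because $\dens(\ell^{\infty})=\cc$ gives the bound $2^{\cc}$, and this is also attained by Lemma \ref{lem:linf-dual}; either example would do. The separable reduction is the case $\dens(X)=\aleph_0$: the upper bound is $\cc$, and the lower bounds come from $\dim X\le\dim X^{*}$ (Lemma \ref{lem:dens-dual}) together with $\dim X\ge\cc$ (Lemma \ref{lem:dim-card}). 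Hence $\dim X=\dim X^{*}=\cc$.

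For (ii), the equalities $\dim X=\dim X^{*}=\cc$ are the separable case just treated. The bound $\dim X^{**}\ge\cc$ follows from Lemma \ref{lem:dens-dual} applied to $X^{*}$, or simply from the injectivity of $J_X$. The example $X=\ell^{1}$ gives $X^{**}=(\ell^{\infty})^{*}=\ba(\N)$, whose dimension is $2^{\cc}$ by Lemma \ref{lem:linf-dual}; this is the computation recorded in Example \ref{ex:dim-chain}. So no upper bound $\cc$ holds at the second level.

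For (iii), I exhibit the two patterns. For $\ell^{1}$ and $c_0$, which are separable, part (i) gives $\dim X=\dim X^{*}=\cc$. The James space is separable by Example \ref{ex:james}(1), so the same conclusion applies to it. For $\ell^{\infty}$, Theorem \ref{thm:dim-counter} gives $\dim X=\cc$ and $\dim X^{*}=2^{\cc}$. Since spaces with the same $\dim X=\cc$ have duals of different dimensions, $\dim X^{*}$ is not a function of $\dim X$.

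There is no real obstacle. The only points needing care are the cardinal arithmetic $\cc^{\aleph_0}=\cc$, and making sure the equality in (i) is realised for a space where the density bound is genuinely the one stated, rather than relying on a weaker bound such as $2^{\dim X}$.
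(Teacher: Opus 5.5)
Your proposal is correct and follows the paper's proof essentially step by step. It uses Lemma \ref{lem:dens-dual} for the chain, the sandwich $\cc\le\dim X\le\dim X^{*}\le\cc^{\aleph_0}=\cc$ in the separable case, injectivity of $J_X$ for $\dim X^{**}\ge\cc$, and Lemma \ref{lem:linf-dual} for $\dim\ba(\N)=2^{\cc}$.

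The only deviation is your primary witness for equality in (i). $\ell^{1}$ is valid but degenerate, since equality holds automatically for every separable space. The paper uses $\ell^{\infty}$ (as the citation of Theorem \ref{thm:dim-counter} indicates), where the bound $\cc^{\dens(\ell^{\infty})}=2^{\cc}$ is attained in a non-separable case. That is your stated alternative, which additionally needs the one-line justification $\dens(\ell^{\infty})\ge\cc$ from the $1$-separated family of $\{0,1\}$-valued sequences.
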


\begin{proof}
(i) This is Lemma \ref{lem:dens-dual}; for $\ell^{\infty}$ we have $\dens(\ell^\infty)=\cc$ (it contains the image of the uncountable discrete set $\{0,1\}^{\N}$, see below), so $\cc^{\dens}=2^{\cc}=|(\ell^\infty)^*|$ and equality is indeed attained.

(ii) If $X$ is infinite-dimensional and separable, then $\dens(X)=\aleph_0$, so \eqref{eq:dens-dual} gives $\dim X^{*}\le|X^{*}|\le\cc^{\aleph_0}=\cc$; on the other hand $X$ is an infinite-dimensional Banach space, so $\dim X=\cc$ (Lemma \ref{lem:dim-card}) and $\dim X^{*}\ge\dim X=\cc$, whence $\dim X^{*}=\cc$. As for $X^{**}$, the non-decreasing chain of Theorem \ref{thm:dim-compare}(i) only gives the lower bound $\dim X^{**}\ge\dim X=\cc$; one \emph{cannot} repeat the above upper-bound argument for $X^{**}$: the input density controlling $X^{**}$ is $\dens(X^{*})$, not $\dens(X)$, and $\dens(X^{*})$ need not be countable (a separable $X$ does not imply a separable $X^{*}$), so \eqref{eq:dens-dual} applied to $X^{*}$ gives no upper bound $\cc$ for $X^{**}$. Indeed, for $X=\ell^{1}$ we have $X^{*}=\ell^{\infty}$, which is non-separable, and $\dim X^{**}=\dim(\ell^{\infty})^{*}=2^{\cc}$ (Example \ref{ex:dim-chain}), which is already strictly larger than $\cc$.

(iii) This follows at once from (i), (ii) and Theorem \ref{thm:dim-counter}; for the separability of the James space and $J\equiv J^{**}$ see Example \ref{ex:james}, and for the fact that all of its layers have dimension $\cc$ see Theorem \ref{thm:james-tower}.
\end{proof}

\begin{remark}[the $\aleph_0$ half is right, but only for incomplete spaces]\label{rem:dim-aleph0}
[U2, Theorem 5] asserts that $\dim X^{*}=2^{\dim X}\iff\dim X=\aleph_0$. Its ``$\Leftarrow$'' direction is correct, for elementary reasons, for \emph{incomplete} normed spaces: in that case $X$ has a countable Hamel basis, hence is separable, so $|X^{*}|\le\cc$ (the proof of Lemma \ref{lem:dens-dual}), while $X^{*}$ is an infinite-dimensional Banach space, so $\dim X^{*}=\cc=2^{\aleph_0}$. In the \emph{complete} case, however, this direction never arises---because an infinite-dimensional Banach space has dimension $\ge\cc$ (Lemma \ref{lem:dim-card}), so $\dim X=\aleph_0$ never arises. Thus the ``$\aleph_0$ side'' of this assertion is really about \emph{incomplete} spaces, while the ``$\Rightarrow$'' side (that the dimension stops growing in the uncountable case) is directly refuted by $\ell^{\infty}$.
\end{remark}

\begin{remark}[the root of the matter]\label{rem:dim-root}
The root of the problem here lies not in some cardinality computation, but in treating the Hamel dimension as the \emph{input} of the dual operator. $D$ converts topological information into an algebraic dimension, whereas the density character at the input end only \emph{bounds} $|X^{*}|$ from above---it does not determine it:
\begin{equation}\label{eq:dim-philosophy}
  \dim X\ \le\ \dim X^{*}\ \le\ |X^{*}|\ \le\ \cc^{\dens(X)} ,
  \qquad
  \sup_{n}\dim X^{(2n)}\ \le\ \dim\widetilde X .
\end{equation}
For $\ell^{\infty}$ (with $\dens=\cc$) the right-hand side of the first formula gives $2^{\cc}$, and for separable spaces it gives $\cc$; the bound is sharp in these two examples, but it is a genuine upper bound in general and does not determine $|X^{*}|$ (Remark \ref{rem:dens-counter}). This also explains why (H2) of [U1] must retain the hypothesis ``separable or bidual-like''---separability is precisely the hypothesis that keeps $\dens$ under control; once it is dropped, ``shape $\Rightarrow$ dimension'' fails (Proposition \ref{prop:lemma4-fail}). The second formula of \eqref{eq:dim-philosophy} is the general lower bound we adopt on the hyperdual side; in general it is only a lower bound, and it may be \emph{strict}: for $\ell^{\infty}$ passing to the completion raises the dimension. As for the exact values, they turn out to be computable after all: $\dim\ell^{\infty**}=2^{2^{\cc}}$ (Proposition \ref{prop:linf-bidual}), and along the tower of $\ell^{\infty}$ the dimensions are given by the beth numbers, the completed hyperdual satisfying $\dim\widetilde{\ell^{\infty}}=\beth_{\omega+1}$ while the union of the incomplete tower has dimension $\beth_{\omega}$ (Remark \ref{rem:beth-tower}); see also \S\ref{sec:open}.
\end{remark}

\begin{remark}[density does not determine the dimension of the dual]\label{rem:dens-counter}
Let $|\Gamma|=\cc$. Both $\ell^{2}(\Gamma)$ and $\ell^{\infty}(\N)$ have density $\cc$, while their duals have Hamel dimensions $\cc$ and $2^{\cc}$, respectively. Indeed, for the Hilbert space every vector has countable support, so $|\ell^{2}(\Gamma)|=\cc$ and Lemma \ref{lem:dim-card} gives $\dim\ell^{2}(\Gamma)=\cc$, while reflexivity gives the same value $\cc$ for its dual; and $\dens(\ell^{2}(\Gamma))=\cc$ because $\Gamma$ itself is dense and of cardinality $\cc$, whereas $\dens(\ell^{\infty}(\N))=\cc$ by Lemma \ref{lem:cardinality-density} (it contains the image of the uncountable discrete set $\{0,1\}^{\N}$). For $\ell^{\infty}(\N)$ we have $\dim\ell^{\infty}=\cc$ and $\dim(\ell^{\infty})^{*}=2^{\cc}$ (Theorem \ref{thm:dim-counter}). Thus the two spaces have the same density character $\cc$ but duals of \emph{different} Hamel dimensions $\cc$ and $2^{\cc}$: the density character gives the stated general upper bound $2^{\cc}$ rather than an exact determination of $\dim X^{*}$.
\end{remark}

Organizing the above into \emph{replacement statements} for the original, we adopt the following version:

\begin{enumerate}
  \item[(a)] \textbf{Retain and emphasize what is correct}: for any normed space
    \[
      \dim X\ \le\ \dim X^{*}\ \le\ |X^{*}|\ \le\ \cc^{\dens(X)} ,
    \]
    and if $X$ is moreover infinite-dimensional and separable, then $\dim X^{*}=\cc$ (its dual
    is an infinite-dimensional Banach space with $|X^{*}|\le\cc^{\aleph_0}=\cc$); if in addition
    $X$ is \emph{complete}, then also $\dim X=\cc$, while $\dim X^{**}$ is only guaranteed to
    be at least $\cc$ and may be strictly larger (for instance, for $X=\ell^{1}$ we have
    $\dim X^{**}=2^{\cc}$, see Example \ref{ex:dim-chain}; Proposition \ref{prop:dim-correct}(ii)).
    Completeness is a sufficient hypothesis for the equality $\dim X=\dim X^{*}$ asserted
    above; the incomplete separable space $c_{00}$ satisfies $\dim c_{00}=\aleph_0$ whereas
    $\dim c_{00}^{*}=\dim\ell^{1}=\cc$, which shows that completeness cannot simply be omitted
    from this general statement.
  \item[(b)] \textbf{Rewrite [U2, Theorem 4] as}: ``$\dim X=\aleph_0\Rightarrow\dim X^{*}=2^{\aleph_0}$ (in the incomplete case); in general $\dim X^{*}\le\cc^{\dens(X)}$, and equality can be attained ($\ell^{\infty}$).''
  \item[(c)] \textbf{Delete the second equivalence of [U2, Theorem 5]}, or add the hypothesis ``$X$ is an infinite-dimensional separable Banach space'', in which case $\dim X=\dim X^{*}=\cc$ also holds. (The variant ``$\dim X^{*}=\cc=2^{\dim X}$'' is a definite error: for such a space $\dim X=\cc$, hence $2^{\dim X}=2^{\cc}>\cc$, and the incomplete case must be excluded as in (a).) Correspondingly, the last sentence ``$\dim D^{n}(X)=\dim X^{*}$ for all $n$'' must be deleted (Example \ref{ex:dim-chain}).
  \item[(d)] \textbf{If [U2, Corollary 4] is to be used, one must add the hypothesis} ``all iterated duals of $X$ are separable''; otherwise it can only be written as the chain of inequalities $\dim D^{k+1}(X)\le\cc^{\dens(D^{k}(X))}$.
  \item[(e)] \textbf{[U2, Lemma 4] fails even for separable Banach spaces.} Proposition \ref{prop:lemma4-fail} gives $X=\ell^{1}(\N)$ and $Y=\ell^{2}(\N)$ with equal finite quotient shapes and with $Y$ $\dim^{*}$-stable, but
    \[
      \dim X^{**}=2^{\cc}>\cc=\dim X^{*}=\dim Y.
    \]
    Thus separability is not a sufficient repair of this lemma.
  \item[(f)] \textbf{Delete the equality of [U3, Theorem 2(iii)]}, and replace it by the lower bound \eqref{eq:dim-tower-lower}.
  \item[(g)] \textbf{[U1, Lemma 4 (ii)] retains the partial hypothesis ``separable or bidual-like''}, and the sentence about $\ell^{\infty}$ following Question 1---where its author claims that $X=\ell^{\infty}$ ``is an example towards the affirmative answer''---must be \emph{deleted}: the very counting argument belonging to Lemma 4 (ii) gives the opposite, $|\mathcal Z(\ell^{\infty})|=2^{\cc}>|X|=\dim\ell^{\infty}=\cc$ (Remark \ref{rem:Z-count}). Hence [U1, Question 1] has a \emph{negative} answer, and $\ell^{\infty}$ is a counterexample to it rather than an example in its favour.
  \item[(h)] \textbf{The side (H1), ``same dimension $\Rightarrow$ same finite quotient shape'', must be \emph{deleted}}: by the classification \eqref{eq:shape-criterion} the quantity that classifies the finite quotient shape is $\dim X^{*}$, not $\dim X$, and two spaces of the same dimension may well have duals of different dimensions. The instance at $\ell^\infty$ and $\ell_p$ that (H1) would cover is thereby decided in the \emph{negative}: $\dim\ell^{\infty}=\dim\ell_p=\cc$, so (H1) would assert $\Sh_{\bar0}(\ell^\infty)=\Sh_{\bar0}(\ell_p)$, whereas $\dim(\ell^{\infty})^{*}=2^{\cc}\ne\cc=\dim(\ell_{p})^{*}$ for $1\le p<\infty$, so $\Sh_{\bar0}(\ell^{\infty})\ne\Sh_{\bar0}(\ell_{p})$ (Theorem \ref{thm:shape-classification}). This confirms the doubt expressed by the author of [U1] in \cite[the text before Remark 3.6]{uglesic2018lp}, where he wrote that ``it seems that $\Sh_{\bar0}(l^\infty)$ might differ from (the unique) type $\Sh_{\bar0}(l_p)$'', which apparently conflicts with his own Theorem 2 (i); what that paper actually proves about $\ell^\infty$ is only $\Sh_{\aleph_0}(l^\infty)=\Sh_{\bar0}(l^\infty)$, which is perfectly compatible with the above. What the original calls (H2), on the other hand, is retained: it is proved by the same classification (Corollary \ref{cor:shape-H2}).
\end{enumerate}

\subsubsection{Conclusions that are unaffected}\label{sub:dim-unaffected}

The above counterexamples are directed only at the group of assertions saying that ``the algebraic dimension is not raised''. The technical conclusions of the three preprints that have nothing to do with the algebraic dimension are unaffected; several of them have already been re-established in Section \ref{sec:prelim} in the form of independent proofs: the existence and the universal property of the hyperdual space ([U3, Theorem 1], i.e.\ Theorem \ref{thm:tower-exists} of this paper), the \emph{isometric embeddings} of the various $X^{(2n)}$ into $\widetilde X$ and of $\widetilde X$ into $D^{2}(\widetilde X)$ (the embedding part of [U3, Theorem 2(i),(ii)], i.e.\ \eqref{eq:cat-hyperdual-embed} of this paper), and the identity $\bigl(\widetilde X\bigr)^{*}\equiv\varprojlim X^{(2n+1)}$ ([U3, Corollary 2], i.e.\ Theorem \ref{thm:dir-dual-inv} of this paper).

Moreover, the \emph{positive} examples listed in [U2] are all correct: finite-dimensional spaces ($\dim X^{*}=\dim X<\infty$), separable spaces (Proposition \ref{prop:dim-correct}(ii)), and $\ell_p\ (1<p<\infty)$ together with finite-dimensional $L_p$ spaces (thus [U2, Example 2] in the original, whose notation differs from ours). For the James space $J$, every finite iterated dual is separable and infinite-dimensional, so $\dim J^{(n)}=\cc$ for every finite $n$; this is the dimension-stability property used here. (Here the deficiency direction of the first layer is the vector $u_1$ of Theorem \ref{thm:james-tower}, so that $J_1=J^{**}=J_J(J)\oplus\R u_1$ is a topological direct sum (Theorem \ref{thm:james-tower}); the symbol $R$ is reserved for the infinite-dimensional complement \emph{inside the tower space} $\widetilde J$.)

\subsection{\texorpdfstring{Fixing the category, and the commutation of $D^{2}$ with direct limits}{Fixing the category, and the commutation of D2 with direct limits}}\label{sec:cat}

The category-theoretic notation of this subsection is the same as in Section \ref{sec:prelim}: $\Ban$, $\Ban_1$, $\iBan$, and, after dropping the completeness requirement, $\NF$ and $\iNF$ (Definition \ref{def:ban-categories}). Three assertions in [U3] share a single picture:

\begin{quote}
\emph{the even-order dual functor $D^{2k}$ commutes with direct limits, and it commutes with them ``isometrically''.}
\end{quote}

In this section we show that the ``commutes'' half of this picture does not hold; and the failure is due not to some computational slip but to the choice of \emph{the category in which the direct limit is taken}---a point passed over in a single clause of the original. Following the uniform pattern of this section, we first quote the original statements (\S\ref{sub:cat-claims}), then explain why the choice of category is the heart of the matter (\S\ref{sub:cat-category}), give counterexamples (\S\ref{sub:cat-counter}), write down the correct picture (\S\ref{sub:cat-correct}), and finally explain which conclusions are unaffected (\S\ref{sub:cat-unaffected}).

\subsubsection{The original assertions}\label{sub:cat-claims}

We first identify how the object of the direct limit is constructed in [U3] (the English text is quoted verbatim from the original; the location is given in brackets).

\begin{pquote}
(the construction of Lemma 1, pp.~5--6) ``\dots\ an equivalence relation on $\bigsqcup_{n\in\N}X_n$.
Let \dots\ be the corresponding quotient set.''\\[3pt]
(p.~6) ``where $x=[x_{n(x)}]$ and $x_{n(x)}\in X_{n(x)}$ is the \emph{grain} of $x$.
Then, clearly, $\|x\|=\|x_{n(x)}\|_{n(x)}=\|x_n\|$, $n\ge n(x)$. \dots\ Thus,
$X\equiv(X,\|\cdot\|)$ is a \emph{normed space} \dots''
\end{pquote}

Note the last sentence: the object explicitly constructed out of $\bigsqcup/\sim$ is only asserted to be a \emph{normed} space, and \emph{no completion is taken}. The three assertions below all refer to this uncompleted object.

\begin{pquote}
\textbf{[U3, Lemma 4]} (p.~9): ``\dots\ such that, for every
$X=(X_n,i_{nn'},\N)\in\Ob(\text{seq-}\iNF)$,\ $\varinjlim D^{2k}[X]\equiv(X',i'_n)
\cong(D^{2k}(X),D^{2k}(i_n))$ belongs to $D^{2k}[\iNF]$.''\\[4pt]
\textbf{[U3, Theorem 1 (i)]} (p.~11): ``Each restriction functor
$D^{2k}:\iNF\to D^{2k}[\iNF]$ \dots\ is continuous, i.e., it commutes with the direct limit:
$D^{2k}(\varinjlim X)\cong\varinjlim D^{2k}[X]$ isometrically.''\\[4pt]
\textbf{[U3, Theorem 2 (i)]} (p.~15): ``$\widetilde X$ is a bidualic Banach space,
i.e., $D^2(\widetilde X)\cong\widetilde X$ in $\NF$ \dots''
\end{pquote}

Two sentences in the proof of Lemma 4 are the key to the whole matter (pp.~9--10):

\begin{pquote}
(p.~9) ``\dots\ (\emph{implying that $X'$ is isomorphic to $D^{2k}(X)$ in $\iNF$,
and hence, a Banach space}).''\\[3pt]
(p.~10) ``\dots\ it follows that $X'\cong D^{2k}(X)$ in
$D^{2k}[\iNF]\subseteq\iBF$, and the statement for objects follows in general.''
\end{pquote}

Moreover, the proof of Theorem 2 (i) declares at the outset that ``it suffices to prove it for the canonical direct limit space'', that is, it suffices to discuss the uncompleted object $\bigsqcup/\sim$ above. Finally, [U3] itself raises a reservation in its Remark 1 (b) (p.~17): ``If it could hold an appropriate analogue of Lemma 5 for the isometries (in the very special case of the proof of Theorem 2 (i)), then $\widetilde X$ would be a parareflexive space.'' In other words, its author himself lists the \emph{isometric} version of Theorem 2 (i) as undecided.

\subsubsection{Fixing the category}\label{sub:cat-category}

The relevant assertions of [U3] are all made in $\NF$/$\iNF$, and it is inside them that the question ``is the direct limit complete?'' is discussed. In the framework of this paper, this is unnecessary, and it is precisely the source of the difficulty. There is only one reason:

\begin{quote}
\emph{the dual of a normed space is \emph{always} a Banach space.}
\end{quote}

Consequently, from the first dual onward the whole dual chain $X^{(2)}\subset X^{(4)}\subset\cdots$ lies in the category of Banach spaces. The hyperdual space is the direct limit of this chain, and the identification of this limit rests on the following three facts, all established in Section \ref{sec:prelim}:

\begin{enumerate}
  \item[(i)] by Lemma \ref{lem:isometric-colim}(iii), the direct limit in $\iBan$ of an isometric direct system is isometrically isomorphic to the \emph{completion} of $\bigsqcup/\sim$ (equipped with the grain norm); in particular it is a Banach space;
  \item[(ii)] by Remark \ref{rem:tower-isometric}, all the connecting maps of the dual chain are isometries, so it is indeed an isometric direct system in $\iBan$, and (i) can be applied directly;
  \item[(iii)] by Theorem \ref{thm:ban1-limits}, all direct and inverse limits exist in $\Ban_1$; for the isometric direct systems used here, Lemma \ref{lem:isometric-colim} additionally gives the direct limit in $\iBan$. These are separate assertions and are not combined into a single general existence statement: in particular, no general existence theorem for inverse limits in $\iBan$ is being invoked. Nor need such limits exist in $\Ban$ (Remark \ref{rem:why-ban1} and \eqref{eq:colim-hom}).
\end{enumerate}

Hence this paper adopts the convention stated at the end of Section \ref{sec:prelim}: the direct limit of the dual chain is always taken in $\iBan$, i.e.\ \emph{the object is completed}; the inverse limit is taken in $\Ban_1$, with the explicit description given in Remark \ref{rem:inv-limit-form}. This is not a technical preference but a necessity: if one retreats to $\iNF$, the limit object is merely a normed space, whereas $D^{2k}(\cdot)$ is always complete, and the two cannot even be isomorphic (see \S\ref{sub:cat-counter}). Conversely, once the object is taken to be the completion, the ``Banach space'' part of [U3, Theorem 2 (i)] follows directly from the construction of Theorem \ref{thm:tower-exists}: the completed limit is a Banach space by construction, and no separate argument is needed. Two points must, however, be kept separate from this. Completing the limit does \emph{not} by itself place it in $D^{2k}[\iNF]$, that is, does not make it \emph{isometrically a dual space}: completeness is necessary but not sufficient for being a dual space (\S\ref{sub:cat-correct}). And the ``Banach space'' half of [U3, Theorem 2 (i)] is thereby retained, whereas the accompanying $\cong$ is a separate assertion that does not follow from the convention (\S\ref{sub:cat-correct}).

Let us now draw the ``two orders of taking limits'' on a single diagram. Let $(X_n,\kappa_{n,m})$ be an isometric direct system in $\iBan$, fix $k\ge0$, and put $X_\infty:=\varinjlim_n X_n$. Taking the limit first and then applying $D^{2k}$, and applying $D^{2k}$ first and then taking the limit, yield two different objects, between which there is a family of natural comparison maps:

\begin{equation}\label{eq:cat-square}
\begin{tikzcd}[column sep=6.0em, row sep=3.4em]
  X_n \arrow[r, hook, "\kappa_{n,\infty}"] \arrow[d, hook, "J^{(k)}_{X_n}"']
    & X_\infty=\varinjlim_n X_n \arrow[d, hook, "J^{(k)}_{X_\infty}"] \\
  D^{2k}(X_n) \arrow[r, hook, "D^{2k}(\kappa_{n,\infty})"']
    & D^{2k}(X_\infty)
\end{tikzcd}
\end{equation}

Here a distinction of \emph{role} has to be observed, because the same symbol $D^{2k}$ appears in the diagram in two different senses. The \emph{lower row} is the upper row with the \emph{functor} $D^{2k}$ applied to it: the functor acts on the objects $X_n$, $X_\infty$ and on the morphism $\kappa_{n,\infty}$, and it is precisely its action on that morphism that the lower horizontal arrow records. The two \emph{vertical} arrows are something else altogether: they are concrete linear maps between concrete objects, namely the iterated canonical embeddings
\[
  J^{(k)}_{E}:=J_{E^{(2k-2)}}\circ\cdots\circ J_{E^{(2)}}\circ J_{E}:E\longrightarrow E^{(2k)}=D^{2k}(E)\qquad(k\ge1),
\]
with $J^{(0)}_{E}:=\id_E$; thus $J^{(1)}_{E}=J_E$, and for $k=0$ the diagram collapses to the identity and carries no information. These $J^{(k)}_{E}$ are \emph{not} the functor $D^{2k}$: labelling a vertical arrow ``$D^{2k}$'' would conflate the functor's action on objects and morphisms with a particular linear map between objects. By naturality of the canonical embedding the square commutes,
\[
  D^{2k}(\kappa_{n,\infty})\circ J^{(k)}_{X_n}=J^{(k)}_{X_\infty}\circ\kappa_{n,\infty},
\]
as follows from iterating the identity $T^{**}J_E=J_FT$; the two directions in which $D^{2k}$ may be read are exactly the two axes of \eqref{eq:cat-square}.

Both the upper and the lower rows of Diagram \eqref{eq:cat-square} consist of isometric embeddings (Lemma \ref{lem:isometry}), and so does each column. Writing out the universal property of this family of comparison maps is precisely the content of Proposition \ref{prop:colim-embed}: there is a unique isometric embedding

\begin{equation}\label{eq:cat-iota}
  \iota_k:\ \varinjlim_n D^{2k}[X_n]\ \hookrightarrow\ D^{2k}(X_\infty)
  =D^{2k}\Bigl(\varinjlim_n X_n\Bigr),
\end{equation}

such that $D^{2k}(\kappa_{n,\infty})=\iota_k\circ\lambda_n$, where $\lambda_n:D^{2k}(X_n)\to\varinjlim_n D^{2k}[X_n]$ is the canonical map of the direct limit. Proposition \ref{prop:colim-embed} also states that $\iota_k$ is \emph{in general not surjective}: therefore the asserted equality ``$D^{2k}(\varinjlim)\cong\varinjlim D^{2k}$'' does not hold. This is where the problem with [U3, Theorem 1 (i)] lies, and the next subsection gives a counterexample.

\subsubsection{Counterexamples}\label{sub:cat-counter}

The counterexample requires only finite-dimensional spaces.

\begin{example}[$D^{2}$ does not commute with direct limits]\label{ex:cat-cn}
Take the direct sequence in $\iBan$
\[
  X_1=(\C^1,\|\cdot\|_\infty)\xhookrightarrow{\ i_{12}\ }
  X_2=(\C^2,\|\cdot\|_\infty)\xhookrightarrow{\ i_{23}\ }\cdots,
  \qquad X_n=(\C^n,\|\cdot\|_\infty),
\]
where $\|(\xi_1,\dots,\xi_n)\|_\infty=\max_{1\le j\le n}|\xi_j|$ and the connecting maps are the \emph{zero-padding} isometric embeddings
\[
  i_{n,n+1}(\xi_1,\dots,\xi_n)=(\xi_1,\dots,\xi_n,0).
\]
On finite-dimensional spaces $T^{**}=T$, so $D^2(X_n)=X_n$ and $D^2(i_{nn'})=i_{nn'}$, i.e.\ $D^2[X]=X$; in particular the bidual of every connecting map is again the zero-padding map, and the bidual system coincides with the original direct system. The algebraic union of the system is $c_{00}$ with its supremum norm, and its completion is $c_0$. Hence
\[
  \varinjlim_n D^2[X_n]=\varinjlim_n X_n
  =\begin{cases}
     c_{00}\ \text{(supremum norm, incomplete)}, & \text{in }\iNF,\\[3pt]
     c_{0}, & \text{in }\iBan\text{ (completion taken)},
   \end{cases}
\]
and both are \emph{separable}. On the other hand, the bidual of the completed colimit is
\[
  D^2(X_\infty)=D^2(c_0)=c_0^{**}=\ell^{\infty},
\]
which is non-separable (Example \ref{ex:c0}). Thus the completed colimit of the bidual system is $c_0$, whereas the bidual of the completed colimit is $c_0^{**}=\ell^{\infty}$: these spaces differ in separability and therefore are not isomorphic as Banach spaces. For the uncompleted colimit in $\iNF$, the colimit of the finite-dimensional bidual system is $c_{00}$, whereas the bidual of $c_{00}$ with the supremum norm is $\ell^{\infty}$; here completeness alone already separates the two normed spaces. So the asserted commutation with the ambient direct limit fails under both conventions, and
\[
  D^{2}\Bigl(\varinjlim_n X_n\Bigr)\ \not\cong\ \varinjlim_n D^{2}[X_n].
\]
\textbf{In particular, both [U3, Lemma 4] and [U3, Theorem 1 (i)] fail.}
\end{example}

The branch of Example \ref{ex:cat-cn} in which the limit is taken in $\iBan$ shows that \emph{adding the completion does not repair the failure}: the reason for the failure is not completeness but that $D^{2}$ does not commute with direct limits at all. This is a difficulty independent of the possible non-existence of colimits in $\Ban$ (Remark \ref{rem:why-ban1}).

\begin{example}[``and hence, a Banach space'' is false]\label{ex:cat-banach}
Take again the sequence of Example \ref{ex:cat-cn}, but look only at the object $X'=c_{00}$ in $\iNF$, equipped with the supremum norm. The parenthetical claim of Lemma 4 is that $X'$ is isomorphic to $D^{2k}(X)$, ``and hence, a Banach space''. In the supremum norm the closure of $c_{00}$ is $c_{0}$: the finite truncations of $(1/j)_{j\ge1}$ form a Cauchy sequence in $c_{00}$ whose limit lies in $c_{0}\setminus c_{00}$, so $c_{00}$ is \emph{incomplete}. On the other hand, for $k\ge1$ every space $D^{2k}(X)$ is the dual of a Banach space (Lemma \ref{lem:dual-banach}) and is therefore always complete. Completeness is preserved under (linear) isomorphism, so the two cannot be isomorphic, and the clause ``hence a Banach space'' is simply false. (The constant sequence $\mathbf1$ stays at distance $1$ from $c_{00}$ in $\ell^{\infty}$, which separately shows that $c_{00}$ is not dense in $\ell^{\infty}$.)
\end{example}

The last counterexample puts the above phenomenon back into the dual chain itself, and it is the basis for the failure of Theorem 2 (i) in the next subsection.

\begin{example}[the canonical hyperdual of a non-reflexive Banach space is not a Banach space]\label{ex:cat-canonical}
Let $X$ be a non-reflexive \emph{Banach} space. By Theorem \ref{thm:chain} and Corollary \ref{cor:strict-chain}, the tower $X\subsetneq X^{(2)}\subsetneq X^{(4)}\subsetneq\cdots$ is strictly increasing, whose connecting maps are isometric embeddings. Equip the union $U=\bigcup_nX^{(2n)}$ with the grain norm (this is precisely the construction of the ``canonical direct limit'' in [U3, Lemma 1]). Each $X^{(2n)}$ is complete, hence a closed subspace of $U$; and it is a proper closed subspace of the next term, hence has no interior point in $U$, i.e.\ it is nowhere dense. If $U$ were complete, then $U$ could be written as a countable union of nowhere dense sets, contradicting the Baire category theorem. Hence $U$ is \emph{incomplete}: \textbf{the canonical hyperdual (the uncompleted version) of a non-reflexive Banach space is not a Banach space.}
\end{example}

The hypothesis ``Banach'' is indispensable and cannot be weakened to ``normed'': the non-reflexive normed space $E^{\infty}$ of Example \ref{ex:einf} has dual chain $E^{\infty}\subsetneq\ell^{2}=\ell^{2}=\cdots$, which stabilizes from the second term onward, so its union is the complete space $\ell^{2}$ and the Baire argument above does not apply (indeed, the tower is not strictly increasing). It is completeness of $X$ that forces the tower to keep increasing at every level, which is exactly what the Baire category argument uses.

\subsubsection{The comparison map and the canonical embedding}\label{sub:cat-correct}

The uncompleted object $U_X$ and its completion $\widetilde X$ must be treated separately. For a non-reflexive Banach space $X$, Example \ref{ex:cat-canonical} proves that $U_X$ is incomplete. It therefore refutes the Banach-space assertion for the uncompleted object used in [U3, Theorem 2 (i)]. Passing to the completion produces a Banach space, but changes the object under discussion. Example \ref{ex:cat-cn} shows that completion also leaves the general commutation assertion false.

A universal property in an image category concerns only the objects and morphisms of that category. It must be distinguished from the universal property in the ambient category. In particular, an identification obtained in $D^{2k}[\iNF]$ does not by itself identify the explicitly constructed ambient direct limit. Instead, we use the comparison map proved in Proposition \ref{prop:colim-embed}.

Put $A_n=X^{(2n)}$, $a_n=J_{A_n}$, and
\[
T=\varinjlim(A_n,a_n)=\widetilde X,
\qquad
S=\varinjlim(A_n^{**},a_n^{**}).
\]
Write $\kappa_{n,\infty}:A_n\to T$ and $\lambda_n:A_n^{**}\to S$ for the structure maps. Proposition \ref{prop:colim-embed} gives an isometric embedding
\begin{equation}\label{eq:cat-correct}
\iota:S\longrightarrow T^{**},\qquad
\iota\lambda_n=\kappa_{n,\infty}^{**}.
\end{equation}
Although $A_n^{**}=A_{n+1}$ at the level of objects, $a_n^{**}$ is generally different from $a_{n+1}$. For example, take $X=c_0$ and let $\mu\in(\ell^\infty)^*$ be the functional defined by a free ultrafilter on $\N$. Then $\mu|_{c_0}=0$ and $\mu(\mathbf 1)=1$, so
\[
\bigl((J_{c_0})^{**}\mathbf 1\bigr)(\mu)=0,
\qquad
\bigl(J_{\ell^\infty}\mathbf 1\bigr)(\mu)=1.
\]
Thus $S$ is not identified with $T$ merely by deleting the first object of the tower; the connecting maps have to be included in any such identification.

Naturality gives
\[
a_n^{**}a_n=a_{n+1}a_n.
\]
Consequently $\phi_n=a_n:A_n\to A_n^{**}$ defines a morphism between the two direct systems and induces an isometric embedding $\bar\phi:T\to S$. For every $n$,
\[
\iota\bar\phi\kappa_{n,\infty}
=\iota\lambda_nJ_{A_n}
=\kappa_{n,\infty}^{**}J_{A_n}
=J_T\kappa_{n,\infty}.
\]
The ranges of the $\kappa_{n,\infty}$ have dense union in $T$, and hence
\[
\iota\bar\phi=J_T.
\]
The resulting embedding of the tower into its bidual is therefore exactly its canonical embedding:
\begin{equation}\label{eq:cat-hyperdual-embed}
\widetilde X\ \xhookrightarrow{\ J_{\widetilde X}\ }\ D^{2}(\widetilde X).
\end{equation}
By Proposition \ref{prop:limit-reflexive},
\[
J_{\widetilde X}\text{ is surjective}
\quad\Longleftrightarrow\quad
\widetilde X\text{ is reflexive}
\quad\Longleftrightarrow\quad
X\text{ is reflexive}.
\]
In particular, the canonical embedding of the completed tower is not surjective when $X$ is non-reflexive.

The existence of some bounded linear isomorphism $\widetilde X\to\widetilde X^{**}$, or of some isometric isomorphism between these spaces, is a different assertion. The second of these assertions is the property called parareflexivity in [U3]. The calculation above determines the canonical embedding, rather than all possible isomorphisms. Likewise, the failure of commutation for the finite-dimensional system of Example \ref{ex:cat-cn} does not by itself settle an abstract isomorphism assertion for every completed dual tower. Any such additional assertion requires its own proof or counterexample. The isometric question mentioned in [U3, Remark 1 (b)] concerns the existence of an isometric isomorphism, not the surjectivity of the specified canonical embedding considered here.

\subsubsection{Our replacement statements}\label{sub:cat-alt}

\begin{enumerate}
  \item[(a)] \textbf{Take the object of the direct limit to be ``the completion of $\bigsqcup/\sim$''}, i.e.\ the convention of this paper; in this way the hyperdual space is indeed a Banach space, and the ``Banach space'' part of [U3, Theorem 2 (i)] is retained (which is also consistent with the practice of \cite{semadeni1965,gwizdek2023}).
  \item[(b)] \textbf{Replace the equality of [U3, Theorem 1 (i)] by an isometric embedding}: by Example \ref{ex:cat-cn}, $D^{2k}(\varinjlim X)\cong\varinjlim D^{2k}[X]$ does not hold; the correct statement is Proposition \ref{prop:colim-embed}---$D^{2k}$ sends the morphisms of a direct system to morphisms of the direct limit (functoriality), and the comparison map $\iota_k$ is an isometric embedding which is in general not surjective.
  \item[(c)] \textbf{Downgrade the $\cong$ of [U3, Theorem 2 (i)] to \eqref{eq:cat-hyperdual-embed}}: for the uncompleted object of [U3], Example \ref{ex:cat-canonical} disproves the Banach-space assertion whenever $X$ is a non-reflexive Banach space. For the completed tower used in this paper, Theorem \ref{thm:tower-exists} establishes completeness, and \S\ref{sub:cat-correct} identifies the resulting specified embedding with $J_{\widetilde X}$. Its surjectivity is completely characterized by Proposition \ref{prop:limit-reflexive}: it holds exactly when $X$ is reflexive (Remark \ref{rem:reflexive-vs-iso}), which is strictly stronger than the \emph{parareflexivity} of [U3, Remark 1 (b)]---the latter asks only for the existence of \emph{some} isometric isomorphism $T(\widetilde X)\equiv T(\widetilde X)^{**}$ (compare Example \ref{ex:james}). The existence of an arbitrary bounded or isometric isomorphism between $\widetilde X$ and $\widetilde X^{**}$ is a separate question and is not established by this comparison-map argument; see also the discussion of the sandwiching between $\widetilde X$ and $D^{2}(\widetilde X)$ in \cite{dualchain}.
  \item[(d)] \textbf{Parts that need restating accordingly}: the proof of [U3, Theorem 2 (i)] (which depends on Theorem 1 (i)) and Theorem 3 (iii) (where the original says that it suffices to apply the proof of Lemma 4 to $\widetilde D$) must both be rewritten.
  \item[(e)] \textbf{Positive results that can be retained}: even though commutativity fails, ``every $X^{(2n)}$ embeds isometrically into $\widetilde X$'' (the universal property of Theorem \ref{thm:tower-exists}) and ``$\widetilde X$ contains a chain of retracts'' ([U3, Theorem 2 (ii)], see \S\ref{sec:wording}) do not depend on Lemma 4 and are retained; the statement about maps on p.~10 (``$\varinjlim f$ preserves isometries/isomorphisms'') and the identity $\|\varinjlim f\|=\|f\|=\lim\|f_n\|$ on p.~9 are also correct.
\end{enumerate}

\subsubsection{Conclusions that are unaffected}\label{sub:cat-unaffected}

The above counterexamples are directed only at the group of assertions saying that ``$D^{2k}$ commutes with direct limits''. Section \ref{sec:prelim} has already re-established from the positive side the part that is actually needed, and none of it depends on Lemma 4 or Theorem 1 (i):

\begin{enumerate}
  \item[(i)] the existence of the tower space, the fact that it is a Banach space, and its universal property (Theorem \ref{thm:tower-exists}, corresponding to the correct part of [U3, Theorem 1]);
  \item[(ii)] $\bigl(\widetilde X\bigr)^{*}\equiv\varprojlim_nX^{(2n+1)}$ (Theorem \ref{thm:dir-dual-inv}, corresponding to [U3, Corollary 2]);
  \item[(iii)] the functoriality and norm preservation of the limit functors $\Dir,\Inv$, the natural transformation $\eta$, and $\Dir(T)^{*}=\Inv(T)$, $\Dir(X)\equiv\Dir(X^{**})$ (Theorem \ref{thm:functors});
  \item[(iv)] the operator properties preserved by the limit: direct-sum decompositions, the spectrum and the Fredholm index (Proposition \ref{prop:preserved});
  \item[(v)] the fact that non-reflexivity does not disappear after taking the limit (Proposition \ref{prop:limit-reflexive}) and the passage of separability (Proposition \ref{prop:limit-separable}).
\end{enumerate}

The independently established statements retained here are the construction and universal property of the completed tower, the isometric embeddings of its layers, the duality between the completed direct limit and the bounded inverse limit, and the properties proved in Section \ref{sec:prelim} or explicitly cited there---the preservation statement of Proposition \ref{prop:preserved} is quoted from Gwizdek \cite{gwizdek2023}. The commutation assertion is refuted by Example \ref{ex:cat-cn}, and the completeness of the uncompleted tower is refuted by Example \ref{ex:cat-canonical} for non-reflexive Banach spaces. These counterexamples are kept separate from the existence of an arbitrary isomorphism between a completed tower and its bidual. The preservation assertions concerning parareflexivity and quasi-reflexivity in [U3, Theorem 3 (vii)] are \emph{not} used as established results here, because no independent proof of those assertions is supplied in this paper. Reflexivity itself is covered by Proposition \ref{prop:limit-reflexive}.

\subsection{Questions of formulation}\label{sec:wording}

This section deals with two issues at the level of \emph{formulation}; neither of them is a substantive error. First, one turn of phrase in [U3, Theorem 2 (ii)] concerning direct-sum decompositions admits several literal readings; we adopt the \emph{correct} reading, and explain why the remaining readings cannot stand. Second, we flag a number of misprints in the three preprints, purely as a caution, so that the reader is not perplexed when checking the originals. At the end of this section we give the table comparing the \emph{terminology and notation} used in this paper with that of the three preprints, as promised in Remark \ref{rem:naming}.

\subsubsection{The original text and its location}

\begin{pquote}
\textbf{[U3, Theorem 2 (ii)]} (pp.~15--16): ``all $D^{2k}(X)$, $k \in \N$, embed
isometrically into $\widetilde X$ making an increasing sequence of retracts and
retracts of $\widetilde X$ in $(\BF)_1$, implying that, for each $k \in \N$,
$\widetilde X \cong D^{2k}(X) \dotplus N(r_{2k})$ isometrically''
\end{pquote}
where $i_{2k} : D^{2k}(X) \to \widetilde X$ is the isometric embedding, $r_{2k} \in (\BF)_1(\widetilde X, D^{2k}(X))$ is a retraction of $i_{2k}$ ($\|r_{2k}\| \le 1$), and $N(r_{2k}) = \Ker r_{2k}$.

Outside of quotations we always switch to the notation of this paper. The correspondence between the two systems is: the original's $D^{2k}(X)$ is our $X^{(2k)}$; the original's isometric embedding $i_{2k}:D^{2k}(X)\to\widetilde X$ into the hyperdual corresponds to our structure map $\kappa_{k,\infty}:X^{(2k)}\to\widetilde X$ (Definition \ref{def:tower-space}), whereas the original's \emph{canonical embedding} $j_{2k}:D^{2k}(X)\to D^{2k+2}(X)$ corresponds to our adjacent embedding $J_{X^{(2k)}}:X^{(2k)}\to X^{(2k+2)}$ (Definition \ref{def:canonical}); and the original's $\widetilde X$ is the ``hyperdual space'', which differs from our tower space $\widetilde X$ (Definition \ref{def:tower-space}) only by a completion. The complete comparison is given at the end of this subsection.

\subsubsection{The topological direct sum and the norm geometry of the decomposition}

\begin{proposition}[Retractions onto higher layers]\label{prop:tower-layer-retractions}
Let $X$ be a Banach space and let $X_n=X^{(2n)}$. For every $k\geq1$ there exists a contraction
\[
r_{2k}:\widetilde X\longrightarrow X_k,
\qquad r_{2k}\kappa_{k,\infty}=\id_{X_k}.
\]
Consequently $P_k=\kappa_{k,\infty}r_{2k}$ is a contractive projection and
\[
\widetilde X=\kappa_{k,\infty}(X_k)\oplus\ker r_{2k}
\]
is a topological direct sum.
\end{proposition}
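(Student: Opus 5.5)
The plan is to imitate the construction of $q$ in the proof of Theorem \ref{thm:james-tower}, but now with \emph{contractive} retractions at every level from $k$ upward, so that no constant $C_0$ appears. The key input is that for $n\ge1$ the layer $X_n=X^{(2n)}$ is a dual space, $X_n=(X^{(2n-1)})^{*}$. Consequently the Dixmier retraction
\[
  \rho_n:=\bigl(J_{X^{(2n-1)}}\bigr)^{*}:X_{n+1}=X_n^{**}\longrightarrow X_n
\]
satisfies $\rho_n\kappa_{n,n+1}=\rho_nJ_{X_n}=\id_{X_n}$ and $\|\rho_n\|\le1$. This follows from the identity $J_F^{*}J_{F^{*}}=\id_{F^{*}}$ with $F=X^{(2n-1)}$, exactly as used in that proof. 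Since $k\ge1$, every $\rho_n$ with $n\ge k$ is available. This is precisely why the statement excludes $k=0$, where $X_0=X$ need not be a dual space.

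Next I would fix $k\ge1$ and define $q_n:X_n\to X_k$ for $n\ge k$ by $q_k:=\id_{X_k}$ and $q_n:=\rho_k\rho_{k+1}\cdots\rho_{n-1}$ for $n>k$. Then $\|q_n\|\le1$, and
\[
q_{n+1}\kappa_{n,n+1}=q_n\rho_n\kappa_{n,n+1}=q_n,
\]
so the family $(q_n)_{n\ge k}$ is a compatible family of contractions on the tail of the tower system. For $n<k$, put $q_n:=q_k\kappa_{n,k}=\kappa_{n,k}$, which is isometric and hence contractive, and still compatible. The universal property of Theorem \ref{thm:tower-exists} (equivalently, Lemma \ref{lem:isometric-colim}(iii)) then yields a unique contraction $r_{2k}:\widetilde X\to X_k$ with $r_{2k}\kappa_{n,\infty}=q_n$ for all $n$. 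In particular $r_{2k}\kappa_{k,\infty}=q_k=\id_{X_k}$.

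The remaining assertions are formal. The operator $P_k=\kappa_{k,\infty}r_{2k}$ satisfies
\[
P_k^2=\kappa_{k,\infty}(r_{2k}\kappa_{k,\infty})r_{2k}=P_k
\]
and $\|P_k\|\le\|\kappa_{k,\infty}\|\,\|r_{2k}\|\le1$. Its range is $\kappa_{k,\infty}(X_k)$, which is closed because $\kappa_{k,\infty}$ is an isometry on a Banach space. Its kernel equals $\ker r_{2k}$ because $\kappa_{k,\infty}$ is injective. A bounded projection splits $\widetilde X$ as the topological direct sum of its range and its kernel, and this gives $\widetilde X=\kappa_{k,\infty}(X_k)\oplus\ker r_{2k}$.

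The only genuine point, and the expected obstacle, is obtaining a \emph{uniform} bound on the $q_n$. This is exactly where the James-space argument needed $\|\rho_0\|$, a constant that need not be $1$. Here it is handled by starting the composition at level $k\ge1$, where all factors are Dixmier projections of norm at most $1$. I would also double-check the identity $J_F^{*}J_{F^{*}}=\id_{F^{*}}$ once: evaluating at $f\in F^{*}$ and $x\in F$ gives $(J_{F^{*}}f)(J_Fx)=(J_Fx)(f)=f(x)$, which confirms it.
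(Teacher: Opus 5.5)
Your proposal is correct and matches the paper's proof essentially step for step. The paper also uses the contractive retractions $\rho_n=(J_{X^{(2n-1)}})^{*}$ for $n\ge1$, builds the same compatible family ($\kappa_{n,k}$ below level $k$, the identity at level $k$, and $\rho_k\cdots\rho_{n-1}$ above it), and obtains $r_{2k}$ from the universal property of the completed tower, after which the projection and the topological direct sum follow formally.
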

\begin{proof}
For $n\geq1$, the space $X_n$ is the dual of $X^{(2n-1)}$. Set
\[
\rho_n=(J_{X^{(2n-1)}})^*:X_{n+1}\longrightarrow X_n.
\]
Then $\|\rho_n\|\leq1$ and $\rho_nJ_{X_n}=\id_{X_n}$, by $J_F^*J_{F^*}=\id_{F^*}$. Fix $k\geq1$ and define
\[
q_{k,n}=\begin{cases}
\kappa_{n,k},&n<k,\\
\id_{X_k},&n=k,\\
\rho_k\rho_{k+1}\cdots\rho_{n-1},&n>k.
\end{cases}
\]
These maps are contractions and satisfy $q_{k,n+1}\kappa_{n,n+1}=q_{k,n}$. The universal property of the completed tower gives a contraction $r_{2k}$ with $r_{2k}\kappa_{n,\infty}=q_{k,n}$. Taking $n=k$ proves the retraction identity.

It follows that $P_k^2=P_k$, its range is $\kappa_{k,\infty}(X_k)$, and its kernel is $\ker r_{2k}$. Both subspaces are closed. The addition map from their product to $\widetilde X$ (the product being equipped with the maximum norm) is bounded, and its inverse is
\[
x\longmapsto(P_kx,(\id_{\widetilde X}-P_k)x),
\]
which is bounded as well. This proves the asserted topological direct sum.
\end{proof}

Let $P$ be a bounded projection on a Banach space $E$. The topological decomposition $E=R(P)\oplus\ker P$ is isometric for the maximum direct-sum norm exactly when
\[
\|x\|=\max\{\|Px\|,\|(\id_E-P)x\|\}\qquad(x\in E).
\]
Such a projection is an $M$-projection. It is isometric for the sum norm exactly when
\[
\|x\|=\|Px\|+\|(\id_E-P)x\|\qquad(x\in E),
\]
in which case $P$ is an $L$-projection. Both conditions imply $\|P\|\leq1$ and $\|\id_E-P\|\leq1$; the pair of inequalities alone is the weaker condition of bicontractivity.

For example, on $E=\mathbb R^2$ use
\[
\|(a,b)\|=\max\{|a|,|b|,|a-b|\},\qquad P(a,b)=(a,0).
\]
Then $\|P\|=\|\id_E-P\|=1$. The vector $(1,-1)$ disproves the $M$-projection identity, and $(1,1)$ disproves the $L$-projection identity. Therefore bicontractivity does not supply either of the specified isometric direct-sum norms.

For the tower, the bounded retraction gives the topological decomposition
\[
\widetilde X=\kappa_{k,\infty}(X_k)\oplus\ker r_{2k},
\qquad P_k=\kappa_{k,\infty}r_{2k}.
\]
The inclusions of its two summands preserve their inherited norms. A maximum-norm isometric decomposition through this particular range would require $P_k$ to be an $M$-projection, and Proposition \ref{prop:no-M-projection} disproves the universal existence of such a choice.

\subsubsection{The interpretation of the direct-sum wording}

Read literally, the sentence quoted above admits several readings of the symbol ``$\dotplus$'' and of the word ``isometrically''. We retain the topological direct-sum interpretation, with the inherited norms on the two subspaces: ``$\dotplus$'' denotes the topological direct sum, and ``isometrically'' qualifies the two embeddings $\kappa_{k,\infty}$ and the inclusion of $\ker r_{2k}$. Requiring the maximum norm or the sum norm is a separate geometric assertion. Allowing an arbitrary transported norm on an external direct sum merely restates the original norm and introduces no additional geometric conclusion. Accordingly, every isometric direct-sum assertion in this paper specifies the external norm under consideration.

Under this topological interpretation the original sentence is merely \emph{loosely worded}, not substantively wrong, and its conclusion holds unconditionally by Proposition \ref{prop:tower-layer-retractions}. The strengthened readings that impose an isometric direct-sum norm, by contrast, require the extra geometry of an $M$- or an $L$-projection and are false in general (the two-dimensional example above); and the reading that only asks for ``\emph{some} isometric decomposition'' demands a construction that [U3] does not provide. This is precisely why we classify this item as a ``question of formulation'' rather than a ``substantive error'', in contrast to \S\ref{sec:dim} and \S\ref{sec:cat}.

\subsubsection{Aligning the conventions on norms}

This paper uses the grain norm of the tower for $\widetilde X$, that is, the completion norm of the direct limit of normed spaces (Definition \ref{def:tower-space}); the original's $\widetilde X$ is an uncompleted quotient space, and the two differ by a completion. Under the topological interpretation, ``$\dotplus$'' only asserts the algebraic decomposition into two closed subspaces and the boundedness of the projection, and introduces no new norm, so the two conventions do not conflict: the grain norm on $\widetilde X$ is always the one in force, and the direct sum is read only as a topological direct sum.

\subsubsection{The statement we adopt}

Let $r_{2k} : \widetilde X \to X_k$ satisfy $r_{2k}\kappa_{k,\infty} = \id_{X_k}$. Then
    \[
      \widetilde X = \kappa_{k,\infty}(X_k) \oplus \ker r_{2k}
    \]
is a topological direct sum, and both $\kappa_{k,\infty}$ and the inclusion of $\ker r_{2k}$ are isometric. This is a corollary of Proposition \ref{prop:tower-layer-retractions}, holds unconditionally, and is the correct content of [U3, Theorem 2 (ii)].

\paragraph{Conditional statement about $M$-ideals.}
For a closed subspace $Y\subset E$, being an $M$-ideal means that $Y^\perp\subset E^*$ is the range of an $L$-projection on $E^*$ (Alfsen--Effros \cite{alfsen-effros1972}; Harmand--Werner--Werner \cite{hww1993}; the idea goes back to Kakutani's duality theory for abstract $L$-spaces and $M$-spaces \cite{kakutani1941}). This is an additional property of the inclusion $Y\subset E$, not a property asserted here for every pair of successive tower layers. When it holds, it gives a decomposition on the dual side. It does not by itself give a projection from $E$ onto $Y$. The standard example is $c_0\subset\ell^\infty$: $c_0$ is an $M$-ideal, but is not complemented in $\ell^\infty$ (Phillips' theorem, see \cite{albiac-kalton2006}). We therefore keep the questions of bounded complementation, bicontractivity, $M$-summands, and $M$-ideals separate.

\noindent We do \emph{not} require an isometric direct sum: under the topological interpretation only a topological direct sum is required, and the isometry is provided by the two embeddings $\kappa_{k,\infty}$ and the inclusion of $\ker r_{2k}$. Whether, for every Banach space $X$ and every $k\ge1$, the projection $P_k = \kappa_{k,\infty} r_{2k}$ can be chosen to be an $M$-projection (and hence give an isometric direct sum) is answered in the \emph{negative} in \S\ref{sec:open}: for $X=c_{0}$ and $k=1$ no such $M$-projection exists (Proposition \ref{prop:no-M-projection}). Since only a topological direct sum is needed, this has no effect on the conclusions of the present section. The weaker question, whether for a given $X$ and $k$ one can choose a retraction $r_{2k}$ satisfying the two \emph{norm inequalities} $\|r_{2k}\|\le1$ and $\|\id_{\widetilde X}-\kappa_{k,\infty}r_{2k}\|\le1$ that occur in [U3, Theorem 2 (ii)], is a different matter and is not settled here.

\subsubsection{The range of validity of the chain of retracts}

Note that the original's assertion about the chain of retracts is made only for $k \in \N$, i.e.\ $k \ge 1$; its proof also says explicitly ``Let us exclude $j_0$ off $\widetilde X$''. For $k \ge 1$, the fact that ``$X^{(2k)}$ is a retract of $\widetilde X$'' depends on [U2, Theorem 1] and [U3, Lemma 2]; since neither of these uses [U3, Lemma 4] or [U3, Theorem 1 (i)]---[U3, Lemma 5] belongs to a different construction and is not the lemma invoked here---this part is not touched by the corrections of \S\ref{sec:cat} and can be retained (it is precisely this part that (e) of \S\ref{sec:cat} refers to). In addition, Proposition \ref{prop:tower-layer-retractions} now supplies an independent proof of the existence of these retractions for every $k\ge1$, so that the conclusion no longer relies on the preprints under review. As for $k = 0$, i.e.\ whether $X$ itself is a retract of $\widetilde X$, the original makes no assertion, and neither do we.

\subsubsection{Misprints in the three papers}

The following misprints do not affect any substantive conclusion; they are listed here only for completeness.

\begin{enumerate}
  \item[(1)] Both items of \textbf{Definition 1} of [U3] are numbered \emph{(i)} (the second should be (ii)); and in Lemma 1 of the same paper the target category is written as $(\iNF)_1$, $(\iNF)^1$ (according to the context it should be $(\NF)_1$, $(\NF)^1$, or a notation of the $\iBF$ type).
  \item[(2)] In the statement of \textbf{Theorem 6} of [U2], the right-hand side of ``$\|f_{n,k}\| = \|f_{n,k}\|$'' is evidently $\|f_n\|$.
  \item[(3)] In \textbf{the proof of Theorem 2 (iii)} of [U2] (p.~17) there appears ``$D(X/Z) \cong Z^9$'', which should be $Z^0$ (the annihilator of $Z$); moreover, in the statement on p.~15 the subscripts of the three annihilators $Z^0$, $D^{2n}(Z)^0$, $D^{2n-1}(X/Z)^0$ are written ambiguously (it is not stated at which level the ``$0$'' is taken), so care is needed when quoting.
  \item[(4)] In \textbf{the proof of Theorem 3} of [U3], in item (vi) the natural transformation is written as $\iota_{2k} : D^{2k} \rightsquigarrow \widetilde D$, whereas elsewhere in the text $j_{2k}$ is used for the canonical embedding; the two systems of symbols are easily confused.
\end{enumerate}

\paragraph{Hypotheses in the original.}
Unlike the misprints above, the next point is not a purely typographical slip but a matter of \emph{hypotheses}, and it is therefore recorded separately rather than included among the misprints. In \textbf{the proof of Lemma 4} of [U2], ``$\dim X = |X|$'' and ``$\dim X^* = |X^*|$'' are used interchangeably, but the two require different hypotheses. The former is the Halbeisen--Hungerb\"uhler formula and needs $X$ to be an \emph{infinite-dimensional Banach} space: it fails for an incomplete normed space (the space $c_{00}$ of finitely supported sequences has $\dim c_{00}=\aleph_0$ while $|c_{00}|=\cc$) and also in finite dimension (there $\dim X=n<\infty$ while $|X|=\cc$). The latter needs no completeness assumption on $X$, because $X^*$ is automatically a Banach space and is infinite-dimensional as soon as $X$ is; this distinction must be noted when quoting.

\subsubsection{Comparison of terminology and notation}

Remark \ref{rem:naming} promised, in this section, a comparison with the terminology used in the three preprints.

\begin{center}
\begin{tabular}{@{}p{0.40\textwidth}p{0.52\textwidth}@{}}
\toprule
This paper & The three preprints\\
\midrule
$X^{(2n)}$ ($2n$-th iterated dual) & $D^{2n}(X)$\\
adjacent embedding $J_{X^{(2n)}} : X^{(2n)} \to X^{(2n+2)}$ &
the canonical embedding $j_{2k} : D^{2k}(X) \to D^{2k+2}(X)$\\
tower map (structure map) $\kappa_{n,\infty} : X^{(2n)} \to \widetilde X$ &
the isometric embedding $i_{2k} : D^{2k}(X) \to \widetilde X$ into the ``hyperdual space''\\
tower space $\widetilde X$ (hyperdual space, completion taken) &
the ``hyperdual space'' $\widetilde X$ of [U3] (${\bigsqcup}/{\sim}$, no completion taken)\\
direct-limit functor $\Dir$ & the hyperdual functor $\widetilde D$ of [U3]\\
retraction $r$ ($r\,J = \id$, $\|r\| \le 1$) & $r_{2k}$\\
inverse tower space $\overleftarrow X = \varprojlim_n X^{(2n+1)}$ &
the inverse limit occurring in [U3, Corollary 2] (the original gives it no fixed name)\\
\bottomrule
\end{tabular}
\end{center}

\noindent Recall also that the \emph{reference numbering differs from paper to paper} in the three preprints, so it is very easy to mix them up when reading: in [U3], [U1] is cited as [15] and [U2] as [16], and there is \emph{no} [17]; [17] occurs only in the reference list of [U2] (where [U1] $=$ [17]). Hence ``Theorem 1 of [16]'' in the text of [U3] means Theorem 1 of [U2], while ``Theorem 2 (i) of [17]'' in the text of [U2] means Theorem 2 (i) of [U1]. One must match the numbers first when checking; otherwise one will mistake a conclusion of one paper for that of another.

\section{Two questions answered in the negative}\label{sec:open}

We collect the consequences established above: the exact dimension identities for the tower of $\ell^{\infty}$, the classification of finite quotient shape, and a geometric obstruction to $M$-projections onto canonical tower layers. The dimension identities follow from Lemma \ref{lem:linf-dual} and Remark \ref{rem:beth-tower}; the negative answer concerning finite quotient shape follows from Theorem \ref{thm:shape-classification}; and Proposition \ref{prop:no-M-projection} supplies the $M$-projection counterexample. Both questions about the objects of the theory are answered in the \emph{negative}, and like the rest of the paper these are \emph{purely mathematical} statements about the objects under discussion, which have nothing to do with the choices made in the three preprints.

\medskip
\noindent\emph{The dimension value.} The Hamel dimension of the second dual of $\ell^{\infty}$ is bounded above by $2^{2^{\cc}}$ (Proposition \ref{prop:linf-bidual}), and this bound is attained, so that the exact value---used throughout \S\ref{sec:dim}---is known:
\begin{equation}\label{eq:open-dim}
  \dim\ell^{\infty**}=2^{2^{\cc}},\qquad
  \dim(\ell^{\infty})^{(n)}=\beth_{n+1}\ \ (n\ge0),\qquad
  \dim\widetilde{\ell^{\infty}}=\beth_{\omega+1}.
\end{equation}
The middle identity is the two-step computation of Remark \ref{rem:beth-tower}, the first is the case $n=2$ of that identity, and the third says that completing the tower raises the Hamel dimension from $\beth_{\omega}$---the dimension of the incomplete union $\bigcup_k(\ell^{\infty})^{(2k)}$---to $\beth_{\omega+1}$. Hence the general lower bound
\[
  \dim\widetilde X\ \ge\ \sup_{n}\dim X^{(2n)}
  \qquad\text{(Proposition \ref{prop:dim-tower-lower})}
\]
\emph{can be strict}, as announced in Remark \ref{rem:dim-root}; for $X=\ell^{\infty}$ its right-hand side equals $\sup_n\beth_{2n+1}=\beth_\omega$, while the left-hand side is $\beth_{\omega+1}$. Consequently every dimension appearing in the tower of $\ell^{\infty}$ is determined, and the exact value $2^{2^{\cc}}$ is the one used in \S\ref{sec:dim}. For an arbitrary space $X$ the bound above is the general lower bound adopted in this paper; we do not list it as a separate numbered problem.

\begin{enumerate}
  \item[\textbf{(O1)}] \textbf{Is $\Sh_{\bar0}(\ell^{\infty})$ equal to the unique $l_p$-type $\Sh_{\bar0}(l_p)$?}
    \emph{Answer: no.} Since $\dim\ell^{\infty}=\dim l_p=\cc$, the assertion (H1) of [U1] (same dimension $\Rightarrow$ the same finite quotient shape) would give $\Sh_{\bar0}(\ell^{\infty})=\Sh_{\bar0}(l_p)$; yet the author of [U1] wrote in \cite[the text preceding Remark 3.6]{uglesic2018lp} that ``it seems that $\Sh_{\bar0}(l^\infty)$ might differ from (the unique) type $\Sh_{\bar0}(l_p)$'', which apparently conflicts with his own Theorem 2 (i). In fact the doubt is justified: by the classification criterion \eqref{eq:shape-criterion} the invariant of the finite quotient shape is $\dim X^{*}$ and not $\dim X$, and for $1\le p<\infty$
    \[
      \dim(\ell^{\infty})^{*}=2^{\cc}\ \ne\ \cc=\dim(l_{p})^{*},
    \]
    so that $\Sh_{\bar0}(\ell^{\infty})\ne\Sh_{\bar0}(l_{p})$ (Theorem \ref{thm:shape-classification}). This is statement (h) of \S\ref{sec:dim}, where the instance of (H1) for $\ell^{\infty}$ and $l_p$ is thereby decided in the negative. What \cite{uglesic2018lp} actually proves about $\ell^{\infty}$ is only $\Sh_{\aleph_0}(l^\infty)=\Sh_{\bar0}(l^\infty)=\Sh_{\bar0}(F^{N}_0(\infty)^{**})$, which is perfectly compatible with the negative answer and does not compare $\ell^{\infty}$ with the type of $l_p$. The mechanism of the answer is \emph{algebraic}: what was needed was an isomorphism invariant of the finite quotient shape, and it is supplied by $\dim X^{*}$.
  \item[\textbf{(O2)}] \textbf{For every Banach space $X$ and every $k\ge1$, can the projection $P = \kappa_{k,\infty} r_{2k}$ on $\widetilde X$ be chosen to be an \emph{$M$-projection}, that is, can one have}
    \[
      \|w\| = \max\bigl\{\|Pw\|,\ \|(\id-P)w\|\bigr\} \qquad (w \in \widetilde X),
    \]
    \textbf{where $r_{2k}\in(\BF)_1(\widetilde X, X^{(2k)})$ is a retraction of the structure map $\kappa_{k,\infty}$ (Definition \ref{def:tower-space}), i.e.\ $r_{2k}\kappa_{k,\infty}=\id$?}
    \emph{Answer: no.} Here $P=\kappa_{k,\infty}r_{2k}$---rather than $J_{X^{(2k)}}r_{2k}$---is the operator on $\widetilde X$ whose range is the canonical copy of $X^{(2k)}$; this more precise notation is necessary because the range of $r_{2k}$ is a layer of the tower, not the tower space, and $\kappa_{k,\infty}$ embeds that layer into $\widetilde X$. If $P$ could be chosen to be an $M$-projection, then the direct-sum decomposition $\widetilde X=\kappa_{k,\infty}(X^{(2k)})\oplus N(r_{2k})$ of \S\ref{sec:wording} would be an \emph{isometric} direct sum in the $\ell^{\infty}$ sense. The following counterexample shows that this is in general impossible.
\end{enumerate}

\begin{proposition}[No $M$-projection in general]\label{prop:no-M-projection}
There exist a Banach space $X$ and an index $k$ such that the tower space $\widetilde X$ admits no $M$-projection onto the canonical copy of $X^{(2k)}$: no bounded linear $Q:\widetilde X\to\widetilde X$ with $Q^{2}=Q$, $R(Q)=R\bigl(\kappa_{k,\infty}\bigr)$ and
\[
  \|w\|=\max\bigl\{\|Qw\|,\ \|(\id-Q)w\|\bigr\}\qquad(w\in\widetilde X)
\]
exists. The pair $X=c_{0}$, $k=1$ already has this property.
\end{proposition}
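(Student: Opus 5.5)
The plan is to reduce the question from the tower space to the single pair $\ell^{\infty}\subset\ell^{\infty**}$, and there to exploit the unit of the commutative $C^{*}$-algebra $\ell^{\infty**}$. Take $X=c_0$ and $k=1$, so that the layer in question is $X_1=c_0^{**}=\ell^{\infty}$ and $Y:=\kappa_{1,\infty}(\ell^{\infty})\subset\widetilde{c_0}$. Suppose, for a contradiction, that $Q$ is an $M$-projection on $\widetilde{c_0}$ with range $Y$.

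\emph{Step 1 (restriction to a larger layer).} Put $Z:=\kappa_{2,\infty}(X_2)$, the canonical copy of $X_2=\ell^{\infty**}$. Since $\kappa_{1,\infty}=\kappa_{2,\infty}\kappa_{1,2}$, we have $Y\subset Z$. For $z\in Z$ we get $Qz\in Y\subset Z$, hence also $(\id-Q)z\in Z$. Thus $Q|_Z$ is a projection of $Z$ onto $Y$, and it still satisfies $\|z\|=\max\{\|Qz\|,\|(\id-Q)z\|\}$.

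Transport this through the isometry $\kappa_{2,\infty}$. The result is an $M$-projection $Q'$ on $\ell^{\infty**}$ whose range is exactly the canonical image $J_{\ell^{\infty}}(\ell^{\infty})$. This step is routine.

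\emph{Step 2 (the unit is incompatible with a nontrivial $M$-summand).} As in Remark \ref{rem:CK}, identify $\ell^{\infty**}\equiv C(K')$ isometrically. Under this identification $J_{\ell^{\infty}}$ is a unital $*$-homomorphism, so $e:=J_{\ell^{\infty}}(\mathbf 1)$ is the constant function $1$. Because $e$ lies in the range of $Q'$, every $w\in\ker Q'$ and every scalar $t$ satisfy
\[
\|e+tw\|=\max\{1,|t|\,\|w\|\}.
\]
Now suppose $w\ne0$. Pick $s\in K'$ with $|w(s)|=\|w\|$, and choose $t=\varepsilon\,\overline{w(s)}/|w(s)|$ with $0<\varepsilon<1/\|w\|$. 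Then
\[
\|e+tw\|\ge|1+tw(s)|=1+\varepsilon\|w\|>1=\max\{1,|t|\,\|w\|\},
\]
which is a contradiction. Hence $\ker Q'=\{0\}$ and $Q'=\id$. It follows that $J_{\ell^{\infty}}(\ell^{\infty})=\ell^{\infty**}$, i.e. $\ell^{\infty}$ is reflexive. This contradicts Example \ref{ex:c0} and Theorem \ref{thm:chain}.

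\emph{Main obstacle.} The only delicate point is the justification that the canonical embedding $\ell^{\infty}\to\ell^{\infty**}$ sends $\mathbf 1$ to a unit of sup-norm type, that is, to the constant function $1$ in the $C(K')$-model. I would quote Dixmier's correspondence together with the statement in Remark \ref{rem:CK} that the canonical embedding of $C(K)$ into its bidual is a unital $*$-homomorphism.

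Alternatively, one can avoid function representations altogether. It suffices to know that $\ell^{\infty**}$, with the Arens product, is a commutative unital $C^{*}$-algebra with unit $J(\mathbf 1)$; Gelfand theory then yields the sup-norm model used in Step 2. In the real case the same argument works with $t=\pm\varepsilon$.
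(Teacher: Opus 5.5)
Your argument is correct. Step 1 is exactly the paper's reduction: restrict $Q$ to $\kappa_{2,\infty}(X^{(4)})$ and pull it back along $\kappa_{2,\infty}$ to an $M$-projection on $\ell^{\infty**}$ with range $J_{\ell^{\infty}}(\ell^{\infty})$. Step 2 runs on the same mechanism as the paper. The $M$-identity forces $\|e+tz\|=1$ for $e=J_{\ell^{\infty}}(\mathbf 1)$ and a small multiple $tz$ of a nonzero kernel vector. On the other hand, a norm-one functional at which $e$ takes the value $1$ and $tz$ a positive value forces $\|e+tz\|>1$.

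The only real difference is where that functional comes from. You use a point of the Gelfand spectrum $K'$. This requires the $C^{*}$-algebra model of the bidual (Arens product, Dixmier/Gelfand) with $J_{\ell^{\infty}}$ unital, and the functionals you evaluate at are characters of $\ell^{\infty**}$, i.e.\ elements of $(\ell^{\infty})^{***}$. The paper instead uses a state from the predual. Positive finitely additive measures span $\ba(\N)=(\ell^{\infty})^{*}$, so a nonzero $z$ is nonzero on some positive $\mu$ with $\mu(\N)=\|\mu\|=1$, and then $e(\mu)=\mu(\N)=1$ is automatic.

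Your route packages the contradiction as the familiar fact that the unit of $C(K')$ lies in no proper $M$-summand, at the cost of importing the algebra structure of the bidual. The paper's route needs no representation of $\ell^{\infty**}$ and stays within elementary facts about $\ba(\N)$. Your ``delicate point'' is also milder than you suggest: neither the peak point nor $e=1$ exactly is essential. Any $s$ with $w(s)\ne0$ and any model in which $J_{\ell^{\infty}}(\mathbf 1)$ is merely unimodular would do, after adjusting the phase of $t$.
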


\begin{proof}
Take $X=c_0$ and $k=1$. Put $E=X^{(2)}=\ell^\infty$ and $F=X^{(4)}=E^{**}$. The tower structure maps satisfy
\[
  \kappa_{2,\infty}J_E=\kappa_{1,\infty}.
\]
Suppose that $Q$ is an $M$-projection on $\widetilde X$ with range $\kappa_{1,\infty}(E)$. Since this range is contained in $\kappa_{2,\infty}(F)$, the formula
\[
  P=\kappa_{2,\infty}^{-1}\,Q\,\kappa_{2,\infty}:F\longrightarrow F
\]
is well defined, where the inverse is taken on $\kappa_{2,\infty}(F)$. Then $P^2=P$, $R(P)=J_E(E)$, and the isometry of $\kappa_{2,\infty}$ gives
\[
  \|w\|=\max\{\|Pw\|,\ \|w-Pw\|\}\qquad(w\in F).
\]
Thus $P$ is an $M$-projection on $F$.

The space $E=\ell^\infty$ is non-reflexive, so $J_E(E)$ is a proper subspace of $F$. Choose $0\ne z\in\Ker P$. Positive finitely additive measures linearly span $\ba(\N)=E^*$, so there is a positive $\mu\in\ba(\N)$ with $z(\mu)\ne0$. After normalising $\mu$ and multiplying $z$ by a scalar of modulus one, arrange that $\|\mu\|=\mu(\N)=1$ and $z(\mu)>0$. Choose $t>0$ with $t\|z\|<1$, and let $e=J_E(\mathbf 1)$. Then $Pe=e$, $Pz=0$ and $\|e\|=1$. The $M$-projection identity gives
\[
  \|e+tz\|=\max\{1,\ t\|z\|\}=1 .
\]
On the other hand, evaluation at the norm-one functional $\mu$ gives
\[
  \|e+tz\|\ \ge\ \bigl|(e+tz)(\mu)\bigr|=1+tz(\mu)>1 ,
\]
a contradiction. Hence the required $M$-projection does not exist.
\end{proof}

\noindent Two remarks on the scope of Proposition \ref{prop:no-M-projection}. First, the counterexample is directed at the \emph{$M$-projection property} only, and its reach is exactly what the proposition states: for $X=c_{0}$ and $k=1$ no retraction $r_{2}$ can be accompanied by an $M$-projection onto the canonical copy of $X^{(2)}$. Whether, for a given $X$ and $k$, one can choose \emph{some} retraction $r_{2k}$ satisfying the \emph{norm inequalities} $\|r_{2k}\|\le1$ and $\|\id_{\widetilde X}-\kappa_{k,\infty}r_{2k}\|\le1$ that occur in [U3, Theorem 2 (ii)] is a weaker and different question---it asks for a bicontractive projection, not for an $M$-projection---and it is not settled here. Under the topological interpretation adopted in \S\ref{sec:wording}, however, neither question is needed: only a \emph{topological} direct sum is required there, and this is provided unconditionally by Proposition \ref{prop:tower-layer-retractions}.

Second, the negative answer explains why the $M$-ideal picture cannot replace the $M$-projection property. An $M$-ideal is characterized on the \emph{dual} side and yields an $L$-summand decomposition there, not a direct-sum decomposition of the original space; no universal assertion that $X^{(2k)}$ is an $M$-ideal in $X^{(2k+2)}$ is used or claimed. The standard instance $X=c_{0}$ illustrates the gap: $c_{0}$ is an $M$-ideal of $\ell^{\infty}$, yet $c_{0}$ is uncomplemented in $\ell^{\infty}$ (Phillips' theorem), so it is certainly not an $M$-summand. Thus neither the norm inequalities nor the $M$-ideal property delivers an isometric decomposition, in agreement with the counterexample above.

\noindent To sum up: the two questions above are answered by quite different mechanisms---(O1) by an \emph{algebraic} invariant of the finite quotient shape, (O2) by a \emph{geometric} argument on the $\ell^{\infty}$-side of the tower (a norm-one projection need not be an $M$-projection). Both answers are negative, and the exact dimensions of \eqref{eq:open-dim} complete the picture of \S\ref{sec:dim}.

%=====================================================================
\section*{Acknowledgements}
\addcontentsline{toc}{section}{Acknowledgements}
The first author thanks the members of the QQ group ``八云的魔法书交流群'' (group number: 1080353942) for taking part in discussions of some of the questions in this paper and for checking parts of its content, in particular (in no particular order; the nicknames shown are those used at the time of writing): f1; 群莉莉白，时不时让群友发春；群幽香，时不时来群里抓群友给太阳花施肥；学代数学的；SuppleMarisa；Ludwig II Skłodowsk-Hearn.
The first author also thanks N.~Ugle\v{s}i\'c for candidly accepting the errors pointed out in his earlier papers.

It should be noted that the role of the second author in the preparation and writing of this paper was closer to that of a consultant, or advisor, than to that of a co-author; out of respect for him, and for other reasons as well, he is nevertheless listed in the second position on the byline.

%=====================================================================

\end{document}